\DeclareMathOperator{\rbiprod}{{\cdot\kern-.33em\triangleright\kern-.43em<}}
\DeclareMathOperator{\lbiprod}{{>\!\!\!\triangleleft\kern-.33em\cdot}}
\newtheoremstyle{mytheorem}%
{10.0pt plus 2.0pt minus 2.0pt} 
{10.0pt plus 2.0pt minus 2.0pt} 
{\itshape} 
{} 
{\bfseries} 
{.} 
{ } 
{} 
\newtheoremstyle{mydefinition}%
{10.0pt plus 2.0pt minus 2.0pt} 
{10.0pt plus 2.0pt minus 2.0pt} 
{} 
{} 
{\bfseries} 
{.} 
{ } 
{} 
\newtheoremstyle{myremark}%
{10.0pt plus 2.0pt minus 2.0pt} 
{10.0pt plus 2.0pt minus 2.0pt} 
{} 
{} 
{\itshape} 
{.} 
{ } 
{} 
\theoremstyle{mytheorem}
\newtheorem{theorem}{Theorem}[section]
\newtheorem{lemma}[theorem]{Lemma}
\newtheorem{corollary}[theorem]{Corollary}
\newtheorem{proposition}[theorem]{Proposition} 
\theoremstyle{myremark}
\newtheorem{remark}[theorem]{Remark}
\theoremstyle{mydefinition}
\newtheorem{definition}[theorem]{Definition}
\newtheorem{counterexample}[theorem]{Counterexample}
\newtheorem{problem}[theorem]{Problem}
\newtheoremstyle{myzusatz}
 {10.0pt plus 2.0pt minus 2.0pt} 
{10.0pt plus 2.0pt minus 2.0pt} 
{\itshape} 
{} 
{\bfseries} 
{.} 
{ } 
{\thmname{#1}\thmnumber{ #2}\thmnote{ #3}}
\theoremstyle{myzusatz}
\definecolor{gray1}{gray}{0.8}
\definecolor{gray2}{gray}{0.6}
\definecolor{gray3}{gray}{0.4}
\definecolor{gray4}{gray}{0.2}
\newcommand{\id}{\mathrm{Id}}
\newcommand{\Mm}{\mathcal{M}}
\newcommand{\mm}{\mathfrak{M}}
\newcommand{\Rr}{\mathcal{R}}
\newcommand{\ot}{\otimes}
\renewcommand{\iff}{\Leftrightarrow}
\def\vect{{\mathfrak{M}}}
\newcommand{\C}{\mathbb{C}}
\newcommand{\op}{\mathrm{op}}
\newcommand{\One}{\mathbbm{1}}
\begin{document}
\hyphenation{sa-cri-fi-cing}
\hyphenation{pro-duct}
\hyphenation{par-ti-cu-lar}
\title{Matched pairs and Yetter--Drinfeld braces}
\author{Davide Ferri, Andrea Sciandra}
\begin{abstract}\noindent
    It is proven that a matched pair of actions on a Hopf algebra $H$ is equivalent to the datum of a Yetter--Drinfeld brace, which is a novel structure generalising Hopf braces. This improves a theorem by Angiono, Galindo and Vendramin, originally stated for cocommutative Hopf braces. These Yetter--Drinfeld braces produce Hopf algebras in the category of Yetter--Drinfeld modules over $H$, through an operation that generalises Majid's transmutation. A characterisation of Yetter--Drinfeld braces via 1-cocycles, in analogy to the one for Hopf braces, is given.

    Every coquasitriangular Hopf algebra $H$ will be seen to yield a Yetter--Drinfeld brace, where the additional structure on $H$ is given by the transmutation. We compute explicit examples of Yetter--Drinfeld braces on the Sweedler's Hopf algebra, on the algebras $E(n)$, on $\mathrm{SL}_{q}(2)$, and an example in the class of Suzuki algebras.
\end{abstract}
\address{%
\parbox[b]{0.9\linewidth}{University of Turin, Department of Mathematics ``G.\@ Peano'',\\ via
 Carlo Alberto 10, 10123 Torino, Italy.\\
 Vrije Universiteit Brussel, Department of Mathematics and Data Science,\\ Pleinlaan 2, 1050, Brussels, Belgium.}}
 \email{d.ferri@unito.it, Davide.Ferri@vub.be}
\address{%
\parbox[b]{0.9\linewidth}{University of Turin, Department of Mathematics ``G.\@ Peano'',\\ via
 Carlo Alberto 10, 10123 Torino, Italy.}}
 \email{andrea.sciandra@unito.it}
 \keywords{Yetter--Drinfeld Braces, Matched Pairs, Hopf Braces, Quasitriangular Hopf Algebras, Yetter--Drinfeld Modules, Braiding Operators}
\subjclass[2020]{Primary 16T25; Secondary 18M15}
\maketitle
\tableofcontents 
\section{Introduction}\noindent The notion of braiding is ubiquitous in Mathematics, encompassing low-dimension topology, group theory, category theory, and the theory of the Yang--Baxter equation (YBE). In particular, notions of braided categories \cite{joyal1993braided}, braided groups \cite{LYZ,Majid_braidedgroups, majid1995algebras}, and (co)quasitriangular Hopf algebras \cite{doi:10.1080/00927879308824649, drinfeld1989quasi, larson1991two,MJbraided,Sch} have been defined around the 1990s, and extensively used ever since.

On the other hand, set-theoretic solutions to the YBE have been studied on suggestion of Drinfeld \cite{drinfeld2006some}, growing into a vibrant field of research. It is known that braided groups provide solutions to the set-theoretic YBE, and solutions to the YBE are associated with a structure group \cite{etingof1999set} which turns out to be a braided group. Lu, Yan and Zhu \cite{LYZ} clarified that a braided group is essentially the same as a matched pair of actions. This is equivalent to bijective 1-cocycles. Afterwards, Guarnieri and Vendramin \cite{guarnieri2017skew} proved that this is in turn equivalent to a skew brace structure: a notion that has become of capital importance, in its several generalisations (a non-exhaustive list of them being provided by \cite{brzezinski2019trusses,MARTINLYONS2024751, matsu}). Skew braces generalise Rump's braces \cite{rump2007braces}, and the interplay between skew braces and Yang--Baxter maps generalises the correspondence between braces and involutive Yang--Baxter maps \cite{GATEVAIVANOVA2018649}. 

A skew brace is the datum of two group structures $(G,\cdot)$ and $(G,\bullet)$ on the same set $G$, satisfying a compatibility. An immediate Hopf-theoretic generalisation is the notion of Hopf brace, which is the datum of two Hopf algebra structures $(H,\cdot,1,\Delta, \epsilon, S)$ and $(H,\bullet, 1,\Delta, \epsilon, T)$ on the same coalgebra $(H,\Delta, \epsilon)$, satisfying an analogous compatibility \cite{angiono2017hopf}. The correspondence between skew braces, matched pairs of actions, and bijective 1-cocycles generalises naturally to cocommutative Hopf braces, cocommutative matched pairs on a Hopf algebra, and bijective 1-cocycles of cocommutative Hopf algebras. The correspondence between Hopf braces and bijective 1-cocycles is true without the assumption of cocommutativity---in fact, the two structures are strongly related, and the same equivalence holds true in any braided monoidal category \cite{eq1cocycleHopfbrace}. However, the assumption of cocommutativity is necessary for the equivalence with matched pairs; which is, in turn, crucial in order to bridge Hopf braces with the world of braiding operators. Thus, cocommutativity seemed hard to relax, without sacrificing the backbone of the correspondence theorem: this is the premise to our investigation.

In this paper, we get rid of the cocommutativity hypothesis. As in \cite{angiono2017hopf}, we consider matched pairs of Hopf algebras, satisfying a braided-commutativity condition: we call them \emph{matched pairs of actions on a Hopf algebra} (the name being reminiscent of matched pairs of actions on a group, extensively used by Lu, Yan and Zhu \cite{LYZ} with the name \emph{compatible actions}). Then, in {\S}$\,$\ref{section:constructing-hopf-in-YD}, we establish a correspondence between such matched pairs of actions, and Yetter--Drinfeld braces. A Yetter--Drinfeld brace is the datum of a Hopf algebra $H^\bullet = (H,\bullet, 1, \Delta, \epsilon, T)$, and a Hopf algebra $(H,\cdot,1, \Delta,\epsilon,S)$ in $^{H^\bullet}_{H^\bullet}\mathcal{YD}$ (which we call the transmutation of $H^\bullet$); where the two structures satisfy some technical hypotheses, and the same compatibility as Hopf braces. It comes out that the notions of Yetter--Drinfeld brace and of Hopf brace coincide in the cocommutative case.

It is natural to ask whether Yetter--Drinfeld braces can be interpreted as 1-cocycles as well. This is positively answered in {\S}$\,$\ref{section:1-cocycles}.

In {\S}$\,$\ref{section:coquasi}, we present a vast class of Yetter--Drinfeld braces that are, generally, not Hopf braces. Given any coquasitriangular Hopf algebra $(H,\bullet,1,\Delta,\epsilon, T,\Rr)$, we shall be able to construct a second operation $\cdot$ and a linear map $S\colon H\to H$, yielding a Yetter--Drinfeld brace. In this case, the additional structure on $H$ coincides with the transmutation of $H^\bullet$, introduced by Majid \cite{MJbraided, Maj2}. Therefore, we obtain in particular an interpretation of the theory of transmutation, within the frame of Yetter--Drinfeld braces. The map $\sigma\colon H\ot H\to H\ot H$, $\sigma(a\ot b):= \Rr^{-1}(a_1\ot b_1)b_2\ot a_2\Rr(a_3\ot b_3)$ is a braiding operator on $H$. When $(H^\bullet, \Rr)$ is cotriangular, the braiding operator $\sigma$ is involutive, and the corresponding algebra $(H,\cdot,1, \Delta, \epsilon, S)$ in $\mathrm{Hopf}({}^{H^\bullet}_{H^\bullet}\mathcal{YD})$ is braided-commutative with respect to the braiding of $^{H^\bullet}_{H^\bullet}\mathcal{YD}$.

This entire class of Yetter--Drinfeld braces descending from coquasitriangular Hopf algebras is, in some way, ``orthogonal'' to the class of cocommutative Hopf braces. When $(H^\bullet,\Rr)$ is a cocommutative coquasitriangular Hopf algebra, the corresponding Yetter--Drinfeld brace is indeed the trivial Hopf brace $(H,\bullet,\bullet)$.

In conclusion to this paper, {\S}$\,$\ref{section:example} presents several examples of coquasitriangular Hopf algebras, and describes their associated Yetter--Drinfeld braces.

\vskip 10pt

\noindent\textbf{Notations and conventions.} We shall denote by $(\Mm,\ot,\One,\sigma)$ a braided monoidal category, with monoidal product $\ot$, unit object $\One$, and braiding $\sigma$. Comforted by the MacLane Coherence Theorem, we shall consistently be sloppy on associativity and unit constraints. In the rest of this paper, we adopt the notation $fg$ for the composition of maps $f\circ g$. The categories of algebras, coalgebras, bialgebras and Hopf algebras in $\Mm$ will be denoted by $\mathrm{Mon}(\Mm)$, $\mathrm{Comon}(\Mm)$, $\mathrm{Bimon}(\Mm)$ and $\mathrm{Hopf}(\Mm)$, respectively. We shall usually denote by $H$ a bialgebra in $\Mm$, by ${}_H\!\Mm$, $\Mm_H$, ${}^H\!\Mm$, $\Mm^H$, ${}_H\!\Mm_H$ and ${}^H\!\Mm^H$ its category of left and right modules, left and right comodules, bimodules, and bicomodules, respectively. Unless otherwise specified, our setting will be the category $\vect=\mathsf{Vec}_\Bbbk$ of vector spaces over $\Bbbk$, with braiding given by the canonical flip $\tau$.

Symbols such as $\bullet,\cdot$ will usually signify the multiplication of an algebra $A$. Equivalently, the multiplication will be denoted by a map $m$, $m_\cdot$ or $m_\bullet\colon A\ot A\to A$. The unit is denoted by $1$ or, equivalently, by a map $u\colon \Bbbk \to A$. The comultiplication and the counit of a coalgebra will be denotedy by $\Delta$ and $\epsilon$, respectively. Subscripts will be added for clarity, whenever needed.

In our computations with coalgebras, we shall consistently employ \emph{Sweedler's notation}  $\Delta(a)=\sum_{i}a_1^i\ot a_2^i = a_1\ot a_2$, omitting the summation. A similar notation is adopted for left coactions $a\mapsto a_{-1}\ot a_0$, and for right coactions $a\mapsto a_0\ot a_1$.
\vskip 10pt

\section{Preliminaries}\label{section:prelim}\noindent
In this section, we introduce the main notions that we are going to use throughout the paper. Namely, we describe \emph{matched pairs of actions} on a bialgebra, and we survey the fundamentals on Hopf braces.
\subsection{Matched pairs of actions} 
Matched pairs of groups, enabling to define a Zappa--Szép product, have been known for a long time. On the other hand, the notion of a matched pair of Hopf algebras was introduced by Singer \cite{Singer} in the graded case, 
and the current notion was 
introduced by Majid \cite{majid1990physics}. We consider \emph{matched pairs of actions} on Hopf algebras, which will turn out to be a subclass of matched pairs of Hopf algebras \cite[Definition 7.2.1]{Majid-book}. Matched pairs of actions on groups have been already used, e.g. in the seminal work of Lu, Yan and Zhu \cite{LYZ}, and then mutuated in several different contexts, such as for groupoids by Andruskiewitsch \cite{andruskiewitsch2005quiver}. 
\begin{definition}\label{def:MPA}
    Let $H$ be a bialgebra, $\bullet$ be its algebra product. A \emph{matched pair of actions} $(H,\rightharpoonup,\leftharpoonup)$ on $H$ is the datum of a left action $\rightharpoonup$ and a right action $\leftharpoonup$ of $H$ on itself, such that $H$ is a left $H$-module coalgebra and a right $H$-module coalgebra with the respective actions, and the following conditions hold for all $a,b,c\in H$:
    \begin{align}
        \label{matchedpair-1}\tag{\sc mp.1}& a\rightharpoonup 1 = \epsilon(a)1,\text{ i.e.\@ }u_H\text{ is a morphism in }{}_H\vect;\\
        \label{matchedpair-2}\tag{\sc mp.2}&1\leftharpoonup a = \epsilon(a)1,\text{ i.e.\@ }u_H\text{ is a morphism in }{}\vect_H;\\
        \label{matchedpair-3}\tag{\sc mp.3}& a\rightharpoonup(b\bullet c)=(a_{1}\rightharpoonup b_{1})\bullet ((a_{2}\leftharpoonup b_{2})\rightharpoonup c);\\
        \label{matchedpair-4}\tag{\sc mp.4}& (a\bullet b)\leftharpoonup c=(a\leftharpoonup(b_{1}\rightharpoonup c_{1}))\bullet (b_{2}\leftharpoonup c_{2});\\
        &\label{braided-commutativity}\tag{$\star$} a\bullet b = (a_1\rightharpoonup b_1)\bullet (a_2\leftharpoonup b_2).
    \end{align}
    A \emph{morphism of matched pairs of actions} between $(H,\rightharpoonup_H, \leftharpoonup_H)$ and $(K,\rightharpoonup_K, \leftharpoonup_K)$ is a morphism of bialgebras $H\to K$ that intertwines the two left actions and the two right actions, respectively, in the following sense:
    \[f(a\rightharpoonup_H b) = f(a)\rightharpoonup_K f(b),\quad f(a\leftharpoonup_H b) = f(a)\leftharpoonup_K f(b). \] The category of matched pairs of actions (in $\mathsf{Vec}_\Bbbk$) will be denoted by $\mathrm{MP}(\mathsf{Vec}_\Bbbk)$.
\end{definition}


A pair of actions $(\rightharpoonup,\leftharpoonup)$ on a bialgebra $H$ satisfying \eqref{matchedpair-1}--\eqref{matchedpair-4} allows one to construct the \emph{double cross product} $H \bowtie H$, as described in Majid \cite{Majid-book}, which is the vector space $H\ot H$ endowed with the algebra structure given by the product \[(a\ot h)(b\ot g):=a\bullet (h_{1}\rightharpoonup b_{1})\ot(h_{2}\leftharpoonup b_{2})\bullet g,\] with unit $1_{H}\ot1_{H}$, and the usual tensor product coalgebra structure. Then $H\bowtie H$ becomes a bialgebra if and only if the additional condition
    \begin{equation}
        \label{matchedpair5}\tag{\sc mp.5} (a_1\rightharpoonup b_1)\ot (a_2\leftharpoonup b_2) = (a_2\rightharpoonup b_2)\ot (a_1\leftharpoonup b_1)
    \end{equation}
    holds; and in this case, $(H,H)$ is a \emph{matched pair of bialgebras} in the sense of Majid \cite[Definition 7.2.1]{Majid-book}. Notice that \eqref{matchedpair5} is trivially satisfied in case $H$ is cocommutative.

        When $H$ is a Hopf algebra, a matched pair of bialgebras $(H,H)$ will be usually called a \emph{matched pair} on $H$.    
\begin{remark}\label{rem:trivialactions}
        For any bialgebra $H$, the actions $a\rightharpoonup b := \epsilon(a)b$, $a\leftharpoonup b := a\,\epsilon (b)$ always satisfy \eqref{matchedpair-1}--\eqref{matchedpair5}. They moreover satisfy \eqref{braided-commutativity} if and only if $\bullet$ is commutative. Notice that $\eqref{braided-commutativity}$ is equivalent to asking that $m_\bullet \colon H\bowtie H\to H$ is a morphism of algebras. 
        \end{remark}
The following result simply follows by an argument of naturality.

\begin{lemma}\label{lem:mhex}
    Let $\Mm$ be a (strict) braided monoidal category with braiding $\sigma$, let $H$ be an object in $\Mm$, and $m\colon H\ot H\to H$, $u\colon \One\to H$ any morphisms in $\mathcal{M}$. Then, the following hold:
    \begin{align}
        \label{braided1}\tag{\sc br.1}&\sigma_{H,H}(\id\ot u) = u\ot \id;\\
        \label{braided2}\tag{\sc br.2}&\sigma_{H,H}(u\ot \id) = \id\ot u;\\
        \label{braided3}\tag{\sc br.3}&\sigma_{H,H} m_{23} =m_{12}(\sigma_{H,H})_{23}(\sigma_{H,H})_{12};\\
        \label{braided4}\tag{\sc br.4}&\sigma_{H,H} m_{12} =m_{23}(\sigma_{H,H})_{12}(\sigma_{H,H})_{23}.
    \end{align}
Moreover, if $m$ is a monomorphism in $\mathcal{M}$ and $\varsigma\colon H\ot H\to H\ot H$ is a morphism satisfying \eqref{braided1}--\eqref{braided4}, then $\varsigma$ satisfies the hexagonal axioms, and hence $(H,\varsigma)$ is a braided object in $\Mm$.
\end{lemma}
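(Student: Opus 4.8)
The plan is to prove the two assertions separately: the four identities follow formally from the coherence of the ambient braiding $\sigma$, while the converse uses the monomorphism hypothesis to upgrade the ``$m$-tested'' identities to the genuine braid relation.

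For the first part I would read each identity as naturality of $\sigma$ against one hexagon axiom. Writing naturality as $\sigma_{X',Y'}(f\ot g)=(g\ot f)\,\sigma_{X,Y}$: taking $f=\id_H$, $g=u$ and using $\sigma_{H,\One}=\id_H$ gives
\[\sigma_{H,H}(\id_H\ot u)=(u\ot\id_H)\,\sigma_{H,\One}=u\ot\id_H,\]
which is \eqref{braided1}, and the symmetric choice $(u,\id_H)$ with $\sigma_{\One,H}=\id_H$ gives \eqref{braided2}. Taking $f=\id_H$, $g=m$ gives $\sigma_{H,H}(\id_H\ot m)=(m\ot\id_H)\,\sigma_{H,H\ot H}$; expanding $\sigma_{H,H\ot H}=(\sigma_{H,H})_{23}(\sigma_{H,H})_{12}$ by the hexagon and rewriting $\id_H\ot m=m_{23}$, $m\ot\id_H=m_{12}$ yields \eqref{braided3}. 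Symmetrically, $f=m$, $g=\id_H$ together with $\sigma_{H\ot H,H}=(\sigma_{H,H})_{12}(\sigma_{H,H})_{23}$ yields \eqref{braided4}. As claimed, nothing is used about $m,u$ beyond their being morphisms.

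For the converse, the object to be proved is that $\varsigma$ satisfies the braid (hexagonal) relation $\varsigma_{12}\varsigma_{23}\varsigma_{12}=\varsigma_{23}\varsigma_{12}\varsigma_{23}$ on $H^{\ot 3}$, the unit compatibilities already being assumed. The key remark is that \eqref{braided3}--\eqref{braided4} are exactly the statement that $\varsigma$ threads through $m$ as a braiding does; so I would try to reproduce the classical derivation of the braid relation from the hexagon identities. The one ingredient that derivation needs and that an arbitrary $\varsigma$ lacks is the naturality of $\varsigma$ with respect to $\varsigma$ itself, and the monomorphism hypothesis on $m$ is precisely the device meant to supply it: the plan is to express a suitable composite of $\varsigma$'s with one or two copies of $m$ in two different ways, collapse the $m$-part using associativity, and then cancel the surviving, \emph{post-composed} multiplication because it is left-cancellable.

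The hard part is exactly this second step, and I expect two obstacles. First, the cancellation is only legitimate if $m$ ends up post-composed to the braid words (monicity gives left-cancellation, not right-cancellation), which constrains the order in which \eqref{braided3} and \eqref{braided4} may be applied and demands careful leg bookkeeping on $H^{\ot 3}$ and $H^{\ot 4}$. Second, and more seriously, one must choose the composite so that the two expansions genuinely differ by the full braid relation: the most naive candidate, braiding one product past another, only reproduces the coherence/consistency identity (the two braid words coincide after commuting disjoint crossings), so the reductions have to be routed asymmetrically, and it is here that the associativity of $m$ must enter in an essential way before $m$ is cancelled. Once the braid relation is established, the conclusion that $(H,\varsigma)$ is a braided object is immediate.
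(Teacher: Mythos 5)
Your first half is correct and is exactly the paper's argument (the paper dispatches the whole lemma with the single remark that it ``follows by an argument of naturality''): \eqref{braided1}--\eqref{braided2} are naturality of $\sigma$ against $u$ together with $\sigma_{H,\One}=\id_H=\sigma_{\One,H}$, and \eqref{braided3}--\eqref{braided4} are naturality against $m$ combined with the hexagon decompositions $\sigma_{H,H\ot H}=(\sigma_{H,H})_{23}(\sigma_{H,H})_{12}$ and $\sigma_{H\ot H,H}=(\sigma_{H,H})_{12}(\sigma_{H,H})_{23}$, with no property of $m,u$ used, as you say.

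The converse, however, you have not proved: what you offer is a plan whose pivotal step --- exhibiting a composite whose two reductions differ by the braid relation --- is never produced, and your own observation about the ``naive candidate'' already shows why it cannot be produced by the route you describe. Read \eqref{braided3}--\eqref{braided4} as rewriting rules replacing $\varsigma m_{23}$ by $m_{12}\varsigma_{23}\varsigma_{12}$ and $\varsigma m_{12}$ by $m_{23}\varsigma_{12}\varsigma_{23}$. Every overlap of these rules resolves using only the distant commutation $\varsigma_{12}\varsigma_{34}=\varsigma_{34}\varsigma_{12}$; for instance the two expansions of $\varsigma(m\ot m)$ are $(m\ot m)\,\varsigma_{23}\varsigma_{12}\varsigma_{34}\varsigma_{23}$ and $(m\ot m)\,\varsigma_{23}\varsigma_{34}\varsigma_{12}\varsigma_{23}$, which coincide outright. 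So the system is confluent without the braid relation ever being invoked, and no double expansion of a word in $m$'s and $\varsigma$'s, however asymmetrically routed, can output it. This is not merely a heuristic: take $H=\Bbbk M(X)$ the algebra of a free magma, $m$ its product (which \emph{is} injective, by uniqueness of decomposition in a free magma), and $u=0$, so that \eqref{braided1}--\eqref{braided2} hold vacuously; defining $\varsigma$ recursively by \eqref{braided3}--\eqref{braided4} from an arbitrary operator $t$ on $\Bbbk X\ot\Bbbk X$ is consistent precisely because of the confluence just described, and if $t$ violates the braid relation then so does $\varsigma$. Hence \eqref{braided3}--\eqref{braided4} together with monicity of $m$ cannot suffice: any correct argument must use \eqref{braided1}--\eqref{braided2} and a genuine interplay between $u$ and $m$ (as is available in the paper's applications, where $(H,m,u)$ underlies a bialgebra) --- the very data your sketch sets aside as ``already being assumed''.

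Two further specific defects would sink the plan even where it is concrete. Associativity of $m$, which you say ``must enter in an essential way'', is not among the hypotheses: $m$ is an arbitrary monomorphism (and the magma example above is deliberately non-associative). And the morphism that survives to be cancelled in your scheme is $m_{12}=m\ot\id$ or $m_{23}=\id\ot m$, not $m$ itself: monicity of $m$ does not imply monicity of $m\ot\id$ in a general braided monoidal category, since $-\ot H$ need not preserve monomorphisms. For comparison, the paper supplies no argument for this half either --- its naturality remark covers only the four identities --- so your instinct that real work is needed here is sound; but the work is not done, and with the toolkit you allot yourself it cannot be done.
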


\begin{lemma}[{cf.\@ Tambara \cite[Proposition 2.2]{tambara1990coendomorphism}}]\label{lemma:matchedpair-iff-braiding}
    Let $H$ be a bialgebra. Suppose given a left action $\rightharpoonup:H\otimes H\to H$ and a right action $\leftharpoonup:H\otimes H\to H$, such that $H$ is a left $H$-module coalgebra and a right $H$-module coalgebra with the respective actions. Define $\sigma_{H,H}\colon H\ot H\to H\ot H$ by
\[
\sigma_{H,H}(a\otimes b):=(a_{1}\rightharpoonup b_{1})\otimes(a_{2}\leftharpoonup b_{2}).
\]
Then, $\sigma_{H,H}$ satisfies \eqref{braided1}--\eqref{braided4} if and only if $\rightharpoonup$ and $\leftharpoonup$ satisfy \eqref{matchedpair-1}--\eqref{matchedpair-4}.
\end{lemma}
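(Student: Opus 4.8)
The plan is to match the four braided conditions with the four matched-pair conditions one at a time, reading \eqref{braided1}--\eqref{braided4} with $m=m_\bullet$ and $u=u_H$. Throughout I would use that, since $H$ is a left (resp.\ right) $H$-module coalgebra, the map $\rightharpoonup$ (resp.\ $\leftharpoonup$) is a morphism of coalgebras; in particular $\epsilon(a\rightharpoonup b)=\epsilon(a)\epsilon(b)$ and $\Delta(a\rightharpoonup b)=(a_1\rightharpoonup b_1)\ot(a_2\rightharpoonup b_2)$, symmetrically for $\leftharpoonup$, together with unitality and associativity of both actions, and the fact that $\Delta$ is an algebra map so that $(b\bullet c)_1\ot(b\bullet c)_2=(b_1\bullet c_1)\ot(b_2\bullet c_2)$.

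First I would dispatch the unit axioms. Evaluating $\sigma_{H,H}(\id\ot u)$ on $a$ gives $(a_1\rightharpoonup 1)\ot(a_2\leftharpoonup 1)=(a_1\rightharpoonup 1)\ot a_2$ by unitality of $\leftharpoonup$, and this equals $u\ot\id$ evaluated on $a$, namely $1\ot a$, precisely when $a\rightharpoonup 1=\epsilon(a)1$: the forward direction substitutes \eqref{matchedpair-1}, while the converse follows by applying $\id\ot\epsilon$. Thus \eqref{braided1}$\iff$\eqref{matchedpair-1}, and the mirror computation, projecting with $\epsilon\ot\id$, gives \eqref{braided2}$\iff$\eqref{matchedpair-2}.

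The heart of the argument is \eqref{braided3}$\iff$\eqref{matchedpair-3}. I would compute both sides of \eqref{braided3} in Sweedler notation: the left-hand side $\sigma_{H,H}m_{23}$ sends $a\ot b\ot c$ to $(a_1\rightharpoonup(b_1\bullet c_1))\ot(a_2\leftharpoonup(b_2\bullet c_2))$, while the right-hand side $m_{12}(\sigma_{H,H})_{23}(\sigma_{H,H})_{12}$ sends it, after reindexing via coassociativity and the coalgebra-map property of the actions, to $\big((a_1\rightharpoonup b_1)\bullet((a_2\leftharpoonup b_2)\rightharpoonup c_1)\big)\ot\big((a_3\leftharpoonup b_3)\leftharpoonup c_2\big)$. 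These two rank-two expressions agree exactly when \eqref{matchedpair-3} holds: assuming \eqref{matchedpair-3} one rewrites the first tensor leg of the left-hand side, and associativity of $\leftharpoonup$, $a_3\leftharpoonup(b_3\bullet c_2)=(a_3\leftharpoonup b_3)\leftharpoonup c_2$, reconciles the second leg; conversely, applying $\id\ot\epsilon$ collapses both sides to the single-leg identity \eqref{matchedpair-3}. The equivalence \eqref{braided4}$\iff$\eqref{matchedpair-4} is entirely symmetric, with the roles of the two actions and of $m_{12},m_{23}$ interchanged, the first leg now governed by associativity of $\rightharpoonup$, $a_1\rightharpoonup(b_1\rightharpoonup c_1)=(a_1\bullet b_1)\rightharpoonup c_1$, and the reverse implication extracted by applying $\epsilon\ot\id$.

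The step I expect to require the most care is precisely this last pair of equivalences: the braided identities \eqref{braided3}--\eqref{braided4} are equalities in $H\ot H$, whereas \eqref{matchedpair-3}--\eqref{matchedpair-4} are equalities in $H$. One must therefore check both that the ``extra'' tensor leg is automatically reconciled by associativity of the actions, so that each rank-two braided identity encodes exactly one matched-pair condition and not more, and that the counit projections $\id\ot\epsilon$ and $\epsilon\ot\id$ genuinely recover the one-leg identities. The remaining work is routine manipulation of iterated coproducts.
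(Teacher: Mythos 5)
Your proposal is correct and follows essentially the same route as the paper's proof: compute both sides of each braided identity in Sweedler notation, use associativity of the complementary action together with the module-coalgebra property to reconcile the extra tensor leg for the implication from \eqref{matchedpair-1}--\eqref{matchedpair-4}, and apply the counit projections $\id\ot\epsilon$ or $\epsilon\ot\id$ for the converse. The only difference is cosmetic: you spell out the \eqref{braided3}/\eqref{matchedpair-3} and \eqref{braided1}/\eqref{matchedpair-1} cases and treat the others as mirrors, while the paper writes out \eqref{braided4}/\eqref{matchedpair-4} and \eqref{braided2}/\eqref{matchedpair-2} explicitly.
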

\begin{proof}
One has 
\[
\sigma_{H,H}m_{12}(a\otimes b\otimes c)=\sigma_{H,H}(ab\otimes c)=(a_{1}b_{1}\rightharpoonup c_{1})\otimes(a_{2}b_{2}\leftharpoonup c_{2}),
\]
while on the other hand
\[
\begin{split}
&\hspace{-2em}m_{23}(\sigma_{H,H})_{12}(\sigma_{H,H})_{23}(a\otimes b\otimes c)\\
&=m_{23}(\sigma_{H,H})_{12}\big(a\otimes(b_{1}\rightharpoonup c_{1})\otimes(b_{2}\leftharpoonup c_{2})\big)\\&=m_{23}\big((a_{1}\rightharpoonup(b_{1}\rightharpoonup c_{1})_{1})\otimes(a_{2}\leftharpoonup(b_{1}\rightharpoonup c_{1})_{2})\otimes(b_{2}\leftharpoonup c_{2})\big)\\&=(a_{1}\rightharpoonup(b_{1}\rightharpoonup c_{1}))\otimes(a_{2}\leftharpoonup(b_{2}\rightharpoonup c_{2}))(b_{3}\leftharpoonup c_{3})\\&=(a_{1}b_{1}\rightharpoonup c_{1})\otimes(a_{2}\leftharpoonup(b_{2}\rightharpoonup c_{2}))(b_{3}\leftharpoonup c_{3}).
\end{split}
\]
If \eqref{braided4} holds true then, by applying $\epsilon\ot\id$ on both sides, we obtain $(ab)\leftharpoonup c=(a\leftharpoonup(b_{1}\rightharpoonup c_{1}))(b_{2}\leftharpoonup c_{2})$, which is \eqref{matchedpair-4}. If conversely \eqref{matchedpair-4} holds true, then $\sigma_{H,H}$ clearly satisfies \eqref{braided4}.
One analogously proves that \eqref{braided3} is equivalent to \eqref{matchedpair-3}.
Observe that
\[
\sigma_{H,H}(1\otimes a)=(1\rightharpoonup a_{1})\otimes(1\leftharpoonup a_{2})=a_{1}\otimes(1\leftharpoonup a_{2}).
\]
If \eqref{braided2} holds true, then
\[
a_{1}\otimes(1\leftharpoonup a_{2})=a\otimes1.
\]
Hence, by applying $\epsilon\ot\id$, we obtain $1\leftharpoonup a=\epsilon(a)1$, which is \eqref{matchedpair-2}. Conversely, if \eqref{matchedpair-2} holds true, then $\sigma_{H,H}$ clearly satisfies \eqref{braided2}. The equivalence between \eqref{braided1} and \eqref{matchedpair-1} is proven analogously.
\end{proof}
\begin{remark}\label{corollary:twoactions}
Let $H$ be a bialgebra, and suppose given
\[
\sigma_{H,H}(x\otimes y)=(x_{1}\rightharpoonup y_{1})\otimes(x_{2}\leftharpoonup y_{2})
\]
where $\rightharpoonup:H\ot H\to H$ and $\leftharpoonup:H\ot H\to H$ are such that $\epsilon(a\rightharpoonup b)=\epsilon(a)\epsilon(b)=\epsilon(a\leftharpoonup b)$ for all $a,b\in H$. Then, $\rightharpoonup$ is retrieved as $(\id\ot\epsilon)\sigma_{H,H}$, and $\leftharpoonup$ as $(\epsilon\ot\id)\sigma_{H,H}$ (see \cite[p.\@ 300]{Majid-book}).
\end{remark}
Although there are examples of braidings that are obtained from a pair of actions satisfying \eqref{matchedpair-1}--\eqref{matchedpair-4} (see {\S}$\,$\ref{subsection:coQT-and-MP}), it is not true that all braidings are obtained from a pair of maps as in Remark \ref{corollary:twoactions}:
\begin{counterexample}\label{counterexample:noharpoons}
 Consider, for instance, a coquasitriangular bialgebra $(H,\Rr)$ (see {\S}$\,$\ref{subsection:coquasi}) and the usual braiding \eqref{braiding-comodules} on $\vect^H$
\[
\sigma_{H,H}\colon H\otimes H\to H\otimes H,\quad a\ot b\mapsto b_{1}\ot a_{1}\Rr(a_{2}\ot b_{2}).
\]
If we suppose by contradiction that $\sigma_{H,H}(a\ot b) = (a_1\rightharpoonup b_1)\ot (a_2\leftharpoonup b_2)$ for some maps $\rightharpoonup,\leftharpoonup$, then Remark \ref{corollary:twoactions} yields
\[
a\rightharpoonup b=\Rr(a\otimes b_{2})b_{1},\qquad a\leftharpoonup b=a_{1}\Rr(a_{2}\otimes b),
\]
but then
\begin{align*}
    b_1\ot a_1\Rr(a_2\ot b_2)&=\sigma_{H,H}(a\ot b)\\
    &= (a_1\rightharpoonup b_1)\ot (a_2\leftharpoonup b_2)\\
    &= \Rr(a_1\ot b_2)b_1\ot a_2\Rr(a_3\ot b_3),
\end{align*}
and this is generally false. As an example, consider the coquasitriangular structure on the Sweedler's Hopf algebra described in {\S}$\,$\ref{subsection:Sweedler}, with $a=b=g$: one has $\Rr(g\ot g) = -1$, whence $b_1\ot a_1\Rr(a_2\ot b_2) = -g\ot g$ and $\Rr(a_1\ot b_2)b_1\ot a_2\Rr(a_3\ot b_3) = g\ot g$, which differ if $\mathrm{char}(\Bbbk)\neq 2$.
\end{counterexample}

\begin{lemma}[{cf. Majid \cite[proof of Theorem 7.2.3]{Majid-book}}]\label{lemma:retrieve-actions}
    Given a bialgebra $H$ and a morphism $\sigma_{H,H}:H\otimes H\to H\otimes H$ satisfying \eqref{braided1}--\eqref{braided4}, the morphisms $\rightharpoonup\,:=(\mathrm{Id}\otimes\epsilon)\sigma_{H,H}$ and $\leftharpoonup\,:=(\epsilon\otimes\mathrm{Id})\sigma_{H,H}$ are left and right $H$-actions, respectively.
\end{lemma}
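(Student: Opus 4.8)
The plan is to check the two module axioms for each map directly from \eqref{braided1}--\eqref{braided4}, composing throughout with the counit. The conceptual point to keep in mind is that we are \emph{not} assuming $\sigma_{H,H}$ has the factorised form $(a_1\rightharpoonup b_1)\ot(a_2\leftharpoonup b_2)$ --- by Counterexample \ref{counterexample:noharpoons} this may fail --- so the module laws must be extracted formally, using only that $\rightharpoonup = (\id\ot\epsilon)\sigma_{H,H}$ and $\leftharpoonup = (\epsilon\ot\id)\sigma_{H,H}$.

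Unitality is immediate. From \eqref{braided2} we get $\sigma_{H,H}(1\ot a)=a\ot 1$, so $1\rightharpoonup a = (\id\ot\epsilon)(a\ot 1)=a$; dually \eqref{braided1} gives $\sigma_{H,H}(a\ot 1)=1\ot a$, whence $a\leftharpoonup 1 = (\epsilon\ot\id)(1\ot a)=a$.

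The heart of the argument is associativity, and the mechanism is that a multiplication on two legs collapses under the counit on those same legs: since $\epsilon$ is multiplicative, $(\id\ot\epsilon)m_{23}=\id\ot\epsilon\ot\epsilon$ and $(\epsilon\ot\id)m_{12}=\epsilon\ot\epsilon\ot\id$. I would therefore apply $\id\ot\epsilon$ to both sides of \eqref{braided4}, $\sigma_{H,H}m_{12}=m_{23}(\sigma_{H,H})_{12}(\sigma_{H,H})_{23}$. The left-hand side gives $(\id\ot\epsilon)\sigma_{H,H}(ab\ot c)=(ab)\rightharpoonup c$. On the right-hand side it becomes $(\id\ot\epsilon\ot\epsilon)(\sigma_{H,H})_{12}(\sigma_{H,H})_{23}$: writing $\sigma_{H,H}(b\ot c)=\sum_i p_i\ot q_i$, the inner braiding on the last two legs followed by $\epsilon$ on the third leg produces the scalar weights $\epsilon(q_i)$, while the outer braiding on the first two legs followed by $\id\ot\epsilon$ applies $a\rightharpoonup(-)$; by linearity this reassembles to $\sum_i \epsilon(q_i)\,(a\rightharpoonup p_i)=a\rightharpoonup\big(\sum_i p_i\,\epsilon(q_i)\big)=a\rightharpoonup(b\rightharpoonup c)$. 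Thus $(ab)\rightharpoonup c = a\rightharpoonup(b\rightharpoonup c)$. The mirror-image computation --- applying $\epsilon\ot\id$ to \eqref{braided3} and using $(\epsilon\ot\id)m_{12}=\epsilon\ot\epsilon\ot\id$ --- yields $a\leftharpoonup(bc)=(a\leftharpoonup b)\leftharpoonup c$.

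The only real difficulty is bookkeeping: one must be scrupulous about which leg each counit lands on and confirm that the spectator counit on the untouched leg factors out as a genuine scalar, so that the nested expressions $a\rightharpoonup(b\rightharpoonup c)$ and $(a\leftharpoonup b)\leftharpoonup c$ truly reconstitute. Carrying $\sigma_{H,H}(b\ot c)=\sum_i p_i\ot q_i$ with explicit indices throughout, rather than suppressing the summation, makes this bookkeeping routine and dispels any worry about hidden entanglement between the three legs.
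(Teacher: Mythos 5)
Your proof is correct and follows essentially the same route as the paper's: unitality from \eqref{braided1}--\eqref{braided2}, then associativity by applying $\id\ot\epsilon$ to \eqref{braided4} (respectively $\epsilon\ot\id$ to \eqref{braided3}) and using multiplicativity of $\epsilon$ to collapse $m_{23}$ (respectively $m_{12}$) into counits. The only difference is cosmetic --- you verify the recombination step $(\id\ot\epsilon\ot\epsilon)(\sigma_{H,H})_{12}(\sigma_{H,H})_{23}=a\rightharpoonup(b\rightharpoonup c)$ with explicit summation notation, where the paper does it by the operator identity $(\id\ot\epsilon)\sigma_{H,H}(\id\ot\id\ot\epsilon)(\id\ot\sigma_{H,H})$.
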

\begin{proof}
    We compute
\[
1\rightharpoonup c=(\mathrm{Id}\otimes\epsilon)\sigma_{H,H}(1\otimes c)=(\mathrm{Id}\otimes\epsilon)(c\otimes 1)=c,
\]
and also
\begin{align*}
    ab\rightharpoonup c&=(\mathrm{Id}\otimes\epsilon)\sigma_{H,H}m_{12}(a\otimes b\otimes c)\\&=(\mathrm{Id}\otimes\epsilon)m_{23}(\sigma_{H,H})_{12}(\sigma_{H,H})_{23}(a\otimes b\otimes c)\\&=(\mathrm{Id}\otimes\epsilon\otimes\epsilon)(\sigma_{H,H})_{12}(\sigma_{H,H})_{23}(a\otimes b\otimes c)\\&=(\mathrm{Id}\otimes\epsilon)\sigma_{H,H}(\mathrm{Id}\otimes\mathrm{Id}\otimes\epsilon)(\mathrm{Id}\otimes\sigma_{H,H})(a\otimes b\otimes c)\\&=(\mathrm{Id}\otimes\epsilon)\sigma_{H,H}(a\otimes(b\rightharpoonup c))\\&=a\rightharpoonup(b\rightharpoonup c).
\end{align*}
The proof is analogous for $\leftharpoonup$.
\end{proof}
In general, when $\sigma_{H,H}$ does not have the form $\sigma_{H,H}(x\otimes y)=(x_{1}\rightharpoonup y_{1})\otimes(x_{2}\leftharpoonup y_{2})$, the actions $\rightharpoonup$ and $\leftharpoonup$ defined as in the previous lemma need not form a matched pair, as the following counterexample shows.
\begin{counterexample}\label{counterexample:nomatchedpair}
Let us consider again
\[
\sigma_{H,H}\colon H\otimes H\to H\otimes H,\quad a\ot b\mapsto b_{1}\ot a_{1}\Rr(a_{2}\ot b_{2})
\]
and the left and right $H$-actions
\[
a\rightharpoonup b=\Rr(a\otimes b_{2})b_{1},\qquad a\leftharpoonup b=a_{1}\Rr(a_{2}\otimes b).
\]
Then, we have $a\rightharpoonup bc=\Rr(a\ot b_{2}c_{2})b_{1}c_{1}$, while
\[
(a_{1}\rightharpoonup b_{1})((a_{2}\leftharpoonup b_{2})\rightharpoonup c)=\Rr(a_{1}\ot b_{2})\Rr(a_{2}\ot c_{2})\Rr(a_{3}\ot b_{3})b_{1}c_{1}=\Rr(a\ot b_{3}c_{2}b_{2})b_{1}c_{1}.
\]
These are generally distinct (consider for instance the Sweedler's Hopf algebra in {\S}$\,$\ref{subsection:Sweedler}, $a = b = c=g$, $\mathrm{char}(\Bbbk)\neq 2$).
\end{counterexample}

\subsection{Hopf braces} A \emph{skew brace}, defined by Guarnieri and Vendramin \cite{guarnieri2017skew} generalising Rump \cite{rump2007braces}, is the datum of two group operations on the same set, satisfying a compatibility. Skew braces are ubiquitous in the theory of braidings and the Yang--Baxter equation, and hence yearned for a Hopf-theoretic version. This was defined by Angiono, Galindo and Vendramin \cite{angiono2017hopf}:
\begin{definition}
    A \emph{Hopf brace} $(H,\cdot,\bullet,1,\Delta, \epsilon,S,T)$ is the datum of two Hopf algebra structures $(H,\cdot,1,\Delta,\epsilon,S)$ and $(H,\bullet,1',\Delta,\epsilon,T)$ on the same coalgebra $(H,\Delta,\epsilon)$, satisfying the following compatibility:
    \begin{equation}
        \label{compHopfbraces}\tag{\scshape hbc}
    a\bullet(b\cdot c)=(a_{1}\bullet b)\cdot S(a_{2})\cdot(a_{3}\bullet c).
    \end{equation}
    Notice that the compatibility condition forces $1 = 1'$ (see \cite[Remark 1.3]{angiono2017hopf}).
\end{definition}
When no confusion arises, we shall simply indicate a Hopf brace by $(H,\cdot, \bullet)$. We recall the following characterisation:
\begin{proposition}[{Angiono, Galindo and Vendramin \cite[Theorems 1.12, 3.3, and Corollary 2.4]{angiono2017hopf}}]\label{Thm:Hopfbracecond.eq.} Given a Hopf algebra $H$, the following data are equivalent:
\begin{enumerate}
    \item[\textit{i.}] a Hopf brace structure $(H,\cdot,\bullet)$ on $H$;
    \item[\textit{ii.}] a Hopf algebra $A$, an action $\rightharpoonup$ of $A$ on $H$ such that $H$ is a left $A$-module algebra, and an isomorphism of coalgebras $\pi\colon A\to H$ which is a 1-cocycle of bialgebras; i.e., $\pi(ab) = \pi(a_1)(a_2\rightharpoonup \pi(b))$.
\end{enumerate} 
    In case $H$ is \emph{cocommutative}, the previous two are also equivalent to
\begin{enumerate}
   \item[\textit{iii.}] a matched pair on $H$, satisfying the additional condition \eqref{braided-commutativity}.
\end{enumerate}
Moreover, in the cocommutative case,
\[
\sigma_{H,H}\colon  a\otimes b \mapsto (a_{1}\rightharpoonup b_{1})\ot(a_{2}\leftharpoonup b_{2})
\]
is a coalgebra isomorphism and a solution to the braid equation on $H$.
\end{proposition}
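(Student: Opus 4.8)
The plan is to prove the general equivalence (\textit{i})$\iff$(\textit{ii}) first, by transporting the two Hopf structures along $\pi$, and only afterwards to address the cocommutative refinement (\textit{iii}) together with the final assertion on $\sigma_{H,H}$, since these are the sole places where cocommutativity is genuinely needed. For (\textit{i})$\Rightarrow$(\textit{ii}) I take $A:=(H,\bullet,1,\Delta,\epsilon,T)$ and $\pi:=\id_H$, which is a coalgebra isomorphism because the two Hopf structures share the coalgebra $(H,\Delta,\epsilon)$. I define the candidate action by $a\rightharpoonup b:=S(a_1)\cdot(a_2\bullet b)$, where $S$ is the antipode of $\cdot$. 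The cocycle identity $\pi(a\bullet b)=\pi(a_1)\cdot(a_2\rightharpoonup\pi(b))$ then reads $a_1\cdot S(a_2)\cdot(a_3\bullet b)=a\bullet b$, which is immediate from the antipode axiom for $\cdot$ and needs no extra hypothesis. The substantive point is that $H$ is a left $A$-module algebra for $\cdot$: expanding $a\rightharpoonup(b\cdot c)$ via \eqref{compHopfbraces} gives exactly $(a_1\rightharpoonup b)\cdot(a_2\rightharpoonup c)$, while $a\rightharpoonup 1=\epsilon(a)1$ follows from $1=1'$, and the identity $(a\bullet b)\rightharpoonup c=a\rightharpoonup(b\rightharpoonup c)$ is a further Sweedler-level consequence of \eqref{compHopfbraces}. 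Conversely, for (\textit{ii})$\Rightarrow$(\textit{i}) I transport the product of $A$ to $H$ by $a\bullet b:=\pi(\pi^{-1}(a)\,\pi^{-1}(b))$; since $\pi$ is a coalgebra isomorphism, $(H,\bullet)$ is a Hopf algebra on the same coalgebra, with antipode transported from $A$, and rewriting through the cocycle identity and the module-algebra axioms reproduces \eqref{compHopfbraces}, reversing the previous computation.

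For the cocommutative refinement I first produce the matched pair from a Hopf brace. The left action is the same $\rightharpoonup$ as above, and I set $a\leftharpoonup b:=T(a_1\rightharpoonup b_1)\bullet(a_2\bullet b_2)$, which is the component read off by Remark \ref{corollary:twoactions} from $\sigma_{H,H}(a\otimes b):=(a_1\rightharpoonup b_1)\otimes(a_2\leftharpoonup b_2)$. Braided commutativity \eqref{braided-commutativity} is then essentially formal: using that $\rightharpoonup$ is a coalgebra morphism one collapses $(a_1\rightharpoonup b_1)\bullet T(a_2\rightharpoonup b_2)$ to $\epsilon(a)\epsilon(b)1$ via the antipode of $\bullet$, leaving $a\bullet b$. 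The conditions \eqref{matchedpair-1}--\eqref{matchedpair-2} follow from $a\rightharpoonup1=\epsilon(a)1$ and its right analogue, whereas verifying that $\leftharpoonup$ is a genuine right action and that \eqref{matchedpair-3}--\eqref{matchedpair-4} hold is where cocommutativity is used to rebracket the iterated actions and to permute tensor legs.

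For the converse (\textit{iii})$\Rightarrow$(\textit{i}) I keep $\bullet$ and define the second product by the inverse cocycle formula $a\cdot b:=a_1\bullet(T(a_2)\rightharpoonup b)$; a short computation using that $\rightharpoonup$ is a left $\bullet$-action gives back $a_1\cdot(a_2\rightharpoonup b)=a\bullet b$, so that $\pi=\id$ is again a $1$-cocycle. It then remains to check that $\cdot$ is associative and unital with an antipode, and that \eqref{compHopfbraces} holds; here one feeds \eqref{matchedpair-3} and \eqref{braided-commutativity} through the definition of $\cdot$, absorbing the $\leftharpoonup$-terms by cocommutativity, and concludes by the already-proven (\textit{ii})$\Rightarrow$(\textit{i}).

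Finally, the ``moreover'' claim. That $\sigma_{H,H}$ is a coalgebra morphism follows because $\rightharpoonup,\leftharpoonup$ make $H$ a module coalgebra and $H$ is cocommutative; its inverse is built from the antipodes, giving a coalgebra isomorphism. The braid equation is the point I expect to be the main obstacle. By Lemma \ref{lemma:matchedpair-iff-braiding} the matched-pair axioms \eqref{matchedpair-1}--\eqref{matchedpair-4} are equivalent to $\sigma_{H,H}$ satisfying \eqref{braided1}--\eqref{braided4}; but Lemma \ref{lem:mhex} would upgrade these to the hexagon axioms, and hence to the braid equation, only when the multiplication is a monomorphism, which fails for a Hopf algebra. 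Cocommutativity must therefore be exploited directly. The cleanest route I foresee is that in the cocommutative case \eqref{matchedpair5} holds automatically, so $(H,H)$ is a genuine matched pair of bialgebras, $H\Join H$ is a bialgebra, and $\sigma_{H,H}$ is the canonical braiding attached to it (equivalently, the braiding on $H$ regarded as an object of ${}^{H^\bullet}_{H^\bullet}\mathcal{YD}$), whence $\sigma_{12}\sigma_{23}\sigma_{12}=\sigma_{23}\sigma_{12}\sigma_{23}$ follows by naturality. Alternatively one verifies the braid equation by a direct Sweedler computation, repeatedly invoking \eqref{matchedpair-3}, \eqref{matchedpair-4}, \eqref{braided-commutativity} and cocommutativity.
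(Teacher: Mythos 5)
The paper does not actually prove this proposition: it is recalled verbatim from Angiono--Galindo--Vendramin with a citation, so your proposal can only be measured against the cited arguments and against the non-cocommutative generalisations this paper develops later. Measured that way, your route is essentially the standard one and is almost everywhere correct. For (\textit{i})$\iff$(\textit{ii}) you take $\pi=\id$, $A=H^\bullet$, $a\rightharpoonup b=S(a_1)\cdot(a_2\bullet b)$, and transport the product back along a coalgebra isomorphism; this is exactly \cite[Theorem 1.12]{angiono2017hopf}, and the module-algebra verifications you invoke are the same computations the paper redoes \emph{without} cocommutativity in Lemma \ref{lem:rightharpoon} (including the identity $S(a_1\bullet b)\cdot a_2=S(a_1)\cdot(a_2\bullet S(b))$ behind your ``further Sweedler-level consequence''). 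Your formulas $a\leftharpoonup b=T(a_1\rightharpoonup b_1)\bullet a_2\bullet b_2$ and $a\cdot b=a_1\bullet(T(a_2)\rightharpoonup b)$ are precisely \eqref{rightaction-from-leftaction} and \eqref{def-cdot}, and your collapse of \eqref{braided-commutativity} is the computation of Remark \ref{rem:leftharpcoalg}. One imprecision: that collapse uses $\Delta(a\rightharpoonup b)=(a_1\rightharpoonup b_1)\ot(a_2\rightharpoonup b_2)$, and for a Hopf brace this colinearity is \emph{not} free --- it needs $\Delta S=(S\ot S)\Delta$ and a permutation of the legs of $a$, i.e.\@ cocommutativity. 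So \eqref{braided-commutativity} is not ``essentially formal'' at the Hopf-brace level; cocommutativity enters already there, not only in \eqref{matchedpair-3}--\eqref{matchedpair-4}.

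The genuine gap is in the final step, the braid equation --- the very point you flag as the main obstacle. Your ``cleanest route'' is wrong as stated: $\sigma_{H,H}(a\ot b)=(a_1\rightharpoonup b_1)\ot(a_2\leftharpoonup b_2)$ is \emph{not} the braiding of ${}^{H^\bullet}_{H^\bullet}\mathcal{YD}$ on $H$. With the adjoint coaction, cocommutativity makes $\mathrm{Ad}_L$ trivial, so $\sigma^{\mathcal{YD}}_{H,H}=\tau$ (this is observed in the paper right after Lemma \ref{rbraiding}), whereas $\sigma_{H,H}$ is nontrivial whenever the actions are; hence no naturality argument in $\mathcal{YD}$ can yield the braid relation for $\sigma_{H,H}$. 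Likewise, ``the canonical braiding attached to $H\Join H$'' does not exist as a ready-made device: the bialgebra structure of $H\Join H$ uses only \eqref{matchedpair-1}--\eqref{matchedpair5}, while the braid equation for $\sigma_{H,H}$ genuinely requires \eqref{braided-commutativity} (through $m_\bullet\sigma_{H,H}=m_\bullet$) together with cocommutativity. You are right that Lemma \ref{lem:mhex} is unavailable, since $m$ is never injective for $\dim H\geq 2$; but that leaves only your fallback, the direct Sweedler computation repeatedly using \eqref{matchedpair-3}, \eqref{matchedpair-4}, \eqref{braided-commutativity} and cocommutativity --- which is exactly what Angiono--Galindo--Vendramin carry out in their Theorem 2.3 and Corollary 2.4, and which your proposal names but does not perform. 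The same remark applies to the invertibility of $\sigma_{H,H}$: ``built from the antipodes'' is correct in spirit but an explicit inverse (or an appeal to the cited Corollary 2.4) is needed. So the proposal is structurally sound and repairable, but the one step you identified as hard is the one step not actually proven, and the shortcut you propose for it fails.
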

The additional condition \eqref{braided-commutativity} is the \emph{braided commutativity} of $\bullet$ with respect to the braiding operator $\sigma_{H,H}$.
\begin{remark}
    In the cocommutative case, the matched pair obtained from a Hopf brace $(H,\cdot,\bullet)$ is given by $h\rightharpoonup k=S(h_{1})\cdot(h_{2}\bullet k)$ and $h\leftharpoonup k=T(h_{1}\rightharpoonup k_{1})\bullet h_{2}\bullet k_{2}$. Then, the corresponding solution of the braid equation is
\[
\sigma_{H,H}(x\otimes y)=\big(S(x_{1})\cdot(x_{2}\bullet y_{1})\big)\otimes\big(T(S(x_{3})\cdot(x_{4}\bullet y_{2}))\bullet x_{5}\bullet y_{3}\big).
\]
\end{remark}
    
\section{Matched pairs of actions and Yetter--Drinfeld braces}\label{section:constructing-hopf-in-YD}
\noindent
In this section, we generalise the correspondence of Proposition \ref{Thm:Hopfbracecond.eq.} by dropping the hypothesis of cocommutativity. As a result, we shall obtain a correspondence between matched pairs of actions on a Hopf algebra, and novel structures which we call \emph{Yetter--Drinfeld braces}.

\subsection{Yetter--Drinfeld braces} Let $(H,\bullet,1,\Delta, \epsilon, T) $ be a Hopf algebra, $(H,\rightharpoonup,\leftharpoonup)$ be a matched pair of actions on $H$. We begin by defining a second operation $\cdot$ and a map $S\colon H\to H$, mimicking \cite[Proposition 3.2]{angiono2017hopf}:
\begin{align}
    \label{def-cdot}& a\cdot b:= a_1\bullet (T(a_2)\rightharpoonup b);\\
    \label{def-S}& S(a):= a_1\rightharpoonup T(a_2).
\end{align}
Notice that $\bullet$ is retrieved from $\cdot$ as
\begin{equation}\label{compblulletdotharp}
a\bullet b= a_{1}\bullet(T(a_{2})\rightharpoonup(a_{3}\rightharpoonup b))= a_{1}\cdot(a_{2}\rightharpoonup b).
\end{equation}
Recall that the linearity of $\Delta$ and $\epsilon$ with respect to $\rightharpoonup$ read as:
\[
\Delta(a\rightharpoonup b)=(a_{1}\rightharpoonup b_{1})\otimes(a_{2}\rightharpoonup b_{2}),\qquad \epsilon(a\rightharpoonup b)=\epsilon(a)\epsilon(b),
\]
similarly for the linearity of $\Delta$ and $\epsilon$ with respect to $\leftharpoonup$.
\begin{remark}
    The condition \eqref{braided-commutativity} allows one to retrieve the right action from the left action, and vice versa. Indeed, by the linearity of $\Delta$ and $\epsilon$ with respect to $\rightharpoonup$, \eqref{braided-commutativity} is equivalent to
    \begin{align}
        \label{rightaction-from-leftaction} &a\leftharpoonup b = T(a_1\rightharpoonup b_1)\bullet a_2\bullet b_2.
      \end{align}
Moreover, \eqref{rightaction-from-leftaction} is clearly equivalent to      
      \begin{align}
        \label{T-on-harpoon}&T(a\rightharpoonup b)=(a_{1}\leftharpoonup b_{1})\bullet T(b_{2})\bullet T(a_{2}).
    \end{align}
From \eqref{T-on-harpoon} one can easily deduce
\begin{equation}\label{T-on-harpoon-with-T}
    T(a_{1}\rightharpoonup T(a_{2}))=a_{1}\leftharpoonup T(a_{2}).
\end{equation}
\end{remark}

\begin{remark}\label{S2}
    Let us observe that $S^{2}$ is given on an element $a\in H$ by
\begin{align*}
S^{2}(a)&=S(a_{1}\rightharpoonup T(a_{2}))\\&=(a_{1}\rightharpoonup T(a_{4}))\rightharpoonup T(a_{2}\rightharpoonup T(a_{3}))\\
\overset{\eqref{T-on-harpoon-with-T}}&{=}(a_{1}\rightharpoonup T(a_{4}))\rightharpoonup(a_{2}\leftharpoonup T(a_{3})).
\end{align*}
\end{remark}

\begin{lemma}\label{lem:deltadot}
    One has $\Delta (a\cdot b) = (a_{1}\bullet(T(a_{3})\rightharpoonup b_{1}))\ot(a_{2}\cdot b_{2})$.
\end{lemma}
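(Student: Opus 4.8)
The plan is to compute $\Delta(a\cdot b)$ directly from the definition \eqref{def-cdot} using only three structural facts about the Hopf algebra $(H,\bullet,1,\Delta,\epsilon,T)$ together with the action $\rightharpoonup$, and then to reconcile the outcome with the stated right-hand side by a Sweedler relabelling. First I would expand $a\cdot b = a_1\bullet(T(a_2)\rightharpoonup b)$ and apply $\Delta$. Since $(H,\bullet,1,\Delta,\epsilon,T)$ is a bialgebra, $\Delta$ is an algebra morphism for $\bullet$, so it distributes over the product into the componentwise product on $H\ot H$, giving
\[
\Delta(a\cdot b)=\Delta(a_1)\bullet\Delta(T(a_2)\rightharpoonup b).
\]

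The three facts I would invoke to simplify the two factors are: \emph{(i)} the multiplicativity of $\Delta$ for $\bullet$, as just noted; \emph{(ii)} the left $H$-module coalgebra property of $\rightharpoonup$, namely $\Delta(x\rightharpoonup b)=(x_1\rightharpoonup b_1)\ot(x_2\rightharpoonup b_2)$; and \emph{(iii)} the fact that the antipode $T$ of $(H,\bullet)$ is an anti-coalgebra morphism, i.e.\@ $\Delta(T(c))=T(c_2)\ot T(c_1)$. Applying \emph{(ii)} with $x=T(a_2)$ and then \emph{(iii)} to rewrite $\Delta(T(a_2))$, and expressing the four-fold coproduct of $a$ as $a_1\ot a_2\ot a_3\ot a_4$ via coassociativity, I expect to arrive at
\[
\Delta(a\cdot b) = \big(a_1\bullet(T(a_4)\rightharpoonup b_1)\big)\ot\big(a_2\bullet(T(a_3)\rightharpoonup b_2)\big).
\]

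It then remains to identify this with the claimed $(a_1\bullet(T(a_3)\rightharpoonup b_1))\ot(a_2\cdot b_2)$. The second tensor factor $a_2\cdot b_2$, when unfolded by \eqref{def-cdot}, splits its own $a_2$ once more, so the stated right-hand side tacitly carries a four-fold coproduct of $a$ whose middle legs feed into $\cdot$; unwinding it yields exactly $a_2\bullet(T(a_3)\rightharpoonup b_2)$ in the four-fold labelling, while the first factor matches $a_1\bullet(T(a_4)\rightharpoonup b_1)$. The only real care needed --- and the step most prone to error --- is precisely this bookkeeping: tracking which of the four coproduct legs of $a$ lands in which tensor slot, bearing in mind that $T$ reverses the order of the two legs it inherits from $a_2$. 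No deeper input is required, so once the index matching is set up correctly the identity follows at once.
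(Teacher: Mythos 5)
Your proposal is correct and follows essentially the same route as the paper's own proof: expand $a\cdot b=a_{1}\bullet(T(a_{2})\rightharpoonup b)$, use multiplicativity of $\Delta$ for $\bullet$ together with the module-coalgebra property of $\rightharpoonup$ and the anti-comultiplicativity of $T$ to reach the four-fold form $(a_{1}\bullet(T(a_{4})\rightharpoonup b_{1}))\ot(a_{2}\bullet(T(a_{3})\rightharpoonup b_{2}))$, then fold the last two legs back into $a_{2}\cdot b_{2}$ via \eqref{def-cdot}. The Sweedler relabelling you single out as the delicate step is exactly the bookkeeping the paper performs, so no gap remains.
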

\begin{proof}We compute
    \begin{align*}
    \Delta(a\cdot b)&=\Delta(a_{1}\bullet(T(a_{2})\rightharpoonup b))\\&=(a_{1}\bullet(T(a_{3})\rightharpoonup b)_{1})\ot(a_{2}\bullet(T(a_{3})\rightharpoonup b)_{2})
    \\&=(a_{1}\bullet(T(a_{4})\rightharpoonup b_{1}))\ot(a_{2}\bullet(T(a_{3})\rightharpoonup b_{2}))\\&=(a_{1}\bullet(T(a_{3})\rightharpoonup b_{1}))\ot(a_{2}\cdot b_{2}).
\end{align*}
\end{proof}
Although $\cdot$ is not a morphism of coalgebras in $\mathsf{Vec}_\Bbbk$, this will be sorted out by setting ourselves in a suitable category.

An immediate computation shows the form of the twice iterated $\Delta$, which is going to come in handy later:
\begin{equation}
    \label{triple-delta} \begin{split}
    (\Delta\ot\id)\Delta\,(a\cdot b)&=\Delta(a_{1}\bullet(T(a_{3})\rightharpoonup b_{1}))\otimes(a_{2}\cdot b_{2})\\&=
(a_{1}\bullet(T(a_{5})\rightharpoonup b_{1}))\otimes(a_{2}\bullet(T(a_{4})\rightharpoonup b_{2}))\otimes(a_{3}\cdot b_{3})\\&=(a_{1}\bullet(T(a_{6})\rightharpoonup b_{1}))\otimes(a_{2}\bullet(T(a_{5})\rightharpoonup b_{2}))\otimes(a_{3}\bullet(T(a_{4})\rightharpoonup b_{3})).
\end{split}
\end{equation}
\begin{proposition}\label{proposition:algebra}
    Let $(H,\bullet,1,\Delta,\epsilon,T)$ be a Hopf algebra, $(H,\rightharpoonup,\leftharpoonup)$ a matched pair of actions. Define $\cdot$ and $S$ as in \eqref{def-cdot}, \eqref{def-S}. Then, $(H,\cdot,1)$ becomes an algebra, $\epsilon$ is a morphism of algebras with respect to this structure, and $S$ satisfies $a_{1}\cdot S(a_{2})=\epsilon(a)1=S(a_{1})\cdot a_{2}$. Moreover, one has
    \begin{equation}
    \label{leftantipode} a\rightharpoonup b=S(a_{1})\cdot(a_{2}\bullet b)
    \end{equation}
and the two operations $\bullet$ and $\cdot$ satisfy the Hopf brace compatibility \eqref{compHopfbraces}.
\end{proposition}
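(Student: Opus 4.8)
The plan is to establish the claims essentially in the stated order, with the associativity of $\cdot$ as the technical core from which the remaining assertions follow formally. The unitality $1\cdot a = a = a\cdot 1$ is immediate from \eqref{def-cdot}: the left unit uses $1\rightharpoonup a = a$, while the right unit uses \eqref{matchedpair-1} in the form $T(a_2)\rightharpoonup 1 = \epsilon(a_2)1$. Multiplicativity of $\epsilon$, i.e.\ $\epsilon(a\cdot b)=\epsilon(a)\epsilon(b)$, follows at once from $\epsilon(x\rightharpoonup y)=\epsilon(x)\epsilon(y)$ together with $\epsilon T=\epsilon$.

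For associativity I would expand $a\cdot(b\cdot c)$ and $(a\cdot b)\cdot c$ separately to the common normal form $(a_1\bullet(T(a_3)\rightharpoonup b_1))\bullet\big(((T(a_2)\leftharpoonup b_2)\bullet T(b_3))\rightharpoonup c\big)$. For $a\cdot(b\cdot c)$, applying \eqref{def-cdot} twice and pushing $T(a_2)$ through the $\bullet$-product by \eqref{matchedpair-3}, then merging the two nested $\rightharpoonup$'s by associativity of the action, yields this form directly. For $(a\cdot b)\cdot c$ I would start from the coproduct formula of Lemma~\ref{lem:deltadot}, apply anti-multiplicativity of $T$ to $T((a\cdot b)_2)$, and rewrite $T(T(a_3)\rightharpoonup b_2)$ by \eqref{T-on-harpoon}. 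This is where the only genuine difficulty arises: the expansion produces the term $T^2(a_3)\bullet T(a_2)$, which equals $T(a_2\bullet T(a_3))$ by anti-multiplicativity and collapses to $\epsilon(a_2)1$ by the antipode axiom for $\bullet$. After this collapse (and relabelling the surviving coproduct legs) $(a\cdot b)\cdot c$ takes exactly the normal form above. I expect this $T^2$-collapse, and the attendant Sweedler bookkeeping, to be the main obstacle.

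The antipode identities are next. The identity $a_1\cdot S(a_2)=\epsilon(a)1$ is short: expanding \eqref{def-cdot} and \eqref{def-S} and merging the actions gives $a_1\bullet((T(a_2)\bullet a_3)\rightharpoonup T(a_4))$, where $T(a_2)\bullet a_3$ collapses to $\epsilon(a_2)1$, leaving $a_1\bullet T(a_2)=\epsilon(a)1$. The opposite identity $S(a_1)\cdot a_2=\epsilon(a)1$ is more delicate. After expanding $\Delta(S(a_1))$ and using \eqref{T-on-harpoon-with-T} to rewrite $T(a_2\rightharpoonup T(a_3))$ as $a_2\leftharpoonup T(a_3)$, the expression becomes the right-hand side of \eqref{matchedpair-3}; reading \eqref{matchedpair-3} backwards recombines it into $a_1\rightharpoonup(T(a_2)\bullet a_3)$, where $T(a_2)\bullet a_3$ collapses by the antipode of $\bullet$ and \eqref{matchedpair-1} finally gives $a\rightharpoonup 1=\epsilon(a)1$.

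With associativity and the antipode secured, the last two assertions are formal. For \eqref{leftantipode} I would rewrite $a_2\bullet b$ as $a_2\cdot(a_3\rightharpoonup b)$ by \eqref{compblulletdotharp}, so that $S(a_1)\cdot(a_2\bullet b)=(S(a_1)\cdot a_2)\cdot(a_3\rightharpoonup b)$; the antipode identity collapses the first two factors, leaving $a\rightharpoonup b$. For the compatibility \eqref{compHopfbraces} I would first record the module-algebra identity $a\rightharpoonup(b\cdot c)=(a_1\rightharpoonup b)\cdot(a_2\rightharpoonup c)$, which is proved directly---and independently of associativity---by the same blend of \eqref{matchedpair-3}, \eqref{T-on-harpoon} and the antipode collapse used for associativity. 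Then, rewriting $a_1\bullet b$ and $a_3\bullet c$ on the right-hand side of \eqref{compHopfbraces} by \eqref{compblulletdotharp} and using associativity of $\cdot$, the central factor $S(a_2)\cdot a_3$ collapses by the antipode identity and the right-hand side becomes $a_1\cdot(a_2\rightharpoonup b)\cdot(a_3\rightharpoonup c)$; the left-hand side becomes $a_1\cdot(a_2\rightharpoonup(b\cdot c))$ by \eqref{compblulletdotharp}, and the module-algebra identity closes the argument.
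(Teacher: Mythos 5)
Your proposal is correct, and all of its individual claims check out: the common normal form $(a_1\bullet(T(a_3)\rightharpoonup b_1))\bullet\big(((T(a_2)\leftharpoonup b_2)\bullet T(b_3))\rightharpoonup c\big)$ is indeed reached from both sides (I verified the $T^2(a_3)\bullet T(a_2)=T(a_2\bullet T(a_3))=\epsilon(a_2)1$ collapse you flag as the main obstacle), the antipode computations match, and the module-algebra identity does follow from \eqref{matchedpair-3}, \eqref{T-on-harpoon} and an antipode collapse without invoking associativity. Compared with the paper, your proof differs in organization at two points. For associativity, the paper runs a single directed chain from $a\cdot(b\cdot c)$ to $(a\cdot b)\cdot c$, rewriting the right action via \eqref{rightaction-from-leftaction} and recombining $T(T(a_3)\rightharpoonup b_2)\bullet T(a_2)$ into $T(a_2\cdot b_2)$ by anti-multiplicativity, with Lemma \ref{lem:deltadot} appearing only at the final recombination; your meet-in-the-middle version uses the equivalent identity \eqref{T-on-harpoon} and Lemma \ref{lem:deltadot} up front, at the cost of the extra $T^2$-collapse but with the benefit of two shorter, independently checkable expansions. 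The genuinely different route is your proof of \eqref{compHopfbraces}: the paper verifies it by a direct four-line computation at the $\bullet$-level (inserting $T(a_2)\bullet a_3$, recognizing the definition of $\cdot$, and finishing with \eqref{leftantipode}), whereas you first establish $a\rightharpoonup(b\cdot c)=(a_1\rightharpoonup b)\cdot(a_2\rightharpoonup c)$ — an identity the paper only proves later, inside Theorem \ref{thm:matched-pair-yields-YDbrace}, when showing $\cdot$ is left $H$-linear — and then deduce \eqref{compHopfbraces} formally from associativity, the antipode identity and \eqref{compblulletdotharp}. Your factorization makes explicit the mechanism ``module algebra plus the 1-cocycle identity \eqref{compblulletdotharp} implies brace compatibility,'' which mirrors the converse direction in Lemma \ref{lem:rightharpoon}, and the module-algebra identity is needed later anyway; the paper's computation is shorter and keeps the proposition self-contained. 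Both are complete proofs.
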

\begin{proof}
We first observe that
\[
a\cdot1=a_{1}\bullet(T(a_{2})\rightharpoonup 1)=a_{1}\bullet\epsilon(T(a_{2}))1=a_{1}\epsilon(a_{2})=a,\qquad 1\cdot a=1\bullet(T(1)\rightharpoonup a)=a,
\]
for all $a\in H$. We now prove that $\cdot$ is associative. Given $a,b,c\in H$ we have: 
\begin{align*}
    a\cdot (b\cdot c)&= a\cdot \big(b_1\bullet (T(b_2)\rightharpoonup c)\big)\\
    &= a_1\bullet \Big( T(a_2)\rightharpoonup \big(b_1\bullet (T(b_2)\rightharpoonup c)\big) \Big)\\\overset{\eqref{matchedpair-3}}&{=} a_1\bullet \Big( T(a_3)\rightharpoonup b_1 \Big)\bullet \Big( (T(a_2)\leftharpoonup b_2)\rightharpoonup (T(b_3)\rightharpoonup c)\Big)\\\overset{\eqref{rightaction-from-leftaction}}&{=} a_1\bullet (T(a_4)\rightharpoonup b_1)\bullet \Big( T\big(T(a_3)\rightharpoonup b_2\big)\rightharpoonup \big( T(a_2)\rightharpoonup (b_3\rightharpoonup (T(b_4)\rightharpoonup c)) \big) \Big)\\
&= a_1\bullet (T(a_4)\rightharpoonup b_1)\bullet \Big( T\big(T(a_3)\rightharpoonup b_2\big)\rightharpoonup \big( T(a_2)\rightharpoonup c \big) \Big)\\
&= a_1\bullet (T(a_4)\rightharpoonup b_1)
    \bullet \Big(T\big(a_2\bullet (T(a_3)\rightharpoonup b_2)\big)\rightharpoonup c\Big)\\
    &= a_1\bullet (T(a_3)\rightharpoonup b_1)\bullet \Big(T(a_2\cdot b_2)\rightharpoonup c\Big)\\
    &= (a\cdot b)_1\bullet \Big(T\big((a\cdot b)_2\big)\rightharpoonup c\Big)\\
    &= (a\cdot b)\cdot c,
    \end{align*}
as desired. We then compute
\[
\epsilon(a\cdot b)=\epsilon(a_{1}\bullet(T(a_{2})\rightharpoonup b))=\epsilon(a_{1})\epsilon(T(a_{2})\rightharpoonup b)=\epsilon(T(a))\epsilon(b)=\epsilon(a)\epsilon(b).
\]
Moreover, one has
\begin{align*}
    a_{1}\cdot S(a_{2})&=a_{1}\bullet( T(a_{2})\rightharpoonup S(a_{3}))\\&=a_{1}\bullet(T(a_{2})\rightharpoonup(a_{3}\rightharpoonup T(a_{4})))\\&=a_{1}\bullet(T(a_{2})\bullet a_{3}\rightharpoonup T(a_{4}))\\&=a_{1}\bullet T(a_{2})\\&=\epsilon(a)1
\end{align*}
and 
\begin{align*}
  S(a_{1})\cdot a_{2}&=S(a_{1})_{1}\bullet(T(S(a_{1})_{2})\rightharpoonup a_{2})\\&=(a_{1_{1}}\rightharpoonup T(a_{2})_{1})\bullet(T(a_{1_{2}}\rightharpoonup T(a_{2})_{2})\rightharpoonup a_{3})\\&=(a_{1}\rightharpoonup T(a_{4}))\bullet(T(a_{2}\rightharpoonup T(a_{3}))\rightharpoonup a_{5})\\
  \overset{\eqref{T-on-harpoon-with-T}}&{=}(a_{1}\rightharpoonup T(a_{4}))\bullet\Big(\big(a_{2}\leftharpoonup T(a_{3})\big)\rightharpoonup a_{5}\Big)\\&=(a_{1_{1}}\rightharpoonup T(a_{2})_{1})\bullet\Big(\big(a_{1_{2}}\leftharpoonup T(a_{2})_{2}\big)\rightharpoonup a_{3}\Big)\\\overset{\eqref{matchedpair-3}}&{=}a_{1}\rightharpoonup(T(a_{2})\bullet a_{3})\\&=\epsilon(a)1.
\end{align*}
As for \eqref{leftantipode}, observe that
\[
a\rightharpoonup b=S(a_{1})\cdot a_{2}\cdot(a_{3}\rightharpoonup b)\overset{\eqref{compblulletdotharp}}{=}S(a_{1})\cdot(a_{2}\bullet b).
\]
We finally prove the Hopf brace compatibility \eqref{compHopfbraces}:
\begin{align*}
    a\bullet(b\cdot c)&=a\bullet b_{1}\bullet(T(b_{2})\rightharpoonup c)\\&=a_{1}\bullet b_{1}\bullet(T(b_{2})\bullet T(a_{2})\bullet a_{3}\rightharpoonup c)\\&=a_{1}\bullet b_{1}\bullet(T(a_{2}\bullet b_{2})\rightharpoonup(a_{3}\rightharpoonup c))\\&=(a_{1}\bullet b)\cdot(a_{2}\rightharpoonup c)\\\overset{\eqref{leftantipode}}&{=}(a_{1}\bullet b)\cdot S(a_{2})\cdot(a_{3}\bullet c).
\end{align*}
\end{proof}
As promised, we now reinterpret this additional operation $\cdot$ in a suitable category, which is going to be the category of \emph{Yetter--Drinfeld modules} over $H$.
\begin{definition}
    Let $(H,\bullet,1,\Delta, \epsilon, T)$ be a Hopf algebra. A (\emph{left-left}) \emph{Yetter--Drinfeld module} on $H$ is the datum of a left $H$-module $X$ with an action $\rightharpoonup\colon H\ot X\to X$, which is also a left $H$-comodule with a coaction $\rho\colon X\to H\ot X$, satisfying the following compatibility for all $x\in X$, $a\in H$:
    \begin{equation}
        \label{YD}\tag{\sc yd} \rho(a\rightharpoonup x)=a_{1}\bullet x_{-1}\bullet T(a_{3})\otimes(a_{2}\rightharpoonup x_{0}).
    \end{equation}
    A morphism of Yetter--Drinfeld modules is a morphism of both left $H$-modules and left $H$-comodules. We denote by $^H_H\mathcal{YD}$ the category of Yetter--Drinfeld modules over $H$. 
\end{definition}

\begin{lemma}\label{lemma:equiv-cond-colinearity}
    Let $(H,\bullet,1,\Delta,\epsilon,T)$ be a Hopf algebra. Suppose that $\rightharpoonup\colon H\ot H\to H$ is a morphism of coalgebras, $\leftharpoonup\colon H\ot H\to H$ is a linear map satisfying $\epsilon(a\leftharpoonup b)=\epsilon(a)\epsilon(b)$, and \eqref{braided-commutativity} is satisfied. The following conditions are equivalent, and they are all equivalent to \eqref{matchedpair5}\textup{:}\begin{align}
    &\label{delta-colinear}(a\leftharpoonup b)_1\ot (a\leftharpoonup b)_2 = (a_1\leftharpoonup b_1)\ot (a_2\leftharpoonup b_2),\text{ i.e.\@ }\leftharpoonup\text{ is a morphism of coalgebras}\,;\\
    &\label{compharpoonbullet3} \big((a_1\rightharpoonup b_1)\bullet (a_3\leftharpoonup b_3)\big)\ot \big((a_2\rightharpoonup b_2)\bullet (a_4\leftharpoonup b_4)\big)\\
    \nonumber& \hspace{2em}= \big((a_1\rightharpoonup b_1)\bullet (a_3\leftharpoonup b_3)_1\big)\ot \big((a_2\rightharpoonup b_2)\bullet (a_3\leftharpoonup b_3)_2\big);\\
    &\label{compharpoonbullet2} (a_2\leftharpoonup b_2)\ot \big((a_1\rightharpoonup b_1)\bullet (a_3\leftharpoonup b_3)\big) = (a_1\leftharpoonup b_1)\ot (a_2\bullet b_2);\\
    \label{compharpoonpbullet}
    &\big((a_{1}\rightharpoonup b_{1})\bullet(a_{3}\leftharpoonup b_{3})\big)\ot(a_{2}\rightharpoonup b_{2})=(a_{1}\bullet b_{1})\ot(a_{2}\rightharpoonup b_{2});
\end{align}
In particular, having a matched pair of actions $(H,\rightharpoonup,\leftharpoonup)$ all the previous equalities hold true.
\end{lemma}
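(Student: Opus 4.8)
The plan is to make $\sigma_{H,H}(a\ot b):=(a_1\rightharpoonup b_1)\ot(a_2\leftharpoonup b_2)$ the central object, so that $\rightharpoonup=(\id\ot\epsilon)\sigma_{H,H}$, $\leftharpoonup=(\epsilon\ot\id)\sigma_{H,H}$, and \eqref{braided-commutativity} reads $m_\bullet\sigma_{H,H}=m_\bullet$. Since $\rightharpoonup$ is comultiplicative and \eqref{braided-commutativity} holds, I may freely use the explicit formula \eqref{rightaction-from-leftaction}, namely $a\leftharpoonup b=T(a_1\rightharpoonup b_1)\bullet a_2\bullet b_2$, which rewrites every occurrence of $\leftharpoonup$ in terms of $\rightharpoonup$, $\bullet$ and $T$; this is the main computational engine. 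First I would record one unconditional identity: applying $\Delta$ to \eqref{braided-commutativity} and using that $\Delta$ is multiplicative and $\rightharpoonup$ comultiplicative gives
\[
\Delta(a\bullet b)=\big((a_1\rightharpoonup b_1)\bullet(a_3\leftharpoonup b_3)_1\big)\ot\big((a_2\rightharpoonup b_2)\bullet(a_3\leftharpoonup b_3)_2\big),
\]
which is exactly the right-hand side of \eqref{compharpoonbullet3}. Hence \eqref{compharpoonbullet3} is equivalent to the assertion that its left-hand side also equals $\Delta(a\bullet b)$; and applying \eqref{braided-commutativity} to each leg of $\Delta(a\bullet b)$ turns this into a comparison of two elements that differ only by exchanging the Sweedler labels carried by one $\rightharpoonup$-factor and one $\leftharpoonup$-factor.

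Second, I would establish the cheap half, namely \eqref{matchedpair5}$\Rightarrow$\eqref{compharpoonbullet3}, \eqref{compharpoonbullet2}, \eqref{compharpoonpbullet}. Each of these three displayed identities is the image, under a single multiplication $m_{12}$ or $m_{23}$, of an $H^{\ot 3}$-identity that is simply \eqref{matchedpair5} applied to two adjacent coproduct legs with the third leg a spectator: for \eqref{compharpoonpbullet} one swaps the middle legs and multiplies the first two, for \eqref{compharpoonbullet2} one swaps the first two legs and multiplies the last two, and for \eqref{compharpoonbullet3} one swaps the middle legs of $\Delta(a\bullet b)$. Assuming \eqref{matchedpair5}, which is a genuine identity of linear maps, these adjacent swaps are legitimate underneath the coproduct, so each implication is a one-line application of \eqref{matchedpair5} followed by a multiplication. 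In the same spirit \eqref{delta-colinear}$\Rightarrow$\eqref{compharpoonbullet3} is immediate from the unconditional identity, since comultiplicativity of $\leftharpoonup$ lets one replace $(a_3\leftharpoonup b_3)_1\ot(a_3\leftharpoonup b_3)_2$ by $(a_3\leftharpoonup b_3)\ot(a_4\leftharpoonup b_4)$.

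The hard part will be the converse direction, i.e. recovering \eqref{matchedpair5} from a condition in which the distinguishing factor has already been absorbed into a $\bullet$-product (and, symmetrically, proving \eqref{matchedpair5}$\Rightarrow$\eqref{delta-colinear}, which must pass through the formula for $\leftharpoonup$ rather than through a bare swap). The obstruction is concrete: projecting any of the four identities by $\id\ot\epsilon$ or $\epsilon\ot\id$ returns only the tautologies $a\bullet b$ or $a\leftharpoonup b$, so \eqref{matchedpair5} cannot be extracted by a counit; the auxiliary factor that separates each condition from a bare index-exchange is entangled with the remaining tensor legs through one coproduct and must be cancelled. I would cancel it with the antipode, applying a bijective Galois-type map such as $a\ot b\mapsto a_1\ot a_2\bullet b$ (with inverse $a\ot b\mapsto a_1\ot T(a_2)\bullet b$), or equivalently convolving the offending leg with $T$, so as to strip the $\bullet$-factor and isolate \eqref{matchedpair5}. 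Controlling the Sweedler bookkeeping during this step, so that the antipode relation $x_1\bullet T(x_2)=\epsilon(x)1$ fires against the intended factor and not across an intervening term, is the crux of the whole lemma; here the auxiliary identities \eqref{T-on-harpoon} and \eqref{T-on-harpoon-with-T} are likely to be needed to keep the $T$-terms in the correct form.

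Finally, the ``in particular'' clause is then immediate: in a matched pair of actions $H$ is by definition a right $H$-module coalgebra, so $\leftharpoonup$ is a morphism of coalgebras, that is, \eqref{delta-colinear} holds; by the equivalences just proved, \eqref{compharpoonbullet3}, \eqref{compharpoonbullet2}, \eqref{compharpoonpbullet} and \eqref{matchedpair5} hold as well.
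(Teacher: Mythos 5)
Your proposal is correct and follows essentially the same route as the paper's proof: the same unconditional identity (the right-hand side of \eqref{compharpoonbullet3} equals $\Delta(a\bullet b)$, by comultiplicativity of $\rightharpoonup$ and \eqref{braided-commutativity}), adjacent-leg swaps under the coproduct for the implications from \eqref{matchedpair5}, and convolution with $T$ to strip the absorbed $\bullet$-factors in the converses. The crux you flag is resolved in the paper exactly by the ordering you gesture at, with one precision worth recording: the antipode is only ever fired against $\rightharpoonup$-factors (whose comultiplicativity is a standing hypothesis), which yields \eqref{compharpoonbullet3}$\iff$\eqref{delta-colinear} first, and only then does \eqref{delta-colinear} license cancelling $T$ of $\leftharpoonup$-factors in the remaining equivalences---so neither \eqref{T-on-harpoon} nor \eqref{T-on-harpoon-with-T} is actually needed, and your fallback engine \eqref{rightaction-from-leftaction} (which would likewise reduce every cancellation to $\rightharpoonup$-factors) turns out to be dispensable.
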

\begin{proof}
   First, we notice that
    \begin{align*}
    &\hspace{-2em}\big((a_1\rightharpoonup b_1)\bullet (a_3\leftharpoonup b_3)_1\big)\ot \big((a_2\rightharpoonup b_2)\bullet (a_3\leftharpoonup b_3)_2\big)\\
    &= \big((a_1\rightharpoonup b_1)_1\bullet (a_2\leftharpoonup b_2)_1\big)\ot \big((a_1\rightharpoonup b_1)_2\bullet (a_2\leftharpoonup b_2)_2\big)\\
    &= \big((a_1\rightharpoonup b_1)\bullet (a_2\leftharpoonup b_2)\big)_1\ot \big((a_1\rightharpoonup b_1)\bullet (a_2\leftharpoonup b_2)\big)_2\\
    \overset{\eqref{braided-commutativity}}&{=} (a\bullet b)_1\ot (a\bullet b)_2\\
    &= (a_{1}\bullet b_{1})\ot(a_2\bullet b_2).
    \end{align*}
Now, we check that \eqref{compharpoonbullet3} and \eqref{delta-colinear} are equivalent:
    \begin{align*}
\eqref{compharpoonbullet3}\iff\;&\big(T(a_1\rightharpoonup b_1)\bullet (a_2\rightharpoonup b_2)\bullet (a_4\leftharpoonup b_4)\big)\ot \big((a_3\rightharpoonup b_3)\bullet (a_5\leftharpoonup b_5)\big)\\
 &= \big(T(a_1\rightharpoonup b_1)\bullet (a_2\rightharpoonup b_2)\bullet (a_4\leftharpoonup b_4)_1\big)\ot \big((a_3\rightharpoonup b_3)\bullet (a_4\leftharpoonup b_4)_2\big)\\
 \iff\;& (a_2\leftharpoonup b_2)\ot \big( (a_1\rightharpoonup b_1)\bullet (a_3\leftharpoonup b_3)\big) = (a_2\leftharpoonup b_2)_1\ot \big( (a_1\rightharpoonup b_1)\bullet (a_2\leftharpoonup b_2)_2\big)\\
 \iff\;& (a_3\leftharpoonup b_3)\ot  \big(T(a_1\rightharpoonup b_1)\bullet (a_2\rightharpoonup b_2)\bullet (a_4\leftharpoonup b_4)\big)\\
 & = (a_3\leftharpoonup b_3)_1\ot \big( T(a_1\rightharpoonup b_1)\bullet (a_2\rightharpoonup b_2)\bullet (a_3\leftharpoonup b_3)_2\big)\\
 \iff\;& (a_1\leftharpoonup b_1)\ot (a_2\leftharpoonup b_2) = (a\leftharpoonup b)_1\ot (a\leftharpoonup b)_2.
\end{align*} 
We prove that \eqref{compharpoonbullet2} and \eqref{compharpoonpbullet} are both equivalent to \eqref{compharpoonbullet3}. 
    One has 
\begin{align*}
    &\hspace{-2em}(a_2\leftharpoonup b_2)\ot \big((a_1\rightharpoonup b_1)\bullet (a_3\leftharpoonup b_3)\big) = (a_1\leftharpoonup b_1)\ot (a_2\bullet b_2)\\
    &\Leftrightarrow\big((a_1\rightharpoonup b_1)\bullet(a_3\leftharpoonup b_3)\big)\ot \big((a_2\rightharpoonup b_2)\bullet (a_4\leftharpoonup b_4)\big) = \big((a_1\rightharpoonup b_1)\bullet (a_2\leftharpoonup b_2)\big)\ot (a_3\bullet b_3)\\
    &\Leftrightarrow \big((a_1\rightharpoonup b_1)\bullet(a_3\leftharpoonup b_3)\big)\ot \big((a_2\rightharpoonup b_2)\bullet (a_4\leftharpoonup b_4)\big) = (a_1\bullet b_1)\ot (a_2\bullet b_2)\\
    &\Leftrightarrow\eqref{compharpoonbullet3}.
    \end{align*}
and
\begin{align*}
        \eqref{compharpoonbullet3}&\iff&&\!\!\big((a_{1}\rightharpoonup b_{1})\bullet(a_{3}\leftharpoonup b_{3})\big)\ot\big((a_{2}\rightharpoonup b_{2})\bullet (a_4\leftharpoonup b_4)\big)=(a_{1}\bullet b_{1})\ot(a_2\bullet b_2)\\
        &\iff &&\!\!\big((a_{1}\rightharpoonup b_{1})\bullet(a_{3}\leftharpoonup b_{3})\big)\ot\big((a_{2}\rightharpoonup b_{2})\bullet (a_4\leftharpoonup b_4)\big)=(a_{1}\bullet b_{1})\ot\big((a_{2}\rightharpoonup b_{2})\bullet (a_3\leftharpoonup b_3)\big)\\
        &\iff &&\!\!\big((a_{1}\rightharpoonup b_{1})\bullet(a_{3}\leftharpoonup b_{3})\big)\ot(a_{2}\rightharpoonup b_{2})=(a_{1}\bullet b_{1})\ot(a_{2}\rightharpoonup b_{2}),
    \end{align*}
where we use the fact that \eqref{compharpoonbullet3} implies \eqref{delta-colinear} and $\epsilon(a\leftharpoonup b)=\epsilon(a)\epsilon(b)$ in the latter equivalence. This completes the proof that \eqref{delta-colinear}--\eqref{compharpoonpbullet} are equivalent. Finally, using the assumption \eqref{braided-commutativity}, \eqref{matchedpair5} clearly implies \eqref{compharpoonbullet2}; and, conversely, assuming \eqref{compharpoonbullet2} (and thus \eqref{delta-colinear}) we immediately retrieve \eqref{matchedpair5}.
\end{proof}
\begin{corollary}
    Let $(H,\bullet,1,\Delta,\epsilon,T)$ be a Hopf algebra. Every matched pair of actions $(H,\rightharpoonup,\leftharpoonup)$ is a matched pair $(H,H)$ on $H$.
\end{corollary}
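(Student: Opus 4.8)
The plan is to reduce the statement to Lemma~\ref{lemma:equiv-cond-colinearity}. By Definition~\ref{def:MPA}, a matched pair of actions $(H,\rightharpoonup,\leftharpoonup)$ satisfies \eqref{matchedpair-1}--\eqref{matchedpair-4} together with the braided-commutativity condition \eqref{braided-commutativity}. Recalling the discussion following Definition~\ref{def:MPA}, the double cross product $H\Join H$ is a matched pair of bialgebras---that is, a matched pair on $H$---precisely when the remaining condition \eqref{matchedpair5} is met. So the whole task reduces to verifying \eqref{matchedpair5}.

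First I would check that the standing hypotheses of Lemma~\ref{lemma:equiv-cond-colinearity} are in force. The definition of a matched pair of actions requires $H$ to be simultaneously a left and a right $H$-module coalgebra with respect to $\rightharpoonup$ and $\leftharpoonup$; in particular, $\rightharpoonup$ is a morphism of coalgebras, $\leftharpoonup$ is a morphism of coalgebras (hence in particular $\epsilon(a\leftharpoonup b)=\epsilon(a)\epsilon(b)$), and \eqref{braided-commutativity} holds by assumption. These are exactly the hypotheses of the lemma.

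Finally, the colinearity of $\leftharpoonup$ built into the definition is nothing but condition \eqref{delta-colinear}. By Lemma~\ref{lemma:equiv-cond-colinearity} this condition is equivalent to \eqref{matchedpair5}, so the latter holds automatically. Consequently $(H,H)$ satisfies \eqref{matchedpair-1}--\eqref{matchedpair5} and is a matched pair on $H$. There is no genuine obstacle here: the only point worth flagging is that the module-coalgebra requirement in Definition~\ref{def:MPA} is not a harmless technicality, but is precisely what forces \eqref{matchedpair5}, through the chain of equivalences supplied by the lemma.
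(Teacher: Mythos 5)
Your proposal is correct and follows exactly the paper's intended route: the corollary is stated as an immediate consequence of Lemma~\ref{lemma:equiv-cond-colinearity}, whose closing clause (``having a matched pair of actions all the previous equalities hold true'') encodes precisely the deduction you spell out, namely that the right module-coalgebra condition on $\leftharpoonup$ gives \eqref{delta-colinear}, which under \eqref{braided-commutativity} is equivalent to \eqref{matchedpair5}, so that $(H,H)$ satisfies \eqref{matchedpair-1}--\eqref{matchedpair5}. Your concluding observation, that the module-coalgebra requirement in Definition~\ref{def:MPA} is exactly what forces \eqref{matchedpair5}, is also the correct reading of why the corollary holds.
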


\begin{lemma}\label{rbraiding}
    Let $(H,\rightharpoonup,\leftharpoonup)$ be as in the setting of Proposition \ref{proposition:algebra}. Then, $(H,\rightharpoonup,\mathrm{Ad}_{L})$ is in ${}^H_H\mathcal{YD}$, where $\mathrm{Ad}_{L}:H\to H\otimes H$, $a\mapsto a_{1}\bullet T(a_{3})\otimes a_{2}$ is the left $H$-adjoint coaction. As a consequence, denoted by $\sigma^\mathcal{YD}$ the braiding of $^H_H\mathcal{YD}$, one has that $\sigma^{\mathcal{YD}}_{H,H}\colon H\otimes H\to H\otimes H$ is explicitly described by $\sigma^{\mathcal{YD}}_{H,H}\colon a\ot b\mapsto(a_{1}\bullet T(a_{3})\rightharpoonup b)\ot a_{2}$.
\end{lemma}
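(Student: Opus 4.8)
The plan is to check directly that the triple $(H,\rightharpoonup,\mathrm{Ad}_L)$ meets the three defining requirements of a left-left Yetter--Drinfeld module, and then to obtain the braiding by specialising the generic formula for the braiding of $^H_H\mathcal{YD}$. The module axiom is free of charge: by the definition of a matched pair of actions, $\rightharpoonup$ is a left action turning $H$ into a left $H$-module coalgebra. For the comodule axiom, $\mathrm{Ad}_L$ is the usual left adjoint coaction; I would verify counitality, which is immediate since $\epsilon(a_1\bullet T(a_3))a_2 = \epsilon(a_1)\epsilon(a_3)a_2 = a$, and coassociativity, by expanding both $(\Delta\otimes\id)\mathrm{Ad}_L$ and $(\id\otimes\mathrm{Ad}_L)\mathrm{Ad}_L$ with the aid of the multiplicativity of $\Delta$ with respect to $\bullet$ and the identity $\Delta T = (T\otimes T)\Delta^{\mathrm{op}}$; both sides reduce to $a_1\bullet T(a_5)\otimes a_2\bullet T(a_4)\otimes a_3$.

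The core of the argument is the Yetter--Drinfeld compatibility \eqref{YD}. I would start from $\mathrm{Ad}_L(a\rightharpoonup x)$ and use that $\rightharpoonup$ is a morphism of coalgebras to write $(a\rightharpoonup x)_i = a_i\rightharpoonup x_i$, so that $\mathrm{Ad}_L(a\rightharpoonup x) = (a_1\rightharpoonup x_1)\bullet T(a_3\rightharpoonup x_3)\otimes(a_2\rightharpoonup x_2)$. Invoking \eqref{T-on-harpoon} to rewrite $T(a_3\rightharpoonup x_3) = (a_3\leftharpoonup x_3)\bullet T(x_4)\bullet T(a_4)$ turns the first tensor factor into $(a_1\rightharpoonup x_1)\bullet(a_3\leftharpoonup x_3)\bullet T(x_4)\bullet T(a_4)$, with second factor $a_2\rightharpoonup x_2$. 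The first three Sweedler legs now display exactly the left-hand side of \eqref{compharpoonpbullet}, which holds for any matched pair of actions by Lemma \ref{lemma:equiv-cond-colinearity}.

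The substitution of \eqref{compharpoonpbullet} is the one delicate point, and the way I would make it rigorous is to isolate the three-legged sub-coproduct: writing $a = a'\otimes a''$ and $x = x'\otimes x''$ with $a', x'$ carrying legs $1$--$3$ and $a'', x''$ the last leg, the whole expression becomes $G\big(P(a'\otimes x'),\, x''\otimes a''\big)$, where $P$ is the left-hand side of \eqref{compharpoonpbullet} and $G(p\otimes q,\, s\otimes t) = p\bullet T(s)\bullet T(t)\otimes q$ is linear in its first argument. Replacing $P$ by the right-hand side $(a'_1\bullet x'_1)\otimes(a'_2\rightharpoonup x'_2)$ collapses two of the legs and, after relabelling, yields $a_1\bullet x_1\bullet T(x_3)\bullet T(a_3)\otimes(a_2\rightharpoonup x_2)$. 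Since $x_{-1}\otimes x_0 = x_1\bullet T(x_3)\otimes x_2$ for the adjoint coaction, this is precisely $a_1\bullet x_{-1}\bullet T(a_3)\otimes(a_2\rightharpoonup x_0)$, establishing \eqref{YD}.

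Finally, for the consequence I would simply recall that $^H_H\mathcal{YD}$ is braided with $\sigma^{\mathcal{YD}}_{X,Y}(x\otimes y) = x_{-1}\rightharpoonup y\otimes x_0$; feeding in $x_{-1}\otimes x_0 = a_1\bullet T(a_3)\otimes a_2$ gives $\sigma^{\mathcal{YD}}_{H,H}(a\otimes b) = (a_1\bullet T(a_3)\rightharpoonup b)\otimes a_2$, as stated. The main obstacle is entirely in the second and third paragraphs: keeping the Sweedler bookkeeping straight through the expansion of $\mathrm{Ad}_L(a\rightharpoonup x)$, and justifying the spectator-leg substitution of \eqref{compharpoonpbullet} by the linearity argument above rather than by an informal leg-matching.
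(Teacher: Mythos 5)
Your proof is correct and follows essentially the same route as the paper's: expand $\mathrm{Ad}_L(a\rightharpoonup b)$ using that $\rightharpoonup$ is a coalgebra morphism, rewrite via \eqref{T-on-harpoon}, and collapse with \eqref{compharpoonpbullet} (available by Lemma \ref{lemma:equiv-cond-colinearity} since $\leftharpoonup$ is a coalgebra morphism), then read off the braiding from the standard formula $\sigma^{\mathcal{YD}}(x\ot y)=(x_{-1}\rightharpoonup y)\ot x_0$. Your explicit verification of the comodule axioms and the linearisation argument justifying the spectator-leg substitution of \eqref{compharpoonpbullet} are sound elaborations of steps the paper leaves implicit, not a different method.
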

\begin{proof}
    We already know that $(H,\rightharpoonup)$ is a left $H$-module and $(H,\mathrm{Ad}_{L})$ is a left $H$-comodule. Since $\leftharpoonup$ is a morphism of coalgebras, the equivalent conditions of Lemma \ref{lemma:equiv-cond-colinearity} hold true. We check the compatibility of Yetter--Drinfeld modules \eqref{YD}:
    \begin{align*}
        \mathrm{Ad}_L(a\rightharpoonup b)& =\Big((a_{1}\rightharpoonup b_{1})\bullet T(a_{3}\rightharpoonup b_{3})\Big)\otimes(a_{2}\rightharpoonup b_{2})\\\overset{\eqref{T-on-harpoon}}&{=}(a_{1}\rightharpoonup b_{1})\bullet(a_{3}\leftharpoonup b_{3})\bullet T(b_{4})\bullet T(a_{4})\otimes(a_{2}\rightharpoonup b_{2})\\\overset{\eqref{compharpoonpbullet}}&{=}a_{1}\bullet b_{1}\bullet T(b_{3})\bullet T(a_{3})\otimes(a_{2}\rightharpoonup b_{2})\\&=a_{1}\bullet b_{-1}\bullet T(a_{3})\otimes(a_{2}\rightharpoonup b_{0}).
\end{align*}
\end{proof}

\begin{remark}
    Notice that the braiding $\sigma^{\mathcal{YD}}_{H,H}$ coincides with the flip map $\tau$ in case the adjoint coaction $\mathrm{Ad}_{L}$ is trivial, i.e. $a_{1}\bullet T(a_{3})\ot a_{2}=1\ot a$, which clearly happens in the cocommutative case. Conversely, if $a_{1}\bullet T(a_{3})\ot a_{2}=1\ot a$, then
\[
a_{1}\bullet T(a_{3})\ot a_{2}=1\ot a=1\ot a_{1}\epsilon(a_{2})=a_{2}\bullet T(a_{3})\ot a_{1}
\]
and so $a_{1}\ot a_{2}=a_{2}\ot a_{1}$, i.e. $H$ is cocommutative. Furthermore, let us also observe that $\sigma^{\mathcal{YD}}_{H,H}$ coincides with $\tau$ also in case $\rightharpoonup$ is the trivial action. 
\end{remark}
\begin{proposition}\label{prop:S-is-antipode}
    Let $H$ be a Hopf algebra, $(B, \cdot, 1_B, \Delta_B, \epsilon_B)$ a bialgebra in ${}^H_H\mathcal{YD}$. Suppose that $S\colon B\to B$ is linear map, and a convolution inverse of $\id_B$: then, $B$ is a Hopf algebra in $^H_H\mathcal{YD}$, with antipode $S$.
\end{proposition}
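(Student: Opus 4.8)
The plan is to observe that the antipode axiom for a Hopf algebra in the braided monoidal category $^H_H\mathcal{YD}$ is exactly the two-sided convolution-inverse condition $m(\id\ot S)\Delta = u\epsilon = m(S\ot\id)\Delta$, which does not involve the braiding $\sigma^{\mathcal{YD}}$ and is granted by hypothesis. Hence the only thing left to prove is that the linear map $S$ is in fact a morphism in $^H_H\mathcal{YD}$, i.e.\@ that it is both $H$-linear and $H$-colinear; the remaining bialgebra axioms of $B$ are already assumed. This is the Yetter--Drinfeld analogue of the classical fact that a linear convolution inverse of the identity automatically respects any extra structure carried by a bialgebra, and I would prove it via the uniqueness of two-sided convolution inverses, applied in two auxiliary convolution algebras over $\mathsf{Vec}_\Bbbk$.

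For $H$-linearity, I would work in the convolution algebra $\mathrm{Hom}_{\mathsf{Vec}_\Bbbk}(H\ot B,B)$, formed from the ordinary tensor coalgebra $H\ot B$ and the ordinary algebra $B$ (both obtained by forgetting the Yetter--Drinfeld data). Writing $\alpha\colon H\ot B\to B$, $a\ot x\mapsto a\rightharpoonup x$ for the action, I would show that the two maps $a\ot x\mapsto a\rightharpoonup S(x)$ and $a\ot x\mapsto S(a\rightharpoonup x)$ are both two-sided convolution inverses of $\alpha$. For the first, $H$-linearity of $m_B$ and $u_B$ (so that $B$ is a module algebra) reduces $\alpha\ast(\alpha\circ(\id\ot S))$ to $a\rightharpoonup(x_1\cdot S(x_2))=\epsilon(a)\epsilon_B(x)1_B$; for the second, $H$-linearity of $\Delta_B$ turns $\alpha\ast(S\circ\alpha)$ into $y_1\cdot S(y_2)$ with $y=a\rightharpoonup x$, again the convolution unit. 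Uniqueness of the inverse then forces $a\rightharpoonup S(x)=S(a\rightharpoonup x)$.

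For $H$-colinearity, I would dually work in $\mathrm{Hom}_{\mathsf{Vec}_\Bbbk}(B,H\ot B)$, built from the coalgebra $B$ and the tensor algebra $H\ot B$, and show that both $(\id\ot S)\rho$ and $\rho\circ S$ are two-sided convolution inverses of the coaction $\rho\colon B\to H\ot B$. The clean ingredient is that $H$-colinearity of $m_B$ and $u_B$ makes $\rho$ an algebra morphism $B\to H\ot B$, so that $\rho\ast(\rho\circ S)=\rho\circ m_B(\id\ot S)\Delta_B=\rho\circ u_B\epsilon_B$ collapses onto the convolution unit $u_{H\ot B}\epsilon_B$; the competing map $(\id\ot S)\rho$ is handled symmetrically, using the colinearity of $\Delta_B$ and $\epsilon_B$. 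Uniqueness again yields $\rho\circ S=(\id\ot S)\rho$, which is precisely $H$-colinearity. Once $S$ is known to be a morphism of Yetter--Drinfeld modules, it is an antipode for $B$ in $^H_H\mathcal{YD}$, and $B$ is a Hopf algebra therein.

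The only genuinely delicate point is organisational rather than computational: one must set up the two auxiliary convolution algebras over $\mathsf{Vec}_\Bbbk$ (and \emph{not} over $^H_H\mathcal{YD}$), verify that the tensor objects $H\ot B$ carry the expected ordinary algebra and coalgebra structures with respect to the flip $\tau$, and recognise that the module- and comodule-compatibility identities invoked above are nothing but the statements that $m_B,u_B,\Delta_B,\epsilon_B$ are morphisms in $^H_H\mathcal{YD}$. With this bookkeeping in place, every step reduces to the convolution-inverse property of $S$ together with the uniqueness of such inverses.
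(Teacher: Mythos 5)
Your proposal is correct, but it takes a genuinely different route from the paper. The paper's proof (credited to A.\@ Ardizzoni) is a one-stroke categorical argument: it forms the canonical map $\mathrm{can}=(m_B\ot m_B)(\id_B\ot\sigma^{\mathcal{YD}}_{B,B}\ot\id_B)(\id_B\ot u_B\ot\Delta_B)$, explicitly $a\ot b\mapsto (a\cdot b_1)\ot b_2$, which is a composite of morphisms in ${}^H_H\mathcal{YD}$; the convolution invertibility of $\id_B$ makes $\mathrm{can}$ bijective with inverse $a\ot b\mapsto (a\cdot S(b_1))\ot b_2$, the inverse of an invertible morphism is again a morphism in the category, and $S=(\id_B\ot\epsilon_B)\,\mathrm{can}^{-1}(u_B\ot\id_B)$ exhibits $S$ as a morphism in ${}^H_H\mathcal{YD}$ directly. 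You instead split the task into $H$-linearity and $H$-colinearity and settle each by uniqueness of two-sided inverses in an auxiliary \emph{ordinary} convolution algebra, $\mathrm{Hom}(H\ot B,B)$ and $\mathrm{Hom}(B,H\ot B)$ respectively; your computations are sound, since the identities you invoke (that $B$ is a module algebra and module coalgebra, and that $\rho$ is an algebra map into the flip tensor algebra $H\ot B$ while $\Delta_B,\epsilon_B$ are colinear) are exactly the statements that $m_B,u_B,\Delta_B,\epsilon_B$ are morphisms in ${}^H_H\mathcal{YD}$, none of which involves the braiding, and the underlying (co)associativity of $B$ is ordinary because the braided (co)algebra axioms do not involve $\sigma^{\mathcal{YD}}$ either. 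What each approach buys: the paper's argument is shorter, visibly uses the braiding only once (inside $\mathrm{can}$), and transports verbatim to any braided monoidal category; yours is more elementary and classical, avoids the braiding altogether, and makes transparent precisely which compatibilities are consumed at each step. One cosmetic remark: given that $a\ot x\mapsto a\rightharpoonup S(x)$ (resp.\@ $\rho S$) is already a two-sided inverse, it suffices to check that the competing map is a one-sided inverse --- uniqueness in an associative unital convolution algebra then forces equality --- though in your setting both sides are in fact easily verified.
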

\begin{proof}\hspace{-4pt}\footnote{We are grateful to A.\@ Ardizzoni for suggesting this proof.}
    Define $\mathrm{can}\colon B\ot B \to B\ot B$ as $\mathrm{can}:=(m_{B}\ot m_{B})(\id_B\ot\sigma^{\mathcal{YD}}_{B,B}\ot\id_{B})(\id_{B}\ot u_{B}\ot\Delta_{B}) $ which is a composition of morphisms in ${}^H_H\mathcal{YD}$, and hence a morphism in ${}^H_H\mathcal{YD}$. The map $\mathrm{can}$ is explicitly given by $\mathrm{can}(a\ot b) = (a\cdot b_{1})\otimes b_{2} $. Since $S$ is a convolution inverse of $\mathrm{Id}_{B}$, $\mathrm{can}$ is invertible with inverse given by $\mathrm{can}^{-1}\colon a\ot b \mapsto (a\cdot S(b_1))\ot b_2$. Since $\mathrm{can}$ is invertible and it is a morphism in $^{H}_{H}\mathcal{YD}$, the inverse is also a morphism in ${}^H_H\mathcal{YD}$. Now observe that $S = (\id_B\ot \epsilon_B)\mathrm{can}^{-1}(u_B\ot \id_B)$, thus $S$ is a composition of morphisms in ${}^H_H\mathcal{YD}$: thus $S$ is a morphism in ${}^H_H\mathcal{YD}$, and hence $B$ is in $\mathrm{Hopf}({}^H_H\mathcal{YD})$.
\end{proof}
We merge the previous results in the following theorem:
\begin{theorem}\label{thm:matched-pair-yields-YDbrace}
    Let $(H,\bullet,1,\Delta,\epsilon,T)$ be a Hopf algebra, $(H,\rightharpoonup,\leftharpoonup)$ a matched pair of actions. Define $\cdot$ and $S$ as in \eqref{def-cdot},\eqref{def-S}. Then: 
    \begin{enumerate}
        \item[\textit{i.}]$(H,\cdot,1,\Delta,\epsilon,S)$ is in $\mathrm{Hopf}(^{H}_{H}\mathcal{YD})$ with the action $\rightharpoonup$ and the adjoint coaction $\mathrm{Ad}_L$;
        \item[\textit{ii.}]the two operations $\bullet$ and $\cdot$ satisfy the compatibility condition \eqref{compHopfbraces}.
    \end{enumerate}
\end{theorem}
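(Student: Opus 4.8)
The plan is to assemble the ingredients already in place, so that the only genuinely new content is to recognise that the data produced by Proposition \ref{proposition:algebra} live in the braided category ${}^H_H\mathcal{YD}$ and organise into a bialgebra there. Part (ii) requires no further work at all: the Hopf brace compatibility \eqref{compHopfbraces} between $\bullet$ and $\cdot$ was already established at the end of Proposition \ref{proposition:algebra}. So the entire burden is on part (i).

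For part (i), I would start from Lemma \ref{rbraiding}, which tells us that $(H,\rightharpoonup,\mathrm{Ad}_L)$ is an object of ${}^H_H\mathcal{YD}$ and gives the explicit braiding $\sigma^{\mathcal{YD}}_{H,H}(a\ot b) = (a_1\bullet T(a_3)\rightharpoonup b)\ot a_2$. Next I would verify that each structure map is a morphism in ${}^H_H\mathcal{YD}$. The unit $u\colon\Bbbk\to H$ is a morphism because $a\rightharpoonup 1=\epsilon(a)1$ by \eqref{matchedpair-1} and $\mathrm{Ad}_L(1)=1\ot 1$; the counit $\epsilon$ is a morphism because $\epsilon(a\rightharpoonup b)=\epsilon(a)\epsilon(b)$ (so it is $\rightharpoonup$-linear into the trivial module) and $(\id\ot\epsilon)\mathrm{Ad}_L(a)=a_1\bullet T(a_2)=\epsilon(a)1$ (so it is colinear into the trivial comodule). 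The comultiplication $\Delta$ is $\rightharpoonup$-linear precisely because $\rightharpoonup$ is a coalgebra morphism, $\Delta(a\rightharpoonup b)=(a_1\rightharpoonup b_1)\ot(a_2\rightharpoonup b_2)$, which is $\Delta$-equivariance into the diagonal module $H\ot H$, while its colinearity for $\mathrm{Ad}_L$ is a direct computation. The product $m_\cdot$ is a morphism in ${}^H_H\mathcal{YD}$: its $\rightharpoonup$-linearity amounts to $a\rightharpoonup(b\cdot c)=(a_1\rightharpoonup b)\cdot(a_2\rightharpoonup c)$, and its colinearity to $\mathrm{Ad}_L(b\cdot c)$ agreeing with the tensor-comodule coaction transported along $m_\cdot$; both follow from the matched-pair axioms \eqref{matchedpair-1}--\eqref{matchedpair-4}, braided commutativity \eqref{braided-commutativity}, and the derived identity \eqref{T-on-harpoon}. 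Since associativity, unitality, coassociativity and counitality already hold in $\mathsf{Vec}_\Bbbk$ by Proposition \ref{proposition:algebra} and the coalgebra structure of $H$, and since all four structure maps are now morphisms in ${}^H_H\mathcal{YD}$, it follows that $(H,\cdot,1)$ is an algebra and $(H,\Delta,\epsilon)$ a coalgebra in ${}^H_H\mathcal{YD}$.

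It then remains to verify the bialgebra compatibility in the braided sense, i.e. that $\Delta$ is multiplicative for the braided tensor-product algebra on $H\ot H$, whose product is $m_{H\ot H}=(m_\cdot\ot m_\cdot)(\id\ot\sigma^{\mathcal{YD}}_{H,H}\ot\id)$. Inserting the braiding of Lemma \ref{rbraiding}, the braided product $\Delta(a)\cdot_{H\ot H}\Delta(b)$ equals $\big(a_1\cdot(a_2\bullet T(a_4)\rightharpoonup b_1)\big)\ot(a_3\cdot b_2)$, and I would check that this coincides with the expression $\big(a_1\bullet(T(a_3)\rightharpoonup b_1)\big)\ot(a_2\cdot b_2)$ furnished by Lemma \ref{lem:deltadot}, after reducing the first tensor factor with \eqref{matchedpair-3} and \eqref{T-on-harpoon}. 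The multiplicativity of $\epsilon$ and the identities $\Delta(1)=1\ot 1$, $\epsilon(1)=1$ are immediate, so $(H,\cdot,1,\Delta,\epsilon)$ is a bialgebra in ${}^H_H\mathcal{YD}$. Finally, by Proposition \ref{proposition:algebra} the map $S$ is a two-sided convolution inverse of $\id_H$, so Proposition \ref{prop:S-is-antipode} upgrades this bialgebra to a Hopf algebra $(H,\cdot,1,\Delta,\epsilon,S)$ in ${}^H_H\mathcal{YD}$, proving (i).

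The main obstacle is the pair of braided compatibility checks: establishing that $m_\cdot$ is a morphism in ${}^H_H\mathcal{YD}$ (specifically its colinearity with respect to $\mathrm{Ad}_L$), and that $\Delta$ is multiplicative for the braided product. Both reduce to matching two Sweedler expressions through the matched-pair axioms and the derived identity \eqref{T-on-harpoon}, and are the only substantial computations in the argument. By contrast, the antipode step is essentially free, thanks to Proposition \ref{prop:S-is-antipode}, which isolates exactly the convolution-invertibility already proven; this is why the real work lies entirely in the bialgebra-in-${}^H_H\mathcal{YD}$ verification rather than in producing the antipode.
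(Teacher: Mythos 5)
Your proposal is correct and takes essentially the same route as the paper's proof: part (ii) is delegated to Proposition \ref{proposition:algebra}, and part (i) assembles Lemma \ref{rbraiding}, the $\rightharpoonup$-linearity and $\mathrm{Ad}_L$-colinearity checks for the structure maps, the braided bialgebra compatibility via Lemma \ref{lem:deltadot}, and the antipode via Proposition \ref{prop:S-is-antipode}. The only cosmetic difference is that the paper channels the hardest step, colinearity of $m_\cdot$, through condition \eqref{compharpoonpbullet} of Lemma \ref{lemma:equiv-cond-colinearity} (reducing it to \eqref{quasiequivalent-to-colinear}), whereas you cite the matched-pair data and \eqref{T-on-harpoon} directly; since \eqref{compharpoonpbullet} is itself a consequence of $\leftharpoonup$ being a coalgebra morphism together with \eqref{braided-commutativity}, these are the same ingredients.
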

\begin{proof}
The compatibility \eqref{compHopfbraces} is satisfied by Proposition \ref{proposition:algebra}.

We first check that $\cdot$ is a morphism of coalgebras in $^{H}_{H}\mathcal{YD}$. Using Lemma \ref{lem:deltadot} we obtain:
\[
\begin{split}
    \Delta(a\cdot b)&=(a_{1}\bullet(T(a_{3})\rightharpoonup b_{1}))\ot(a_{2}\cdot b_{2})\\\overset{\eqref{compblulletdotharp}}&{=}(a_{1}\cdot(a_{2}\rightharpoonup(T(a_{4})\rightharpoonup b_{1})))\ot(a_{3}\cdot b_{2})\\&=a_{1}\cdot(a_{2}\bullet T(a_{4})\rightharpoonup b_{1})\ot(a_{3}\cdot b_{2}),
\end{split}
\]
i.e., $\Delta m_{\cdot}=(m_{\cdot}\ot m_{\cdot})(\id\ot\sigma^{\mathcal{YD}}_{H,H}\ot\id)(\Delta\ot\Delta)$. By Proposition \ref{proposition:algebra} we also know that $\epsilon(a\cdot b)=\epsilon(a)\epsilon(b)$. Together, they are the compatibility conditions of a bialgebra in $^{H}_{H}\mathcal{YD}$.

Our next step is proving that $(H,\Delta, \epsilon)$ is a coalgebra in $^H_H\mathcal{YD}$. We know that $\Delta$ and $\epsilon$ are left $H$-linear with respect to $\rightharpoonup$. We thereby prove that they are also left $H$-colinear. We compute
\begin{align*}
(\id\ot\Delta)\mathrm{Ad}_{L}(a)&=(\id\ot\Delta)(a_{1}\bullet T(a_{3})\otimes a_{2})\\&=a_{1}\bullet T(a_{4})\otimes a_{2}\otimes a_{3}\\&=(a_{1}\bullet T(a_{3})\bullet a_{4}\bullet T(a_{6}))\otimes a_{2}\otimes a_{5}\\&=(a_{1_{1}}\bullet T(a_{1_{3}}))\bullet(a_{2_{1}}\bullet T(a_{2_{3}}))\otimes a_{1_{2}}\otimes a_{2_{2}},
\end{align*}
and
\[
(\id\ot\epsilon)\mathrm{Ad}_{L}(a)=(a_{1}\bullet T(a_{3}))\otimes\epsilon(a_{2})=(a_{1}\bullet T(a_{2}))\otimes1_{\Bbbk}=\epsilon(a)1_{H}\otimes1_{\Bbbk}.
\]
This concludes the proof that $(H,\Delta,\epsilon)$ is a coalgebra in $^{H}_{H}\mathcal{YD}$.

Clearly, the unit $u\colon 1_\Bbbk\mapsto 1_H$ is left $H$-linear and left $H$-colinear, so in order to conclude that $(H,\cdot,1)$ is in $\mathrm{Mon}(^{H}_{H}\mathcal{YD})$ we only need to verify that $\cdot$ is left $H$-linear and left $H$-colinear.

First we show that $\cdot$ is left $H$-linear with respect to $\rightharpoonup$. Therefore, we compute
\begin{align*}
a\rightharpoonup(b\cdot c)\overset{\eqref{def-cdot}}&{=}a\rightharpoonup(b_{1}\bullet(T(b_{2})\rightharpoonup c))\\\overset{\eqref{matchedpair-3}}&{=}(a_{1}\rightharpoonup b_{1})\bullet((a_{2}\leftharpoonup b_{2})\rightharpoonup(T(b_{3})\rightharpoonup c))\\\overset{\eqref{compblulletdotharp}}&{=}(a_{1}\rightharpoonup b_{1})\cdot((a_{2}\rightharpoonup b_{2})\rightharpoonup((a_{3}\leftharpoonup b_{3})\rightharpoonup(T(b_{4})\rightharpoonup c))\\&=(a_{1}\rightharpoonup b_{1})\cdot(((a_{2}\rightharpoonup b_{2})\bullet(a_{3}\leftharpoonup b_{3}))\rightharpoonup(T(b_{4})\rightharpoonup c))\\\overset{\eqref{braided-commutativity}}&{=}(a_{1}\rightharpoonup b_{1})\cdot(a_{2}\bullet b_{2}\rightharpoonup(T(b_{3})\rightharpoonup c))\\&=(a_{1}\rightharpoonup b_{1})\cdot(a_{2}\bullet b_{2}\bullet T(b_{3})\rightharpoonup c))\\&=(a_{1}\rightharpoonup b)\cdot(a_{2}\rightharpoonup c).
\end{align*}
We now check that $\cdot$ is left $H$-colinear with respect to $\mathrm{Ad}_L$. We already know that \eqref{triple-delta} holds, i.e. 
\[
(\Delta\ot\id)\Delta(a\cdot b)=(a_{1}\bullet(T(a_{6})\rightharpoonup b_{1}))\otimes(a_{2}\bullet(T(a_{5})\rightharpoonup b_{2}))\otimes(a_{3}\bullet(T(b_{4})\rightharpoonup b_{3})).
\]
The request that $\cdot$ is colinear with respect to the adjoint coaction $\mathrm{Ad}_L$ reads as follows:
\begin{equation}\label{eq:bullet-colinear}
(a\cdot b)_{1}\bullet T((a\cdot b)_{3})\ot(a\cdot b)_{2}=a_{1}\bullet T(a_{3})\bullet b_{1}\bullet T(b_{3})\ot(a_{2}\cdot b_{2}).
\end{equation}
We now rephrase the condition \eqref{eq:bullet-colinear}. One has
\begin{align*}
    &\hspace{-2em}(a\cdot b)_{1}\bullet T((a\cdot b)_{3})\ot(a\cdot b)_{2}\\
    &=\bigg(a_1\bullet (T(a_6)\rightharpoonup b_1)\bullet T\Big(a_3\bullet (T(a_4)\rightharpoonup b_3) \Big)\bigg)\ot \Big(a_2\bullet (T(a_5)\rightharpoonup b_2)\Big)\\
    &= \bigg(a_1\bullet (T(a_6)\rightharpoonup b_1)\bullet T\Big(T(a_4)\rightharpoonup b_3 \Big)\bullet T(a_3)\bigg)\ot \Big(a_2\bullet (T(a_5)\rightharpoonup b_2)\Big)\\
    &= \bigg(a_1\bullet (T(a_6)\rightharpoonup b_1)\bullet T\Big(T(a_4)\rightharpoonup b_3 \Big)\bullet T(a_3)\bullet b_4\bullet T(b_5)\bigg)\ot \Big(a_2\bullet (T(a_5)\rightharpoonup b_2)\Big)\\ \overset{\eqref{rightaction-from-leftaction}}&{=} \Big(a_1\bullet (T(a_5)\rightharpoonup b_1)\bullet (T(a_3)\leftharpoonup b_3)\bullet T(b_4)\Big)\ot\Big(a_2\bullet (T(a_4)\rightharpoonup b_2)\Big).
\end{align*}
Therefore, \eqref{eq:bullet-colinear} becomes
\begin{equation}\label{eq:50}
\begin{split}&\Big(a_1\bullet (T(a_5)\rightharpoonup b_1)\bullet (T(a_3)\leftharpoonup b_3)\bullet T(b_4)\Big)\ot\Big( a_2\bullet (T(a_4)\rightharpoonup b_2)\Big)\\
&\hspace{2em}= \Big(a_1\bullet T(a_4)\bullet b_1\bullet T(b_3)\Big)\ot \Big( a_2\bullet (T(a_3)\rightharpoonup b_2) \Big).
\end{split}
\end{equation}
But now, using the antipode, we obtain
\begin{align*}
    \eqref{eq:50}\iff\;&\Big((T(a_4)\rightharpoonup b_1)\bullet (T(a_2)\leftharpoonup b_3)\Big)\ot\big( a_1\bullet (T(a_3)\rightharpoonup b_2)\Big)\\
    &= \big(T(a_3)\bullet b_1\big)\ot \Big( a_1\bullet (T(a_2)\rightharpoonup b_2) \Big)\\\iff\;&\Big((T(a_3)\rightharpoonup b_1)\bullet (T(a_1)\leftharpoonup b_3)\Big)\ot\Big(  T(a_2)\rightharpoonup b_2\Big)= \big(T(a_2)\bullet b_1\big)\ot \big( T(a_1)\rightharpoonup b_2 \big).
\end{align*}
Thus, we obtain that \eqref{eq:bullet-colinear} is equivalent to
\begin{equation}
    \label{quasiequivalent-to-colinear} (T(a_3)\rightharpoonup b_1)\bullet (T(a_1)\leftharpoonup b_3)\ot (T(a_2)\rightharpoonup b_2) = \big(T(a_2)\bullet b_1\big)\ot \big( T(a_1)\rightharpoonup b_2\big).
\end{equation}
The latter is implied by \eqref{compharpoonpbullet}, which holds true since $\leftharpoonup$ is a morphism of coalgebras (and is in fact equivalent to \eqref{compharpoonpbullet} when $T$ is bijective). This concludes the proof that $(H,\cdot, 1)$ is an algebra in $^H_H\mathcal{YD}$, and hence $(H,\cdot, 1,\Delta, \epsilon)$ is a bialgebra in $^H_H\mathcal{YD}$ for what already observed.

We already know from Proposition \ref{proposition:algebra} that $S$ satisfies $S(a_1)\cdot a_2 = \epsilon(a)1_{H}= a_1\cdot S(a_2)$, thus we only need to prove that $S$ is a morphism of Yetter--Drinfeld modules. This follows from Proposition \ref{prop:S-is-antipode}.
\end{proof}
\begin{definition}
    Given a matched pair $(H, \rightharpoonup, \leftharpoonup)$ on a Hopf algebra $(H,\bullet, 1, \Delta, \epsilon, T)$, the second structure $H^{\raisebox{1pt}{.}}$ from Theorem \ref{thm:matched-pair-yields-YDbrace} \textit{i.\@} will be called the \emph{transmutation} of $H^\bullet$ with respect to the matched pair.
\end{definition}
In Remark \ref{rem:transmutation}, we shall see that this is indeed a generalisation of Majid's transmutation \cite{Maj2}.
\begin{remark}
    By \cite[Remark 1.8]{Pointed} the Hopf algebra $(H,\cdot,1,\Delta, \epsilon, S)$ is, in particular, a braided Hopf algebra in the sense of Kharchenko \cite[\S$\,$2.2]{Kha}. If moreover $T$ is bijective, then the braiding $\sigma^\mathcal{YD}$ is bijective, hence $(H,\cdot,1,\Delta, \epsilon, S)$ is a braided Hopf algebra in the sense of Majid \cite{majid1995algebras}; see also \cite[Definition 5.1]{BraidedHopfalgebras}.
\end{remark}
\begin{remark}
    In case $H$ is cocommutative, $\sigma^{\mathcal{YD}}_{H,H}$ coincides with the flip $\tau$, hence $(H,\cdot,1,\Delta,\epsilon,S)$ becomes a standard Hopf algebra and we recover the definition of Hopf brace \cite{angiono2017hopf}. 
\end{remark}
%
%
\begin{corollary}\label{cor:bosonisation}
    Let $(H,\bullet,1,\Delta,\epsilon,T)$ be a Hopf algebra and $(H,\rightharpoonup,\leftharpoonup)$ a matched pair of actions. Define $\cdot$ as in \eqref{def-cdot}, and $S$ as in \eqref{def-S}. Then, the following is a bialgebra structure on $H\ot H$:
\begin{align*}
        &(a\ot h) \cdot_{\#} (a'\ot h') := a\cdot S(h_{1})\cdot (h_{2}\bullet a')\ot h_3\bullet h',\\
        &\Delta_{\#}(a\ot h):= a_1\ot a_2\bullet T(a_4)\bullet h_1\ot a_3\ot h_2,\\
&S_{\#}(a\ot h):=S\big(T(T(a_{3})\bullet h_{3})\big)\cdot\big(T(T(a_{4})\bullet h_{2})\bullet T(a_{2})\big)\ot T(a_{1}\bullet T(a_{5})\bullet h_{1}).\end{align*}
\end{corollary}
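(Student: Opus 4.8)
The plan is to recognise the stated structure as the Radford--Majid bosonisation (biproduct) of the Hopf algebra in Yetter--Drinfeld modules produced in Theorem \ref{thm:matched-pair-yields-YDbrace}. By that theorem, $B:=(H,\cdot,1,\Delta,\epsilon,S)$ is an object of $\mathrm{Hopf}({}^{H}_{H}\mathcal{YD})$, with left action $\rightharpoonup$ and left adjoint coaction $\mathrm{Ad}_L\colon a\mapsto a_1\bullet T(a_3)\otimes a_2$. The bosonisation theorem (see e.g.\@ \cite{Majid-book}) then guarantees that the biproduct $B\# H^\bullet$, carried by the vector space $B\otimes H=H\otimes H$, is a Hopf algebra, and in particular a bialgebra, with unit $1\otimes 1$ and counit $\epsilon\otimes\epsilon$. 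Its canonical structure maps are the smash product $(a\otimes h)(a'\otimes h')=a\cdot(h_1\rightharpoonup a')\otimes h_2\bullet h'$, the smash coproduct $\Delta(a\otimes h)=a_1\otimes (a_2)_{-1}\bullet h_1\otimes (a_2)_0\otimes h_2$, and the biproduct antipode $\mathcal{S}(a\otimes h)=(1\otimes T(a_{-1}\bullet h))\cdot_{\#}(S(a_0)\otimes 1)$. The entire task then reduces to identifying these canonical maps with the asserted $\cdot_{\#}$, $\Delta_{\#}$ and $S_{\#}$; crucially, associativity, coassociativity and the bialgebra compatibility come for free from the bosonisation theorem and need not be checked by hand.

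The product and the coproduct are immediate Sweedler substitutions. For the product I would insert the identity \eqref{leftantipode}, namely $h\rightharpoonup a'=S(h_1)\cdot(h_2\bullet a')$, into the smash product, turning $a\cdot(h_1\rightharpoonup a')\otimes h_2\bullet h'$ into $a\cdot S(h_1)\cdot(h_2\bullet a')\otimes h_3\bullet h'$, which is exactly $\cdot_{\#}$. For the coproduct I would compute $(a_2)_{-1}\otimes(a_2)_0=\mathrm{Ad}_L(a_2)=a_2\bullet T(a_4)\otimes a_3$ and read off $\Delta(a\otimes h)=a_1\otimes a_2\bullet T(a_4)\bullet h_1\otimes a_3\otimes h_2=\Delta_{\#}(a\otimes h)$.

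The substance of the proof is the antipode. Expanding the biproduct antipode by means of $\cdot_{\#}$ gives $\mathcal{S}(a\otimes h)=S(\tilde h_1)\cdot(\tilde h_2\bullet S(a_0))\otimes \tilde h_3$ with $\tilde h=T(a_{-1}\bullet h)$; to evaluate this one must iterate the coaction, computing $(\Delta^{(2)}\otimes\id)\mathrm{Ad}_L(a)=(a_1\bullet T(a_7))\otimes(a_2\bullet T(a_6))\otimes(a_3\bullet T(a_5))\otimes a_4$, and then use that $T$ is an anti-algebra and anti-coalgebra endomorphism to distribute both the comultiplications and the outer $T$. This produces a raw expression involving seven tensor legs of $a$, which I would then collapse to the five-leg form of $S_{\#}$ by invoking the matched-pair compatibilities---chiefly \eqref{rightaction-from-leftaction} and \eqref{T-on-harpoon}, together with the colinearity identity \eqref{compharpoonpbullet} of Lemma \ref{lemma:equiv-cond-colinearity}---and the defining relations \eqref{def-cdot}, \eqref{def-S} to recombine the factor $S(a_0)$ with the nested antipodes. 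This recombination, matching $S(\tilde h_1)\cdot(\tilde h_2\bullet S(a_0))\otimes\tilde h_3$ against $S(T(T(a_3)\bullet h_3))\cdot(T(T(a_4)\bullet h_2)\bullet T(a_2))\otimes T(a_1\bullet T(a_5)\bullet h_1)$, is the one genuinely laborious step and the main obstacle.

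Since the antipode of a Hopf algebra is unique, I would keep in reserve a less delicate route that sidesteps the seven-leg simplification: namely, once the bialgebra $(H\otimes H,\cdot_{\#},\Delta_{\#},1\otimes 1,\epsilon\otimes\epsilon)$ is in place, directly verify that the explicitly stated $S_{\#}$ is a one-sided convolution inverse of $\id_{H\otimes H}$, i.e.\@ $m_{\#}(S_{\#}\otimes\id)\Delta_{\#}=u_{\#}\epsilon_{\#}$, using \eqref{triple-delta} and the same matched-pair relations. By uniqueness of the antipode in the Hopf algebra $B\# H^\bullet$ this identifies $S_{\#}$ with the biproduct antipode without ever passing through the unsimplified expression.
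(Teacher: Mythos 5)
Your proposal is correct and takes essentially the same route as the paper: the paper likewise invokes the bosonisation $H^{\raisebox{1pt}{.}}\#H^{\bullet}$ of the Hopf algebra in ${}^{H}_{H}\mathcal{YD}$ produced by Theorem \ref{thm:matched-pair-yields-YDbrace} (citing Heckenberger--Schneider for the smash product, smash coproduct, and the biproduct antipode $S_{\#}(a\ot h)=\big(T(a_{-1}\bullet h_{2})\rightharpoonup S(a_{0})\big)\ot T(a_{-2}\bullet h_{1})$), so that associativity, coassociativity and compatibility indeed come for free, and the only work is matching the canonical maps with the stated formulas via \eqref{leftantipode} and $\mathrm{Ad}_{L}$. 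The one organisational difference is the antipode: by expanding $\rightharpoonup$ through \eqref{leftantipode} at the outset you arrive at the seven-leg expression you flag as the main obstacle, whereas the paper keeps the action unexpanded, iterates $\mathrm{Ad}_{L}$ to only five legs, substitutes $S(a_{0})$ by \eqref{def-S}, and collapses $T(a_{2})\rightharpoonup(a_{3}\rightharpoonup T(a_{4}))=(T(a_{2})\bullet a_{3})\rightharpoonup T(a_{4})=\epsilon(a_{2})T(a_{3})$ by the action axiom before applying \eqref{leftantipode} a single time at the end---a five-line computation that needs none of \eqref{rightaction-from-leftaction}, \eqref{T-on-harpoon} or \eqref{compharpoonpbullet}, so your anticipated laborious step in fact evaporates. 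Your reserve argument is also sound: since the biproduct is already known to be a Hopf algebra, $\id_{H\ot H}$ is convolution invertible, and hence a one-sided convolution inverse necessarily coincides with the antipode.
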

\begin{proof}
    Let $H^\bullet:=(H,\bullet)$ and $H^{\raisebox{1pt}{.}}:=(H,\cdot)$ denote the two multiplicative structures. By Theorem \ref{thm:matched-pair-yields-YDbrace} we have that $(H,\cdot,1,\Delta,\epsilon)$ is in $\mathrm{Bimon}({}^{H}_{H}\mathcal{YD})$, thus the bosonisation $(H^{\raisebox{1pt}{.}}\#H^{\bullet},\cdot_{\#},1\otimes1,\Delta_{\#},\epsilon_{H}\ot\epsilon_{H})$ is a bialgebra, where $(a\ot h)\cdot_{\#}(a'\ot h'):=a\cdot(h_{1}\rightharpoonup a')\ot h_{2}\bullet h'$ and $\Delta_{\#}(a\ot h):=a_{1}\ot a_{2_{-1}}\bullet h_{1}\ot a_{2_{0}}\ot h_{2}$ denote the smash product and the smash coproduct, respectively, see e.g. Heckenberger and Schneider \cite[Proposition 3.8.4]{HecSch}. Moreover, since $H^{\bullet}$ and $H^{\raisebox{1pt}{.}}$ are both Hopf algebras, we obtain an antipode on $H^{\raisebox{1pt}{.}}\#H^{\bullet}$ given by $S_{\#}(a\ot h):=\big(T(a_{-1}\bullet h_{2})\rightharpoonup S(a_{0})\big)\ot T(a_{-2}\bullet h_{1})$; see e.g.\@ \cite[Theorem 3.8.10]{HecSch}. Thus, we compute
\begin{align*}
    S_{\#}(a\ot h)&=\big(T(a_{-1}\bullet h_{2})\rightharpoonup S(a_{0})\big)\ot T(a_{-2}\bullet h_{1})\\&=\big(T(a_{2}\bullet T(a_{4})\bullet h_{2})\rightharpoonup S(a_{3})\big)\ot T(a_{1}\bullet T(a_{5})\bullet h_{1})\\&=\big(T(T(a_{5})\bullet h_2)\bullet T(a_{2})\rightharpoonup(a_{3}\rightharpoonup T(a_{4}))\big)\ot T(a_{1}\bullet T(a_{6})\bullet h_{1})\\&=\big(T(T(a_{3})\bullet h_{2})\rightharpoonup T(a_{2})\big)\ot T(a_{1}\bullet T(a_{4})\bullet h_{1})\\&=S\big(T(T(a_{3})\bullet h_{3})\big)\cdot\big(T(T(a_{4})\bullet h_{2})\bullet T(a_{2})\big)\ot T(a_{1}\bullet T(a_{5})\bullet h_{1}).
\end{align*}
\end{proof}

Theorem \ref{thm:matched-pair-yields-YDbrace} suggests us the following definition:
\begin{definition}\label{def:YDbrace}
    A \emph{Yetter--Drinfeld brace} (or \emph{$\mathcal{YD}$-brace}) $(H, \cdot, \bullet,1,\Delta,\epsilon,S,T)$ is the datum of a Hopf algebra $H^\bullet = (H,\bullet, 1,\Delta, \epsilon, T)$, a second operation $\cdot$ on $H$, and a linear map $S\colon H\to H$ such that:
    \begin{enumerate}
        \item[\textit{i.}]$(H,\cdot,1,\Delta,\epsilon,S)$ is in $\mathrm{Hopf}({}^{H^\bullet}_{H^\bullet}\mathcal{YD})$ with the action $\rightharpoonup$ defined by $a\rightharpoonup b := S(a_1)\cdot (a_2\bullet b)$, and the coaction given by $\mathrm{Ad}_L$; 
        \item[\textit{ii.}] if we define $a\leftharpoonup b:= T(a_1\rightharpoonup b_1)\bullet a_2\bullet b_2$, the two maps $\rightharpoonup, \leftharpoonup$ satisfy \eqref{matchedpair5};
        \item[\textit{iii.}]the two operations $\bullet$ and $\cdot$ satisfy the Hopf brace compatibility \eqref{compHopfbraces}.
         \end{enumerate}
         Given two Yetter--Drinfeld braces $H$ and $K$, a \emph{morphism of Yetter--Drinfeld braces} is a map $f\colon H\to K$ that is a morphism in $\mathrm{Hopf}(\mathsf{Vec}_\Bbbk)$ between the respective Hopf algebras in $\mathsf{Vec}_\Bbbk$, and satisfies
         \[f(a\cdot_H b) = f(a)\cdot_K f(b).\]In particular, this implies $S_{K}f=fS_{H}$, $f\rightharpoonup_{H}\;=\;\rightharpoonup_{K}\!(f\otimes f)$ and $(\mathrm{Ad}_{L})_Kf=(f\ot f)(\mathrm{Ad}_{L})_H$. We denote the category of Yetter--Drinfeld braces by $\mathcal{YD}\mathrm{Br}(\mathsf{Vec}_\Bbbk)$.
\end{definition}
\begin{remark}
    Let $f\colon H\to K$ be a morphism of Yetter--Drinfeld braces. Since $f\colon H^{\bullet}\to K^{\bullet}$ is a morphism of bialgebras, one can regard $K^{\raisebox{1pt}{.}}$ as a left $H^{\bullet}$-module through $f$, so that the condition $f(a\rightharpoonup_{H} b)=f(a)\rightharpoonup_{K}f(b)$ means that $f$ is a morphism in $_{H^{\bullet}}\mm$; and $H^{\raisebox{1pt}{.}}$ as a left $K^{\bullet}$-comodule through $f$, so that the condition $(\mathrm{Ad}_{L})_{K}f=(f\otimes f)(\mathrm{Ad}_{L})_{H}$ means that $f$ is a morphism in $^{K^{\bullet}}\mm$. One can consider the category $^{K^{\bullet}}_{H^{\bullet}}\mathcal{YD}$ of \textit{relative} Yetter--Drinfeld modules; see \cite[\S$\,$4.4, p.\@ 183]{CSP}. Clearly $H^{\raisebox{1pt}{.}}$ and $K^{\raisebox{1pt}{.}}$ are in $^{K^{\bullet}}_{H^{\bullet}}\mathcal{YD}$. Indeed, given $a,b\in H$, we compute:
\begin{align*}
(a_{1}\rightharpoonup_{H} b)^K_{-1}\bullet_{K} f(a_{2})\ot(a_{1}\rightharpoonup_{H} b)^{K}_{0}&=f((a_{1}\rightharpoonup_{H}b)_{-1}\bullet_{H}a_{2})\otimes(a_{1}\rightharpoonup_{H}b)_{0}\\&=f(a_{1}\bullet_{H} b_{-1})\otimes(a_{2}\rightharpoonup_{H}b_{0})\\&=f(a_{1})\bullet_{K} b_{-1}^{K}\otimes(a_{2}\rightharpoonup_{H} b_{0}^K),
\end{align*}
where $a_{-1}\ot a_0 = (\mathrm{Ad}_L)_H(a)$ is the usual adjoint $H^\bullet$-coaction on $H$, while $a^K_{-1}\ot a^K_0= (f\ot \id)(\mathrm{Ad}_L)_H(a)$ denotes the $K^\bullet$-coaction on $H^{\raisebox{1pt}{.}}$ induced by $\mathrm{Ad}_L$ through $f$. The Yetter--Drinfeld compatibility for $K^{\raisebox{1pt}{.}}$ is deduced analogously. Hence we obtain that $f$ is a morphism in $^{K^{\bullet}}_{H^{\bullet}}\mathcal{YD}$. Moreover, one clearly has that $H^{\raisebox{1pt}{.}}$ and $K^{\raisebox{1pt}{.}}$ are in $\mathrm{Mon}(^{K^{\bullet}}_{H^{\bullet}}\mathcal{YD})$ and $\mathrm{Comon}(^{K^{\bullet}}_{H^{\bullet}}\mathcal{YD})$; thus the condition $f(a\cdot_{H}b)=f(a)\cdot_{K}f(b)$ means that $f:H^{\raisebox{1pt}{.}}\to K^{\raisebox{1pt}{.}}$ is a morphism in $\mathrm{Mon}(^{K^{\bullet}}_{H^{\bullet}}\mathcal{YD})$. Notice that $f $ is automatically a morphism in $\mathrm{Comon}(^{K^{\bullet}}_{H^{\bullet}}\mathcal{YD})$.
\end{remark}

From Theorem \ref{thm:matched-pair-yields-YDbrace}, we obtain:
\begin{corollary}\label{cor:Hyetterdrinfeldbrace}
    Let $(H,\bullet,1,\Delta,\epsilon,T)$ be a Hopf algebra, $(H,\rightharpoonup,\leftharpoonup)$ a matched pair of actions. Define $\cdot$ and $S$ as in \eqref{def-cdot}, \eqref{def-S}. Then, $(H,\cdot,\bullet,1,\Delta,\epsilon,S,T)$ is a Yetter--Drinfeld brace.
\end{corollary}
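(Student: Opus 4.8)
The plan is to verify the three defining conditions of Definition \ref{def:YDbrace} one at a time, assembling the pieces already provided by Theorem \ref{thm:matched-pair-yields-YDbrace} together with the identifications of the two actions established in Proposition \ref{proposition:algebra} and the Remark surrounding \eqref{braided-commutativity}. The only substantive preliminary point is that the actions $\rightharpoonup,\leftharpoonup$ named in Definition \ref{def:YDbrace}—which are \emph{prescribed} in terms of $\cdot,\bullet,S,T$—coincide with the actions of the matched pair we started from.

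First I would check this compatibility of notation. For the left action, Definition \ref{def:YDbrace} sets $a\rightharpoonup b:=S(a_1)\cdot(a_2\bullet b)$, and by \eqref{leftantipode} in Proposition \ref{proposition:algebra} this equals the matched-pair left action. For the right action, the definition sets $a\leftharpoonup b:=T(a_1\rightharpoonup b_1)\bullet a_2\bullet b_2$; substituting the (now identified) left action and invoking \eqref{rightaction-from-leftaction}, this is exactly the matched-pair right action $\leftharpoonup$. Thus the structure maps synthesised inside the definition are literally the data $(\rightharpoonup,\leftharpoonup)$ with which we began, and in particular the action and coaction making $(H,\cdot,1,\Delta,\epsilon,S)$ an object of $\mathrm{Hopf}({}^H_H\mathcal{YD})$ in the theorem are the left action $\rightharpoonup$ and the left adjoint coaction $\mathrm{Ad}_L$ named in the definition.

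With this identification in hand, condition \textit{i.} of Definition \ref{def:YDbrace} is precisely part \textit{i.} of Theorem \ref{thm:matched-pair-yields-YDbrace}, and condition \textit{iii.} is precisely part \textit{ii.} of that theorem. For condition \textit{ii.}, I must show that $\rightharpoonup,\leftharpoonup$ satisfy \eqref{matchedpair5}; since $(H,\rightharpoonup,\leftharpoonup)$ is a matched pair of actions, this is exactly the content of the Corollary following Lemma \ref{lemma:equiv-cond-colinearity}, asserting that every matched pair of actions is a matched pair $(H,H)$, equivalently that \eqref{matchedpair5} holds. As there is no genuine computation left, no real obstacle arises: the only care required is the bookkeeping check of the previous paragraph, after which the statement is an immediate repackaging of the earlier results.
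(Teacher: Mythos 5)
Your proposal is correct and follows exactly the route the paper intends: the corollary is stated there as an immediate consequence of Theorem \ref{thm:matched-pair-yields-YDbrace}, with Definition \ref{def:YDbrace} tailored so that conditions \textit{i.} and \textit{iii.} match the theorem's two parts and condition \textit{ii.} follows from Lemma \ref{lemma:equiv-cond-colinearity} and its corollary. Your explicit bookkeeping—identifying the prescribed actions with the matched-pair actions via \eqref{leftantipode} and \eqref{rightaction-from-leftaction}—is precisely the (unwritten) verification the paper relies on.
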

\begin{remark}\label{rem:leftharpcoalg} Define $\leftharpoonup$ as in Definition \ref{def:YDbrace} \emph{ii}. Since $\rightharpoonup$ is a morphism of coalgebras, it is clear that $\rightharpoonup, \leftharpoonup$ satisfy \eqref{braided-commutativity}:
\[
(a_1\rightharpoonup b_1)\bullet (a_2\leftharpoonup b_2)=(a_1\rightharpoonup b_1)\bullet T(a_{2}\rightharpoonup b_{2})\bullet a_{3}\bullet b_{3}=\epsilon(a_{1}\rightharpoonup b_{1})a_{2}\bullet b_{2}=a\bullet b.
\]
Observe that $\epsilon (a\leftharpoonup b) = \epsilon(a)\epsilon(b)$, thus we are in the hypotheses of Lemma \ref{lemma:equiv-cond-colinearity}. Consequently, Definition \ref{def:YDbrace} \emph{ii.\@} is equivalent to the request that $\leftharpoonup$ is a morphism of coalgebras.
\end{remark}
\begin{remark}
    Notice that Definition \ref{def:YDbrace} makes as much sense when we replace $\mathsf{Vec}_\Bbbk$ with a braided monoidal category. Although this exceeds the scope of this work, we may expect several of our results in $\mathsf{Vec}_\Bbbk$ to hold more in general. Furthermore, Definition \ref{def:YDbrace} can be dualised to define \emph{Yetter--Drinfeld cobraces}, leading to a generalisation of commutative Hopf cobraces \cite{angiono2017hopf}. 
\end{remark}
Clearly, in a Yetter--Drinfeld brace, the map $\rightharpoonup$ defined as in Definition \ref{def:YDbrace} \emph{i.\@} is a left $H^{\bullet}$-action, and $(H,\cdot,1)$ is a left $H^{\bullet}$-module algebra. In fact, this is true under weaker assumptions:
\begin{lemma}\label{lem:rightharpoon}
    Let $H^\bullet=(H,\bullet,1,\Delta,\epsilon)$ be a bialgebra and $(H,\cdot,1,\Delta,\epsilon)$ be an algebra and a coalgebra, endowed with a map $S\colon H\to H$ satisfying $S(a_{1})\cdot a_{2}=\epsilon(a)1=a_{1}\cdot S(a_{2})$. Suppose that the Hopf brace compatibility \eqref{compHopfbraces} between $\bullet $ and $\cdot$ is satisfied. Then
\[
a\rightharpoonup b:=S(a_{1})\cdot(a_{2}\bullet b)
\]
is a left $H^{\bullet}$-action on $H$. Moreover, $(H,\cdot,1)$ is a left $H^{\bullet}$-module algebra and, if $\epsilon$ is a morphism of algebras with respect to $\cdot$, then $\epsilon$ is left $H^{\bullet}$-linear with respect to $\rightharpoonup$.
\end{lemma}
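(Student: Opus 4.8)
The plan is to verify the three assertions of Lemma \ref{lem:rightharpoon} directly from the Hopf brace compatibility \eqref{compHopfbraces} and the convolution-inverse property of $S$, without appealing to the stronger hypotheses of a full matched pair. First I would record the key rewriting of the compatibility: since $a\rightharpoonup b = S(a_1)\cdot(a_2\bullet b)$, the compatibility \eqref{compHopfbraces} in the form $a\bullet(b\cdot c)=(a_1\bullet b)\cdot S(a_2)\cdot(a_3\bullet c)$ can be read as $a\bullet(b\cdot c)=(a_1\bullet b)\cdot(a_2\rightharpoonup c)$, which will be the workhorse identity.

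To show $\rightharpoonup$ is a left $H^\bullet$-action, I would check the unit axiom $1\rightharpoonup b = S(1)\cdot(1\bullet b)=1\cdot b=b$ (using $S(1)=1$, which follows from $S(1_1)\cdot 1_2=\epsilon(1)1$), and then the associativity $a\rightharpoonup(b\rightharpoonup c)=(a\bullet b)\rightharpoonup c$. For the latter I would expand $a\rightharpoonup(b\rightharpoonup c)=S(a_1)\cdot\big(a_2\bullet(S(b_1)\cdot(b_2\bullet c))\big)$ and apply the compatibility identity to the inner product $a_2\bullet(S(b_1)\cdot(b_2\bullet c))$, rewriting it as $(a_2\bullet S(b_1))\cdot(a_3\rightharpoonup(b_2\bullet c))$; then I would telescope using $S(a_1)\cdot a_2\bullet(\cdots)=\epsilon(a)(\cdots)$-type cancellations to collapse the expression to $S(a_1\bullet b_1)\cdot(a_2\bullet b_2\bullet c)=(a\bullet b)\rightharpoonup c$. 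The bookkeeping of Sweedler indices and the repeated use of the antipode axioms for $S$ with respect to $\cdot$ is the main obstacle, but it is purely formal.

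For the module-algebra property I must verify that $\rightharpoonup$ distributes over $\cdot$, i.e. $a\rightharpoonup(b\cdot c)=(a_1\rightharpoonup b)\cdot(a_2\rightharpoonup c)$, together with $a\rightharpoonup 1=\epsilon(a)1$. The unit condition is immediate: $a\rightharpoonup 1=S(a_1)\cdot(a_2\bullet 1)=S(a_1)\cdot a_2=\epsilon(a)1$. For the distributivity I would again deploy the compatibility identity $a\bullet(x\cdot y)=(a_1\bullet x)\cdot(a_2\rightharpoonup y)$, expanding $a\rightharpoonup(b\cdot c)=S(a_1)\cdot\big(a_2\bullet(b\cdot c)\big)=S(a_1)\cdot(a_2\bullet b)\cdot(a_3\rightharpoonup c)$, and then recognising $S(a_1)\cdot(a_2\bullet b)$ as $a_1\rightharpoonup b$ after splitting the coproduct appropriately; the coassociativity of $\Delta$ and the fact that $\rightharpoonup$ is itself a morphism of coalgebras (so that the grouping of legs is consistent) close the computation.

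Finally, for the colinearity of $\epsilon$ with respect to $\rightharpoonup$, I would simply compute $\epsilon(a\rightharpoonup b)=\epsilon\big(S(a_1)\cdot(a_2\bullet b)\big)$; using that $\epsilon$ is multiplicative for $\cdot$ by hypothesis and that $\epsilon S=\epsilon$ (which follows from applying $\epsilon$ to $S(a_1)\cdot a_2=\epsilon(a)1$), this reduces to $\epsilon(a_1)\epsilon(a_2)\epsilon(b)=\epsilon(a)\epsilon(b)$, exactly the statement that $\epsilon$ is left $H^\bullet$-linear. The whole proof thus rests on a single algebraic reformulation of \eqref{compHopfbraces} plus systematic antipode cancellations, and the only delicate point is keeping the Sweedler indices aligned across the several applications of that reformulation.
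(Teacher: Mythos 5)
Your proposal follows the paper's proof almost line by line, but the one step you dismiss as ``purely formal'' is exactly where the only nontrivial idea of the proof lives, and as you describe it the step would fail. After applying the compatibility to the inner product you arrive (as the paper does) at
$S(a_{1})\cdot(a_{2}\bullet S(b_{1}))\cdot S(a_{3})\cdot(a_{4}\bullet b_{2}\bullet c)$,
and you propose to collapse this by ``$S(a_{1})\cdot a_{2}\bullet(\cdots)=\epsilon(a)(\cdots)$-type cancellations''. No such cancellation is available: the factor sandwiched between $S(a_{1})$ and $S(a_{3})$ is $a_{2}\bullet S(b_{1})$, not $a_{2}$, and in this setting $\cdot$ is not commutative, $S$ is not assumed anti-multiplicative, and there is no Hopf-algebra structure in $\mathsf{Vec}_\Bbbk$ to appeal to. What is actually needed is the auxiliary identity $S(a_{1}\bullet b)\cdot a_{2}=S(a_{1})\cdot(a_{2}\bullet S(b))$ --- the paper's step $(\dagger)$, the analogue of \cite[Lemma 1.7]{angiono2017hopf} --- which converts $S(a_{1})\cdot(a_{2}\bullet S(b_{1}))$ into $S(a_{1}\bullet b_{1})\cdot a_{2}$, after which the genuine antipode cancellation $a_{2}\cdot S(a_{3})=\epsilon(a_{2})1$ finishes the computation. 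This identity is not a formal consequence of the antipode axioms: it requires a \emph{separate} application of \eqref{compHopfbraces}, e.g.\@ by evaluating $S(a_{1}\bullet b_{1})\cdot\big(a_{2}\bullet(b_{2}\cdot S(b_{3}))\big)$ in two ways, once via $b_{2}\cdot S(b_{3})=\epsilon(b_{2})1$ and once by expanding with \eqref{compHopfbraces} and using $\Delta(a\bullet b)=(a_{1}\bullet b_{1})\otimes(a_{2}\bullet b_{2})$ together with $S(a_{1}\bullet b_{1})\cdot(a_{2}\bullet b_{2})=\epsilon(a)\epsilon(b)1$. Until you supply this lemma, the action axiom $(a\bullet b)\rightharpoonup c=a\rightharpoonup(b\rightharpoonup c)$ is not proven.

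Everything else in your plan is correct and coincides with the paper's argument: the workhorse rewriting $a\bullet(b\cdot c)=(a_{1}\bullet b)\cdot(a_{2}\rightharpoonup c)$, the unit axiom via $S(1)=1$, the module-algebra distributivity $a\rightharpoonup(b\cdot c)=(a_{1}\rightharpoonup b)\cdot(a_{2}\rightharpoonup c)$ (which indeed needs only coassociativity and Sweedler renumbering), $a\rightharpoonup 1=\epsilon(a)1$, and the $\epsilon$-linearity; your explicit derivation of $\epsilon S=\epsilon$ from the hypothesis that $\epsilon$ is multiplicative for $\cdot$ is a detail the paper leaves implicit, and it is welcome. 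One small caution: in the distributivity step you invoke ``the fact that $\rightharpoonup$ is itself a morphism of coalgebras''; this is neither a hypothesis of the lemma nor needed for that computation, so it should be deleted rather than relied upon.
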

\begin{proof}
    First we compute $1\rightharpoonup b=S(1)\cdot(1\bullet b)=b$. Moreover, we have
\begin{align*}
    a\rightharpoonup(b\rightharpoonup c)&=a\rightharpoonup(S(b_{1})\cdot(b_{2}\bullet c))\\&=S(a_{1})\cdot\Big(a_{2}\bullet\big(S(b_{1})\cdot(b_{2}\bullet c)\big)\Big)\\&=S(a_{1})\cdot(a_{2}\bullet S(b_{1}))\cdot S(a_{3})\cdot(a_{4}\bullet b_{2}\bullet c)\\\overset{(\dagger)}&{=}S(a_{1}\bullet b_{1})\cdot a_{2}\cdot S(a_{3})\cdot(a_{4}\bullet b_{2}\bullet c)\\&=S(a_{1}\bullet b_{1})\cdot (a_{2}\bullet b_{2}\bullet c)\\&=(a\bullet b)\rightharpoonup c,
\end{align*}
where the equality marked with $(\dagger)$ follows from the relation $S(a_{1}\bullet b)\cdot a_{2}=S(a_{1})\cdot(a_{2}\bullet S(b))$, whose proof is the same as in \cite[Lemma 1.7]{angiono2017hopf} and simply follows from \eqref{compHopfbraces}. Moreover, we have
\[
a\rightharpoonup(b\cdot c)=S(a_{1})\cdot(a_{2}\bullet(b\cdot c))=S(a_{1})\cdot(a_{2}\bullet b)\cdot S(a_{3})\cdot (a_{4}\bullet c)=(a_{1}\rightharpoonup b)\cdot(a_{2}\rightharpoonup c)
\]
and \begin{equation}\label{eq:matchedpair3}a\rightharpoonup1=S(a_{1})\cdot(a_{2}\bullet1)=S(a_{1})\cdot a_{2}=\epsilon(a)1,\end{equation} 
hence $(H,\cdot,1)$ is a left $H^{\bullet}$-module algebra. Finally, we have
\[
\epsilon(a\rightharpoonup b)=\epsilon(S(a_{1})\cdot(a_{2}\bullet b))\overset{(\ddagger)}{=}\epsilon(a_{1})\epsilon(a_{2})\epsilon(b)=\epsilon(a)\epsilon(b),
\]
where $(\ddagger)$ follows from the fact that $\epsilon$ is a morphism of algebras with respect to $\cdot$.
\end{proof}

\subsection{A general example} We now construct an easy example of a Yetter--Drinfeld brace on a Hopf algebra $H$. This construction depends on the assumption that the adjoint action on $H$ is compatible with the comultiplication.  
\begin{lemma}\label{matched-pairs-from-adjoint-action}
    Let $H^{\bullet}:=(H,\bullet,1,\Delta,\epsilon,T)$ be a Hopf algebra, $\rightharpoonup$ the left adjoint action $a\rightharpoonup b:=a_{1}\bullet b\bullet T(a_{2})$, and $\leftharpoonup$ the trivial action. Then, $(H,\rightharpoonup,\leftharpoonup)$ is a matched pair of actions if and only if $\Delta$ is left linear with respect to $\rightharpoonup$.
\end{lemma}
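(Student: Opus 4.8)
The plan is to observe that, for the left adjoint action $\rightharpoonup$ paired with the trivial right action $\leftharpoonup$, \emph{all} of the defining conditions of a matched pair of actions (Definition \ref{def:MPA}) hold automatically over an arbitrary Hopf algebra $H^\bullet$, with the single exception of the coalgebra-linearity of $\rightharpoonup$ with respect to $\Delta$. Once this is established, the asserted equivalence is immediate, since that missing condition is precisely the hypothesis that $\Delta$ be left linear.

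The forward implication requires no computation: if $(H,\rightharpoonup,\leftharpoonup)$ is a matched pair of actions then, by definition, $H$ is a left $H$-module coalgebra via $\rightharpoonup$, whence $\Delta(a\rightharpoonup b)=(a_1\rightharpoonup b_1)\ot(a_2\rightharpoonup b_2)$, i.e.\@ $\Delta$ is left linear. For the converse I would first recall the standard facts that the adjoint action is always a left action---indeed $1\rightharpoonup b=b$ and $a\rightharpoonup(b\rightharpoonup c)=a_1\bullet b_1\bullet c\bullet T(b_2)\bullet T(a_2)=(a\bullet b)\rightharpoonup c$, using multiplicativity of $\Delta$ and anti-multiplicativity of $T$---and that the trivial action is always a right action. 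The counit compatibilities $\epsilon(a\rightharpoonup b)=\epsilon(a)\epsilon(b)$ (from $\epsilon T=\epsilon$) and $\epsilon(a\leftharpoonup b)=\epsilon(a)\epsilon(b)$ (from triviality) hold unconditionally; combined with the hypothesis on $\Delta$ this makes $H$ a left $H$-module coalgebra, while the trivial right action makes $H$ a right $H$-module coalgebra with no assumption whatsoever.

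It then remains to verify \eqref{matchedpair-1}--\eqref{matchedpair-4} and \eqref{braided-commutativity}, and the point I wish to emphasise---rather than a genuine obstacle---is that each follows purely from the antipode axioms $T(a_1)\bullet a_2=\epsilon(a)1=a_1\bullet T(a_2)$, \emph{never} invoking $\Delta$-linearity of $\rightharpoonup$. Thus \eqref{matchedpair-1} is $a\rightharpoonup 1=a_1\bullet T(a_2)=\epsilon(a)1$; \eqref{matchedpair-2} is immediate from triviality of $\leftharpoonup$; in \eqref{matchedpair-3} the trivial $\leftharpoonup$ collapses the right-hand side to $(a_1\rightharpoonup b)\bullet(a_2\rightharpoonup c)$, which equals $a\rightharpoonup(b\bullet c)=a_1\bullet b\bullet c\bullet T(a_2)$ after cancelling $T(a_2)\bullet a_3=\epsilon(a_2)1$; \eqref{matchedpair-4} reduces analogously, both sides collapsing to $(a\bullet b)\epsilon(c)$; and \eqref{braided-commutativity} follows from $(a_1\rightharpoonup b)\bullet a_2=a_1\bullet b\bullet T(a_2)\bullet a_3=a\bullet b$. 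Since all of these are automatic, the matched-pair property is equivalent to the sole remaining left $H$-module coalgebra condition on $\rightharpoonup$, namely the left linearity of $\Delta$. The only mild care needed is in \eqref{matchedpair-3}, to confirm that the adjoint action genuinely distributes over $\bullet$ via the antipode cancellation, without recourse to cocommutativity.
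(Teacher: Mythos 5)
Your proposal is correct and takes essentially the same approach as the paper: both reduce the matched-pair property to the left $H$-module coalgebra condition on $\rightharpoonup$ by checking that \eqref{matchedpair-1}--\eqref{matchedpair-4} and \eqref{braided-commutativity} hold unconditionally through antipode cancellations, and that the counit condition $\epsilon(a\rightharpoonup b)=\epsilon(a)\epsilon(b)$ is automatic, so only the $\Delta$-linearity of $\rightharpoonup$ can fail. The paper's proof is identical in substance, merely taking for granted the standard facts you verify explicitly (that the adjoint action is a left action and that the trivial action makes $H^\bullet$ a right $H$-module coalgebra).
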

\begin{proof}
Notice that $H^{\bullet}$ is automatically a right $H$-module coalgebra. We verify \eqref{matchedpair-1}--\eqref{matchedpair-4} and \eqref{braided-commutativity}. Clearly $a\rightharpoonup1=\epsilon(a)1$ holds, while \eqref{matchedpair-2} is automatically true. Moreover, we compute
\[
(a_{1}\rightharpoonup b_{1})\bullet((a_{2}\leftharpoonup b_{2})\rightharpoonup c)=a_{1}\bullet b\bullet T(a_{2})\bullet(a_{3}\rightharpoonup c)=a_{1}\bullet b\bullet T(a_{2})\bullet a_{3}\bullet c\bullet T(a_{4})=a\rightharpoonup(b\bullet c),
\]
and
\[
(a\leftharpoonup(b_{1}\rightharpoonup c_{1}))\bullet(b_{2}\leftharpoonup c_{2})=a\,\epsilon(b_{1}\rightharpoonup c_{1})\bullet b_{2}\,\epsilon(c_{2})=(a\bullet b)\leftharpoonup c.
\]
Finally, 
\[
(a_{1}\rightharpoonup b_{1})\bullet(a_{2}\leftharpoonup b_{2})=a_{1}\bullet b_{1}\bullet T(a_{2})\bullet a_{3}\epsilon(b_{2})=a\bullet b.
\]
Thus, $(H,\rightharpoonup,\leftharpoonup)$ is a matched pair of actions if and only if $H^{\bullet}$ is a left $H$-module coalgebra. Since $\epsilon(a\rightharpoonup b)=\epsilon(a)\epsilon(b)$ is automatically satisfied, $H^\bullet$ is a left $H$-module coalgebra if and only if $\Delta$ is left linear with respect to $\rightharpoonup$; as desired.
\end{proof}

\begin{remark}
The linearity condition of $\Delta$ with respect to $\rightharpoonup$, i.e. $\Delta(a\rightharpoonup b)=(a_{1}\rightharpoonup b_{1})\otimes(a_{2}\rightharpoonup b_{2})$, explicitly reads:
\[
a_{1}\bullet b_{1}\bullet T(a_{4})\ot a_{2}\bullet b_{2}\bullet T(a_{3})=a_{1}\bullet b_{1}\bullet T(a_{2})\otimes a_{3}\bullet b_{2}\bullet T(a_{4}).
\]
The latter is equivalent to $T(a_{3})\ot a_{1}\bullet b\bullet T(a_{2})=T(a_{1})\ot a_{2}\bullet b\bullet T(a_{3})$, and then to 
\begin{equation}\label{compAdtrivialMP}
a_{1}\bullet T(a_{4})\ot a_{2}\bullet b\bullet T(a_{3})=1\ot a_{1}\bullet b\bullet T(a_{2}), 
\end{equation}
i.e., $(\id\ot\rightharpoonup)(\mathrm{Ad}_{L}\ot\id)=(u_{H}\otimes\mathrm{\rightharpoonup})$, where $\mathrm{Ad}_{L}$ denotes again the left adjoint coaction on $H$. From the matched pair of actions $(H,\rightharpoonup,\leftharpoonup)$ of Lemma \ref{matched-pairs-from-adjoint-action}, we can define $\cdot$ as in \eqref{def-cdot}, and $S$ as in \eqref{def-S}, obtaining
\begin{equation}\label{cdotSadj}
a\cdot b=a_{1}\bullet T(a_{3})\bullet b\bullet T(T(a_{2})),\qquad S(a)=a_{1}\bullet T(a_{3})\bullet T(a_{2}).
\end{equation}
Notice that \eqref{compAdtrivialMP} clearly holds in the commutative and cocommutative cases (where the latter was already considered, e.g., in Masuoka \cite[Remark p.\@ 302]{Masuoka}). If $H^{\bullet}$ is commutative, then $(H,\bullet,\bullet)$ is an example of a Yetter--Drinfeld brace. On the other hand, if $H^{\bullet}$ is cocommutative, then the previous construction reduces to the \emph{almost trivial} Hopf brace $(H,\bullet^{\mathrm{op}},\bullet)$ (the terminology is adapted from \cite{guarnieri2017skew}).
\end{remark}

In {\S}$\,$\ref{section:coquasi}, we shall see other explicit examples coming from coquasitriangular structures. In particular, the first example will be the Sweedler's Hopf algebra $H_4$. Generalisations of $H_4$ are given by the Taft algebras $H_{n^{2}}$, which do not admit any (co)quasitriangular structure for $n>2$ (see Gelaki \cite{Ge}). Matched pairs $(H_{n^{2}},H_{n^{2}})$ on $H_{n^{2}}$ are classified by Agore \cite[Theorem 2.1]{AgoreTaft}, and 
it is not difficult to see that none of these matched pairs satisfy \eqref{braided-commutativity}. This explicitly shows that matched pairs of actions $(H,\rightharpoonup,\leftharpoonup)$ correspond to a proper subclass of the matched pairs $(H,H)$ on $H$.
\subsection{A converse connection}\label{subsection:converse}
In order to show that our definition of Yetter--Drinfeld braces is optimal, we now prove that the correspondence of Theorem \ref{thm:matched-pair-yields-YDbrace} has a converse. Therefore, a Yetter--Drinfeld brace and a matched pair of actions are essentially the same thing. This is in fact, as we shall point out, an isomorphism of categories.

\begin{theorem}\label{thm:converse}
Let $(H,\cdot, \bullet, 1, \Delta, \epsilon, S, T)$ be a Yetter--Drinfeld brace. Define 
\[
a\leftharpoonup b:=T(a_{1}\rightharpoonup b_{1})\bullet a_{2}\bullet b_{2}.
\]
Then, $(H,\rightharpoonup,\leftharpoonup)$ is a matched pair of actions. 
\end{theorem}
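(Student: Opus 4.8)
The plan is to check, one by one, the data required by Definition \ref{def:MPA} for the pair $(\rightharpoonup,\leftharpoonup)$, harvesting as much as possible from the preliminary lemmas. Several ingredients are immediate. By Definition \ref{def:YDbrace} the structure $(H,\cdot,1,\Delta,\epsilon)$ is a Hopf algebra in $^{H^\bullet}_{H^\bullet}\mathcal{YD}$ with antipode $S$; forgetting the Yetter--Drinfeld structure, it is an ordinary bialgebra with $S(a_1)\cdot a_2 = \epsilon(a)1 = a_1\cdot S(a_2)$, and the compatibility \eqref{compHopfbraces} holds, so Lemma \ref{lem:rightharpoon} applies: $\rightharpoonup$ is a left $H^\bullet$-action, $(H,\cdot,1)$ is a left $H^\bullet$-module algebra, \eqref{matchedpair-1} holds, and $\epsilon(a\rightharpoonup b)=\epsilon(a)\epsilon(b)$. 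Since $\Delta$ is a morphism in $^{H^\bullet}_{H^\bullet}\mathcal{YD}$, it is $\rightharpoonup$-linear, i.e.\ $\Delta(a\rightharpoonup b)=(a_1\rightharpoonup b_1)\otimes(a_2\rightharpoonup b_2)$; hence $H$ is a left $H^\bullet$-module coalgebra. Furthermore \eqref{braided-commutativity} is Remark \ref{rem:leftharpcoalg}, where it is also shown (via Lemma \ref{lemma:equiv-cond-colinearity} and Definition \ref{def:YDbrace}~\textit{ii.}, i.e.\ \eqref{matchedpair5}) that $\leftharpoonup$ is a morphism of coalgebras; together with the direct check $\epsilon(a\leftharpoonup b)=\epsilon(a)\epsilon(b)$ this yields the coalgebra-compatibility half of ``$H$ is a right $H^\bullet$-module coalgebra''. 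Finally \eqref{matchedpair-2} is the one-line computation $1\leftharpoonup a = T(a_1)\bullet a_2 = \epsilon(a)1$.

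It thus remains to verify \eqref{matchedpair-3}, \eqref{matchedpair-4} and that $\leftharpoonup$ is a right action. The key auxiliary identity is the converse analogue of \eqref{compblulletdotharp}: from $a\rightharpoonup b = S(a_1)\cdot(a_2\bullet b)$ and $a_1\cdot S(a_2)=\epsilon(a)1$ one gets $a\bullet b = a_1\cdot(a_2\rightharpoonup b)$. For \eqref{matchedpair-3} I would expand the left-hand side as $a\rightharpoonup(b\bullet c)=a\rightharpoonup\big(b_1\cdot(b_2\rightharpoonup c)\big)=(a_1\rightharpoonup b_1)\cdot\big((a_2\bullet b_2)\rightharpoonup c\big)$, using the module-algebra property of $\rightharpoonup$ and the action axiom $a_2\rightharpoonup(b_2\rightharpoonup c)=(a_2\bullet b_2)\rightharpoonup c$. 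The right-hand side $(a_1\rightharpoonup b_1)\bullet\big((a_2\leftharpoonup b_2)\rightharpoonup c\big)$ is rewritten with the same identity $x\bullet y = x_1\cdot(x_2\rightharpoonup y)$ (expanding $\Delta(a_1\rightharpoonup b_1)$ via the coalgebra-morphism property), then collapsed by the action axiom and \eqref{braided-commutativity} in the form $(a_2\rightharpoonup b_2)\bullet(a_3\leftharpoonup b_3)=a_2\bullet b_2$, yielding the same expression.

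For \eqref{matchedpair-4} I would expand both sides using the definition of $\leftharpoonup$ and the facts that $\rightharpoonup$ is a coalgebra morphism and an action. The left-hand side $(a\bullet b)\leftharpoonup c$ becomes $T\big(a_1\rightharpoonup(b_1\rightharpoonup c_1)\big)\bullet a_2\bullet b_2\bullet c_2$. On the right-hand side, after substituting $\leftharpoonup$ in both factors, a central factor $(b_2\rightharpoonup c_2)\bullet T(b_3\rightharpoonup c_3)$ appears, which collapses to $\epsilon(b_2)\epsilon(c_2)1$ by the antipode axiom together with $\epsilon(b\rightharpoonup c)=\epsilon(b)\epsilon(c)$; what survives is again $T\big(a_1\rightharpoonup(b_1\rightharpoonup c_1)\big)\bullet a_2\bullet b_2\bullet c_2$, proving the equality.

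It remains to see that $\leftharpoonup$ is a right action (we already have $a\leftharpoonup 1 = a$ from \eqref{matchedpair-1}). Rather than a direct associativity computation, I would set $\sigma_{H,H}(a\otimes b):=(a_1\rightharpoonup b_1)\otimes(a_2\leftharpoonup b_2)$ and observe that it satisfies \eqref{braided1}--\eqref{braided4}: the axioms \eqref{braided1}, \eqref{braided2} follow directly from \eqref{matchedpair-1}, \eqref{matchedpair-2} and the unit normalisations $1\rightharpoonup a = a$, $a\leftharpoonup 1 = a$, while \eqref{braided3}, \eqref{braided4} follow from the already-proven \eqref{matchedpair-3}, \eqref{matchedpair-4} by the same Sweedler computation as in the proof of Lemma \ref{lemma:matchedpair-iff-braiding}---crucially, the ``$\Leftarrow$'' direction there uses only that $\rightharpoonup$ is a coalgebra-morphism action and that $\leftharpoonup$ is a coalgebra morphism, both of which we have. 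Lemma \ref{lemma:retrieve-actions} then applies and, since $(\epsilon\otimes\id)\sigma_{H,H}=\leftharpoonup$ by $\epsilon(a\rightharpoonup b)=\epsilon(a)\epsilon(b)$, gives at once that $\leftharpoonup$ is a right $H$-action; this completes the right module-coalgebra structure, and hence the matched pair. The main obstacle is precisely this last point: the natural tool, Lemma \ref{lemma:matchedpair-iff-braiding}, presupposes that $\leftharpoonup$ is already a right action, so I must instead verify \eqref{braided1}--\eqref{braided4} by hand and invoke Lemma \ref{lemma:retrieve-actions} to produce the right-action property without circularity. The other place where care is needed is the bookkeeping of Sweedler indices in \eqref{matchedpair-3} and \eqref{matchedpair-4}, particularly during the $\epsilon$-collapses.
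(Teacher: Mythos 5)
Most of your proposal tracks the paper's proof: the module--coalgebra structures, \eqref{matchedpair-1}, \eqref{matchedpair-2}, the identity $a\bullet b=a_1\cdot(a_2\rightharpoonup b)$, and your verifications of \eqref{matchedpair-3} and \eqref{matchedpair-4} are essentially the paper's own computations, and none of them uses the right-action property of $\leftharpoonup$, so they are legitimately available at that stage. The gap is your last step. Your claim that the direction ``\eqref{matchedpair-3}, \eqref{matchedpair-4} $\Rightarrow$ \eqref{braided3}, \eqref{braided4}'' in Lemma \ref{lemma:matchedpair-iff-braiding} uses only that $\rightharpoonup$ is a coalgebra-morphism action and that $\leftharpoonup$ is a coalgebra morphism is correct for \eqref{braided4} but false for \eqref{braided3}. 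Run the mirrored Sweedler computation: using colinearity of $\leftharpoonup$ and then \eqref{matchedpair-3} on the first leg, $m_{12}(\sigma_{H,H})_{23}(\sigma_{H,H})_{12}(a\ot b\ot c)$ becomes $\big(a_1\rightharpoonup(b_1\bullet c_1)\big)\ot\big((a_2\leftharpoonup b_2)\leftharpoonup c_2\big)$, whereas $\sigma_{H,H}m_{23}(a\ot b\ot c)=\big(a_1\rightharpoonup(b_1\bullet c_1)\big)\ot\big(a_2\leftharpoonup(b_2\bullet c_2)\big)$; applying $\epsilon\ot\id$, the equality of these two expressions is precisely $(a\leftharpoonup b)\leftharpoonup c=a\leftharpoonup(b\bullet c)$. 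So, given what you have, \eqref{braided3} is \emph{equivalent} to the associativity of $\leftharpoonup$, and extracting the right-action property from \eqref{braided3} via Lemma \ref{lemma:retrieve-actions} is circular --- exactly the trap you flagged, merely relocated. The symmetric reason \eqref{braided4} is safe is that its verification consumes the multiplicativity of $\rightharpoonup$, which you do have from Lemma \ref{lem:rightharpoon}; the verification of \eqref{braided3} symmetrically consumes the multiplicativity of $\leftharpoonup$, which is the goal.

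This associativity is the real content of the theorem and is where \eqref{compHopfbraces} genuinely enters: the paper proves $(a\leftharpoonup b)\leftharpoonup c=a\leftharpoonup(b\bullet c)$ by a direct multi-step computation, substituting $a\rightharpoonup b=S(a_1)\cdot(a_2\bullet b)$, using the Hopf brace compatibility to split $S(a_{1})\cdot(a_{2}\bullet b_{1}\bullet c_{1})$ into $S(a_{1})\cdot(a_{2}\bullet b_{1})\cdot S(a_{3}\bullet b_{2})\cdot(a_{4}\bullet b_{3}\bullet c_{1})$, and then reassembling via $a\cdot b=a_1\bullet(T(a_2)\rightharpoonup b)$ and the anti-(co)multiplicativity of $T$; your proposal has no substitute for this computation. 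A short repair does exist inside your own framework: by colinearity of $\leftharpoonup$ (Remark \ref{rem:leftharpcoalg}) and anti-multiplicativity of $T$,
\[
(a\leftharpoonup b)\leftharpoonup c = T\big((a_2\rightharpoonup b_2)\bullet((a_1\leftharpoonup b_1)\rightharpoonup c_1)\big)\bullet a_3\bullet b_3\bullet c_2,
\]
and now \eqref{matchedpair5} (Definition \ref{def:YDbrace} \textit{ii.}) swaps the first two Sweedler legs so that \eqref{matchedpair-3} --- which you proved without using associativity --- collapses the argument of $T$ to $a_1\rightharpoonup(b_1\bullet c_1)$, yielding $a\leftharpoonup(b\bullet c)$. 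Either way, the right-action axiom must be established on its own; as written, your proposal does not prove it.
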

\begin{proof}
Since $H$ is in $\mathrm{Comon}({}^{H^\bullet}_{H^\bullet }\mathcal{YD})$, one has that $\Delta$ and $\epsilon$ are left $H^\bullet$-linear with respect to $\rightharpoonup$, and hence $H$ is a left $H^\bullet$-module coalgebra. From the definition of $\rightharpoonup$ we immediately have $a\bullet b = a_1\cdot S(a_2)\cdot (a_3\bullet b) = a_1\cdot (a_2\rightharpoonup b)$, whence also
\[
    a\cdot b= a_1\cdot (a_2\rightharpoonup(T(a_3)\rightharpoonup b))=a_1\bullet (T(a_2)\rightharpoonup b).
\]We therefore compute
\begin{align*}
    a\leftharpoonup(b\bullet c)&=T(a_{1}\rightharpoonup(b_{1}\bullet c_{1}))\bullet a_{2}\bullet(b_{2}\bullet c_{2})\\ \nonumber &=T(S(a_{1})\cdot(a_{2}\bullet b_{1}\bullet c_{1}))\bullet a_{3}\bullet b_{2}\bullet c_{2}\\ \nonumber  &=T(S(a_{1})\cdot(a_{2}\bullet b_{1})\cdot S(a_{3}\bullet b_{2})\cdot(a_{4}\bullet b_{3}\bullet c_{1}))\bullet a_{5}\bullet b_{4}\bullet c_{2}\\ \nonumber &=T\big((a_{1}\rightharpoonup b_{1})\cdot((a_{2}\bullet b_{2})\rightharpoonup c_{1})\big)\bullet a_{3}\bullet b_{3}\bullet c_{2}\\ \nonumber &=T\Big((a_{1}\rightharpoonup b_{1})\bullet\big(T(a_{2}\rightharpoonup b_{2})\rightharpoonup((a_{3}\bullet b_{3})\rightharpoonup c_{1})\big)\Big)\bullet a_{4}\bullet b_{4}\bullet c_{2}\\ \nonumber &= T\Big(\big(T(a_{2}\rightharpoonup b_{2})\bullet a_{3}\bullet b_{3}\big)\rightharpoonup c_{1}\Big)\bullet T(a_{1}\rightharpoonup b_{1})\bullet a_{4}\bullet b_{4}\bullet c_{2}\\ \nonumber &=(T(a_{1}\rightharpoonup b_{1})\bullet a_{2}\bullet b_{2})\leftharpoonup c\\ \nonumber &=(a\leftharpoonup b)\leftharpoonup c.
\end{align*}
Moreover, we have $a\leftharpoonup1=T(a_{1}\rightharpoonup1)\bullet a_{2}\bullet1=T(\epsilon(a_{1})1)\bullet a_{2}=a$, thus $\leftharpoonup$ is a right $H^{\bullet}$-action. By Remark \ref{rem:leftharpcoalg} we already know that $\Delta$ is right $H^{\bullet}$-linear with respect to $\leftharpoonup$, i.e.  $\Delta(a\leftharpoonup b) = (a_1\leftharpoonup b_1)\ot (a_2\leftharpoonup b_2)$; that $H$ is a right $H^\bullet$-module coalgebra, and that \eqref{braided-commutativity} is satisfied.
We already know that \eqref{matchedpair-1} is satisfied. Moreover, \eqref{matchedpair-2} also holds:
\begin{equation}\label{eq:matchedpair4}1\leftharpoonup a=T(1\rightharpoonup a_{1})\bullet1\bullet a_{2}=T(a_{1})\bullet a_{2}=\epsilon(a)1.
\end{equation}
In order to show that $(H,\rightharpoonup,\leftharpoonup)$ is a matched pair of actions on $H$, it remains to prove that \eqref{matchedpair-3} and \eqref{matchedpair-4} are satisfied. We thereby compute
\begin{align*}
    (a_1\rightharpoonup b_1)\bullet \Big( \big( a_2\leftharpoonup b_2\big)\rightharpoonup c\Big)&= (a_1\rightharpoonup b_1)\bullet \Big( \big( T(a_2\rightharpoonup b_2)\bullet a_3\bullet b_3\big)\rightharpoonup c\Big)\\ \nonumber
    &= (a_1\rightharpoonup b_1)\bullet \Big(T(a_2\rightharpoonup b_2)\rightharpoonup \big(a_3\bullet b_3\rightharpoonup c\big)\Big)\\ \nonumber
    &= (a_1\rightharpoonup b_1)\cdot \big(a_2\bullet b_2\rightharpoonup c\big)\\ \nonumber
    &= (a_1\rightharpoonup b_1)\cdot (a_2\rightharpoonup(b_2\rightharpoonup c))\\ \nonumber
    \overset{(\dagger)}&{=} a\rightharpoonup (b_1\cdot (b_2\rightharpoonup c))\\ \nonumber
    &= a\rightharpoonup (b\bullet c),
\end{align*}
where $(\dagger)$ follows since $(H,\cdot,1)$ is a left $H^{\bullet}$-module algebra. Finally, one has
\begin{align*}
    (a\leftharpoonup(b_1\rightharpoonup c_1))\bullet (b_2\leftharpoonup c_2)&= T(a_1\rightharpoonup (b_1\rightharpoonup c_1))\bullet a_2\bullet (b_2\rightharpoonup c_2)\bullet (b_3\leftharpoonup c_3)\\ \nonumber
    \overset{\eqref{braided-commutativity}}&{=} T(a_1\rightharpoonup (b_1\rightharpoonup c_1))\bullet a_2\bullet b_2\bullet c_2\\ \nonumber
    &= T(a_1\bullet b_1\rightharpoonup c_1)\bullet a_2\bullet b_2\bullet c_2\\ \nonumber
    &= (a\bullet b)\leftharpoonup c.
\end{align*}
\end{proof}

The previous construction is converse to the one of Theorem \ref{thm:matched-pair-yields-YDbrace}.
\begin{theorem}
    The correspondence established in Theorem \ref{thm:matched-pair-yields-YDbrace} and Theorem \ref{thm:converse} is an isomorphism between $\mathcal{YD}\mathrm{Br}(\mathsf{Vec}_\Bbbk)$ and $\mathrm{MP}(\mathsf{Vec}_\Bbbk)$.
\end{theorem}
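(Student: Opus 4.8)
The plan is to check that the two assignments of Theorems \ref{thm:matched-pair-yields-YDbrace} and \ref{thm:converse} are mutually inverse both on objects and on morphisms. Since in either direction the underlying linear map $H\to H$ is left untouched and only the auxiliary data $(\rightharpoonup,\leftharpoonup)$ or $(\cdot,S)$ is transported, functoriality (preservation of identities and composition) will be automatic once the assignments are seen to send morphisms to morphisms; so the real content consists of two object-level round-trips and one morphism-level equivalence.

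For the object-level round-trips, I would argue as follows. Starting from a matched pair of actions $(H,\rightharpoonup,\leftharpoonup)$, define $\cdot,S$ by \eqref{def-cdot}, \eqref{def-S}, so that Corollary \ref{cor:Hyetterdrinfeldbrace} produces a $\mathcal{YD}$-brace. Applying Theorem \ref{thm:converse} to it, the recovered left action is $S(a_{1})\cdot(a_{2}\bullet b)$, which equals $\rightharpoonup$ by \eqref{leftantipode}; and the recovered right action $T(a_{1}\rightharpoonup b_{1})\bullet a_{2}\bullet b_{2}$ equals $\leftharpoonup$ by \eqref{rightaction-from-leftaction}. Conversely, starting from a $\mathcal{YD}$-brace $(H,\cdot,\bullet,1,\Delta,\epsilon,S,T)$ and producing $(\rightharpoonup,\leftharpoonup)$ as in Theorem \ref{thm:converse}, the product recovered by \eqref{def-cdot} is $a_{1}\bullet(T(a_{2})\rightharpoonup b)$, which is exactly $a\cdot b$ by the identity $a\cdot b=a_{1}\bullet(T(a_{2})\rightharpoonup b)$ established inside the proof of Theorem \ref{thm:converse}. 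The map recovered by \eqref{def-S} is
\[
a_{1}\rightharpoonup T(a_{2})=S(a_{1})\cdot\big(a_{2}\bullet T(a_{3})\big)=S(a),
\]
where the last equality uses that $a_{1}\ot(a_{2}\bullet T(a_{3}))=a\ot 1$ collapses via the antipode axiom for $T$. Thus both composites are the identity on objects.

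For morphisms, recall that both notions demand in particular that $f\colon H\to K$ be a morphism of the Hopf algebras $H^{\bullet}\to K^{\bullet}$, which between Hopf algebras forces $fT_{H}=T_{K}f$. If $f$ is a morphism of matched pairs, then it intertwines $\rightharpoonup$ and $\leftharpoonup$ and preserves $\bullet,\Delta,\epsilon,T$; since $\cdot$ and $S$ are expressed through these via \eqref{def-cdot}, \eqref{def-S}, one reads off $f(a\cdot_{H}b)=f(a)\cdot_{K}f(b)$ and $fS_{H}=S_{K}f$, so $f$ is a morphism of $\mathcal{YD}$-braces. If instead $f$ is a morphism of $\mathcal{YD}$-braces, it preserves $\cdot,S,\bullet,\Delta,\epsilon,T$; since $a\rightharpoonup b=S(a_{1})\cdot(a_{2}\bullet b)$ and $a\leftharpoonup b=T(a_{1}\rightharpoonup b_{1})\bullet a_{2}\bullet b_{2}$ are expressed through these, $f$ intertwines both actions and is therefore a morphism of matched pairs. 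As the underlying map is unchanged, these two assignments on morphisms are mutually inverse.

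The only delicate points are the antipode recovery in the $\mathcal{YD}$-to-$\mathrm{MP}$-to-$\mathcal{YD}$ direction---where one must invoke the antipode axiom for $T$ to collapse $a_{1}\ot(a_{2}\bullet T(a_{3}))$---and the observation that the two, a priori differently phrased, notions of morphism in fact impose equivalent extra conditions once the defining formulas are substituted. Neither presents a genuine difficulty, so combining the object- and morphism-level statements yields the claimed isomorphism of categories between $\mathcal{YD}\mathrm{Br}(\mathsf{Vec}_\Bbbk)$ and $\mathrm{MP}(\mathsf{Vec}_\Bbbk)$.
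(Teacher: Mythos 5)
Your proposal is correct and follows essentially the same route as the paper: the paper likewise reduces the theorem to checking that the assignments on morphisms are well defined (via the same compositional expressions of $\cdot$ and $S$ through $\bullet$, $T$, $\Delta$, $\rightharpoonup$, and conversely of $\rightharpoonup$, $\leftharpoonup$ through $\cdot$, $S$, $\bullet$, $T$), with identities and composition preserved trivially since the underlying map is unchanged. The only difference is one of emphasis: you spell out the object-level round trips (using \eqref{leftantipode}, \eqref{rightaction-from-leftaction}, and the antipode axiom for $T$), which the paper asserts in one line before the theorem and does not re-verify inside the proof.
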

\begin{proof} 
Since the constructions of Theorem \ref{thm:matched-pair-yields-YDbrace} and of Theorem \ref{thm:converse} are converse to each other, if we prove that these constructions are functorial then they automatically yield an isomorphism of categories.

Let $F\colon \mathrm{MP}(\mathsf{Vec}_\Bbbk)\to \mathcal{YD}\mathrm{Br}(\mathsf{Vec}_\Bbbk)$ be the assignment defined in Theorem \ref{thm:matched-pair-yields-YDbrace}. We define the assignment on morphisms as follows: if $f\colon (H,\rightharpoonup_H,\leftharpoonup_H)\to (K,\rightharpoonup_K,\leftharpoonup_K)$ is a morphism in $\mathrm{MP}(\mathsf{Vec}_\Bbbk)$, the corresponding morphism in $\mathcal{YD}\mathrm{Br}(\mathsf{Vec}_\Bbbk)$ is $f\colon H\to K$ read as a morphism of Hopf algebras. 
Since $m_{\cdot_{H}} = m_{\bullet_{H}}(\id_{H}\ot \rightharpoonup_{H})(\id_{H}\ot T_{H}\ot \id_{H})(\Delta_{H}\ot \id_{H})$ (and a similar formula holds for $m_{\cdot_{K}}$), one has 
\begin{align*}
    m_{\cdot_{K}}(f\otimes f) &=m_{\bullet_K}(\id_{K}\ot\rightharpoonup_{K})(\id_{K}\ot T_{K}\ot\id_{K})(\Delta_{K}\ot\id_{K})(f\ot f)\\&=m_{\bullet_K}(\id_{K}\ot\rightharpoonup_{K})(\id_{K}\ot T_{K}\ot\id_{K})(f\ot f\ot f)(\Delta_{H}\ot\id_{H})\\&=m_{\bullet_K}(\id_{K}\ot\rightharpoonup_{K})(f\ot f\ot f)(\id_{H}\ot T_{H}\ot\id_{H})(\Delta_{H}\ot\id_{H})\\&=m_{\bullet_K}(f\ot f)(\id_{H}\ot\rightharpoonup_{H})(\id_{H}\ot T_{H}\ot\id_{H})(\Delta_{H}\ot\id_{H})\\&=f m_{\bullet_H}(\id_{H}\ot\rightharpoonup_{H})(\id_{H}\ot T_{H}\ot\id_{H})(\Delta_{H}\ot\id_{H})\\&= f m_{\cdot_{H}} ,
\end{align*}
thus $f$ is indeed a morphism in $\mathcal{YD}\mathrm{Br}(\mathsf{Vec}_\Bbbk)$. Moreover, it is clear that $F$ sends the identity morphism into the identity morphism. Finally, let $f\colon (H,\rightharpoonup_H,\leftharpoonup_H)\to (K,\rightharpoonup_K,\leftharpoonup_K)$ and $g\colon (K,\rightharpoonup_K,\leftharpoonup_K)\to (L,\rightharpoonup_L,\leftharpoonup_L) $ be morphisms in $\mathrm{MP}(\mathsf{Vec}_\Bbbk)$, then $F$ sends the composition $gf$ in $\mathrm{MP}(\mathsf{Vec}_\Bbbk)$ into the set-theoretic composition of maps $gf$, which is the composition of morphisms in $\mathcal{YD}\mathrm{Br}(\mathsf{Vec}_\Bbbk)$.

Analogously, one proves that the assignment $G\colon \mathcal{YD}\mathrm{Br}(\mathsf{Vec}_\Bbbk)\to \mathrm{MP}(\mathsf{Vec}_\Bbbk)$ defined in Theorem \ref{thm:converse} is a functor.
\end{proof}

\section{Yetter--Drinfeld braces as 1-cocycles}\label{section:1-cocycles}\noindent It is known (see Angiono, Galindo and Vendramin \cite[Theorem 1.12]{angiono2017hopf}, and  Fernández Vilaboa and Gonzáles Rodríguez \cite[Theorem 2.7]{eq1cocycleHopfbrace}) that a Hopf brace in a braided monoidal category $\Mm$ is tantamount to a 1-cocycle of bialgebras in $\Mm$. We are going to recover an analogous characterisation for Yetter--Drinfeld braces. 
\begin{definition}
    Let $(H,\bullet,1,\Delta,\epsilon,T)$ be a Hopf algebra, $(A,\cdot_{A},1,\Delta,\epsilon)$ be a bialgebra in $^H_H\mathcal{YD}$, and let the action of $H$ on $A$ be denoted by $\rightharpoonup_{A}$. A \emph{Yetter--Drinfeld 1-cocycle} is an isomorphism of coalgebras $\pi\colon H\to A$ satisfying the \emph{1-cocycle condition}    \begin{equation}
        \label{eq:1-cocycle}\tag{\scshape 1c} \pi(a\bullet b) = \pi (a_1)\cdot_{A}(a_2\rightharpoonup_{A} \pi( b)) 
    \end{equation}
and satisfying the following conditions for all $a,b\in H$:
\begin{align}
    &(a_{1}\rightharpoonup_{A}\pi T(a_{2}))\cdot_{A}\pi(a_{3})=\epsilon(a)1_{A};\label{comppiT}\\
    \begin{split}&T\pi^{-1}(a_2 \rightharpoonup_A \pi(b_2))\bullet a_3\bullet b_3\ot \pi^{-1}(a_1\rightharpoonup_A \pi(b_1 ))\\
    &\hspace{2em}= T\pi^{-1}(a_1 \rightharpoonup_A \pi(b_1))\bullet a_2\bullet b_2\ot \pi^{-1}(a_3\rightharpoonup_A \pi(b_3 )).\end{split}\label{compleftharpoondelta}
\end{align}
Given two Yetter--Drinfeld 1-cocycles $\pi \colon H\to A $ and $\varpi\colon K\to B$, a \emph{morphism of Yetter--Drinfeld 1-cocycles} $\pi\to\varpi$ is a pair $(f,g)$, where $f\colon H\to K$ is a morphism of Hopf algebras, and $g\colon A\to B$ is a morphism of algebras and coalgebras satisfying
\[
g\rightharpoonup_{A}\;\,=\;\,\rightharpoonup_{B}\!(f\otimes g),\qquad g\pi=\varpi f.
\]
We denote by $\mathcal{YD}\mathrm{1C}(\mathsf{Vec}_\Bbbk)$ the category of Yetter--Drinfeld 1-cocycles thus obtained.
\end{definition}
\begin{remark}
    Observe that, choosing $a=b=1_{H}$ in \eqref{eq:1-cocycle}, we obtain $\pi(1_{H})=\pi(1_{H})\cdot_{A}\pi(1_{H})$. Moreover, taking $a=1_{H}$ in \eqref{comppiT}, we obtain $\pi(1_{H})\cdot_{A}\pi(1_{H})=1_{A}$. Hence
\begin{equation}\label{comp_pi_1}
    \pi(1_H) = 1_A
\end{equation} 
 is automatically satisfied. The same is true for 1-cocycles of Hopf algebras \cite{angiono2017hopf}.
\end{remark}
\begin{theorem}
    Let $H^{\bullet}:=(H,\bullet,1,\Delta,\epsilon,T)$ be a Hopf algebra. The following data are equivalent:
    \begin{enumerate}
        \item[\textit{i.}] a coalgebra $A$ and an isomorphism of coalgebras $\pi:H\to A$, such that $A$ with the coaction $\rho:=(\id\otimes\pi)\mathrm{Ad}_{L}\pi^{-1}$ is in $\mathrm{Bimon}(^{H^{\bullet}}_{H^{\bullet}}\mathcal{YD})$, and $\pi$ is a Yetter--Drinfeld 1-cocycle;
        \item[\textit{ii.}] a second operation $\cdot$ and a map $S$, providing a Yetter--Drinfeld brace structure on $H$. \end{enumerate}
This defines an equivalence of categories between the subcategory of $\mathcal{YD}\mathrm{1C}(\mathsf{Vec}_\Bbbk)$ given by Yetter--Drinfeld 1-cocycles $\pi:H\to A$ where $A$ has coaction $(\id\otimes\pi)\mathrm{Ad}_{L}\pi^{-1}$ and $\mathcal{YD}\mathrm{Br}(\mathsf{Vec}_\Bbbk)$.
\end{theorem}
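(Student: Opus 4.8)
The plan is to realise the equivalence by \emph{transport of structure} along the coalgebra isomorphism $\pi$, the point being that the abstract coalgebra $A$ is nothing but a relabelled copy of $(H,\cdot)$. In the direction \textit{ii.}\,$\Rightarrow$\,\textit{i.} I would take $A:=(H,\cdot,1,\Delta,\epsilon)$ with $\pi:=\id_H$; then $\rho=(\id\ot\pi)\mathrm{Ad}_L\pi^{-1}=\mathrm{Ad}_L$, and by Definition~\ref{def:YDbrace}\,\textit{i.} the coalgebra $A$ is already a Hopf algebra (in particular a bimonoid) in $^{H^\bullet}_{H^\bullet}\mathcal{YD}$ with action $\rightharpoonup$ and coaction $\mathrm{Ad}_L$. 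Under $\pi=\id$ the three cocycle axioms become statements internal to the brace: \eqref{eq:1-cocycle} reads $a\bullet b=a_1\cdot(a_2\rightharpoonup b)$, which holds because $a_1\cdot(a_2\rightharpoonup b)=a_1\cdot S(a_2)\cdot(a_3\bullet b)=a\bullet b$; for \eqref{comppiT} I would first record the identity $S(a)=a_1\rightharpoonup T(a_2)$, valid in any $\mathcal{YD}$-brace (it follows from $a\rightharpoonup b=S(a_1)\cdot(a_2\bullet b)$ together with $a_2\bullet T(a_3)=\epsilon(a_2)1$), so that \eqref{comppiT} is exactly $S(a_1)\cdot a_2=\epsilon(a)1$; and \eqref{compleftharpoondelta} follows from \eqref{matchedpair5} by applying $T$ to one tensor leg and flipping.

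The substantial direction is \textit{i.}\,$\Rightarrow$\,\textit{ii.} Here I would transport the algebra $\cdot_A$ and the action $\rightharpoonup_A$ to $H$ by $a\cdot b:=\pi^{-1}(\pi(a)\cdot_A\pi(b))$ and $a\rightharpoonup b:=\pi^{-1}(a\rightharpoonup_A\pi(b))$, so that $(H,\cdot)$ is a left $H^\bullet$-module algebra whose induced coaction is $\mathrm{Ad}_L$, and set $S(a):=a_1\rightharpoonup T(a_2)$. The crux is to show that $S$ is a \emph{two-sided} convolution inverse of $\id$ for $\cdot$: the left inverse $S(a_1)\cdot a_2=\epsilon(a)1$ is precisely the transport of \eqref{comppiT}, while the right inverse comes from the transported cocycle identity $x\bullet y=x_1\cdot(x_2\rightharpoonup y)$, which gives $a_1\cdot S(a_2)=a_1\cdot(a_2\rightharpoonup T(a_3))=a_1\bullet T(a_2)=\epsilon(a)1$. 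Proposition~\ref{prop:S-is-antipode} then upgrades the bimonoid $A$ to a Hopf algebra in $^{H^\bullet}_{H^\bullet}\mathcal{YD}$, yielding Definition~\ref{def:YDbrace}\,\textit{i.} (with $\rightharpoonup=S(a_1)\cdot(a_2\bullet b)$, which follows again from $a\bullet b=a_1\cdot(a_2\rightharpoonup b)$ and the left inverse). The compatibility \eqref{compHopfbraces} I would deduce from the module-algebra property: expanding $(a_1\bullet b)\cdot S(a_2)\cdot(a_3\bullet c)$ via $x\bullet y=x_1\cdot(x_2\rightharpoonup y)$ and collapsing $S(a_i)\cdot a_{i+1}=\epsilon$ produces $a_1\cdot(a_2\rightharpoonup b)\cdot(a_3\rightharpoonup c)=a_1\cdot(a_2\rightharpoonup(b\cdot c))=a\bullet(b\cdot c)$.

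It remains to produce \eqref{matchedpair5} from \eqref{compleftharpoondelta} (the reverse implication being the easy one used above). Setting $a\leftharpoonup b:=T(a_1\rightharpoonup b_1)\bullet a_2\bullet b_2$, I note that $\rightharpoonup$ is a morphism of coalgebras (as $\Delta$ is $H^\bullet$-linear, $A$ being a coalgebra in the category) and that \eqref{braided-commutativity} and $\epsilon(a\leftharpoonup b)=\epsilon(a)\epsilon(b)$ hold as in Remark~\ref{rem:leftharpcoalg}, so Lemma~\ref{lemma:equiv-cond-colinearity} is applicable. The key manoeuvre is to $\bullet$-multiply the second tensor leg of \eqref{compleftharpoondelta} on the right by $a\bullet b$, fed through an extra comultiplication: the left-hand side then becomes $\Delta(a\leftharpoonup b)$ while the right-hand side becomes $(a_1\leftharpoonup b_1)\ot(a_2\leftharpoonup b_2)$, so that \eqref{compleftharpoondelta} is equivalent to $\leftharpoonup$ being a coalgebra morphism, whence Lemma~\ref{lemma:equiv-cond-colinearity} delivers \eqref{matchedpair5}. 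This assembles into a $\mathcal{YD}$-brace; alternatively, once $\leftharpoonup$ is in hand one could route through Theorem~\ref{thm:converse}.

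Finally, for the equivalence of categories I would let $F$ send a brace to $(\id_H\colon H\to(H,\cdot))$ and a brace morphism $f$ to the cocycle morphism $(f,f)$, and $G$ send a cocycle $(\pi\colon H\to A)$ to the transported brace and a cocycle morphism $(f,g)$ to $f$; the compatibilities $g\rightharpoonup_A=\rightharpoonup_B(f\ot g)$ and $g\pi=\varpi f$ are exactly what is needed to check that $f$ preserves $\cdot$, $\rightharpoonup$ and hence $S$, i.e. that $G$ lands in brace morphisms. Then $G\circ F=\id$ on the nose, while $F\circ G\cong\id$ via the natural isomorphism whose component at $\pi$ is the cocycle isomorphism $(\id_H,\pi)$, reflecting that $A$ is pinned down only up to coalgebra isomorphism. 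The main obstacle throughout is the Sweedler-index bookkeeping in \textit{i.}\,$\Rightarrow$\,\textit{ii.}, specifically securing the two-sided antipode and the antipode-twist relating \eqref{compleftharpoondelta} to \eqref{matchedpair5}; once the multiplication trick above is isolated, the remaining verifications are routine.
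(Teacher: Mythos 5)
Your proposal is correct and follows essentially the same route as the paper's proof: transport of $\cdot_A$ and $\rightharpoonup_A$ along $\pi$, the two-sided convolution inverse obtained from \eqref{comppiT} and the transported cocycle identity $a\bullet b=a_1\cdot(a_2\rightharpoonup b)$, the upgrade to $\mathrm{Hopf}({}^{H^\bullet}_{H^\bullet}\mathcal{YD})$ via Proposition \ref{prop:S-is-antipode}, the equivalence of \eqref{compleftharpoondelta} with \eqref{delta-colinear} (hence \eqref{matchedpair5}) through Lemma \ref{lemma:equiv-cond-colinearity} and Remark \ref{rem:leftharpcoalg}, and the same functors with unit the identity and counit $(\id_H,\pi)$. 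The only cosmetic difference is that you define the transported action first and derive $a\rightharpoonup b=S(a_1)\cdot(a_2\bullet b)$, whereas the paper defines $S$ first and recovers the action; the two are interchangeable given the same identities.
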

\begin{proof} The proof is fundamentally the same as for Hopf braces \cite[Theorem 1.12]{angiono2017hopf}.\smallskip

\noindent\textbf{\textit{i.\@} to \textit{ii.}} Suppose given a Yetter--Drinfeld 1-cocycle $\pi\colon H\to A$. Define the second structure $H^{\raisebox{1pt}{.}}$ on $H$ by pulling back the multiplication $\cdot_{A}$ of $A$: \[a\cdot b:= \pi^{-1}(\pi(a)\cdot_{A}\pi(b)).\] This is clearly associative and, from \eqref{comp_pi_1}, it is immediate that $H^{\raisebox{1pt}{.}}$ is unitary with unit $1_H$. Define $S(a):= \pi^{-1}(a_1\rightharpoonup_A \pi T(a_2))$, and consequently let
\begin{align*}
    a\rightharpoonup b :\!&= S(a_1)\cdot (a_2\bullet b)\\
    &= \pi^{-1}\left( \pi S(a_1)\cdot_A \pi(a_2\bullet b) \right)\\
    \overset{\eqref{eq:1-cocycle}}&{=} \pi^{-1}\left( (a_1\rightharpoonup_A \pi T(a_2))\cdot_A \pi(a_3)\cdot_A (a_4\rightharpoonup_A \pi(b))  \right)\\
    \overset{\eqref{comppiT}}&{=}\pi^{-1}(a\rightharpoonup_A\pi(b)).
\end{align*}In particular $S$ satisfies $S(a) = a_1\rightharpoonup T(a_2)$, which is \eqref{def-S}. Since $\rightharpoonup$ is just the action $\rightharpoonup_A$ pulled back through $\pi$, this is clearly a left action of $H^\bullet$ on itself. By assumption, $A$ is in $\mathrm{Bimon}(^{H^{\bullet}}_{H^{\bullet}}\mathcal{YD})$ with coaction $\rho(a)=(\id\ot\pi)\mathrm{Ad}_{L}\pi^{-1}(a)=\pi^{-1}(a_{1})\bullet T(\pi^{-1}(a_{3}))\ot a_{2}$, hence $\rho$ and $\rightharpoonup$ satisfy the compatibility condition of a Yetter--Drinfeld module:
\[
\rho(a\rightharpoonup_{A}b)=a_{1}\bullet \pi^{-1}(b_{1})\bullet T(\pi^{-1}(b_{3}))\bullet T(a_{3})\ot(a_{2}\rightharpoonup_{A}b_{2}),
\]
moreover $\cdot_A$ and $\Delta$ satisfy the bialgebra compatibility in $^{H^{\bullet}}_{H^{\bullet}}\mathcal{YD}$:
\begin{equation}\label{compDeltadotA}
\Delta(a\cdot_{A}b)=a_{1}\cdot_{A}(\pi^{-1}(a_{2})\bullet T(\pi^{-1}(a_{4}))\rightharpoonup_{A} b_{1})\ot(a_{3}\cdot_{A} b_{2}).
\end{equation}
It easily follows that $(H^{\raisebox{1pt}{.}},\rightharpoonup,\mathrm{Ad}_{L})$ is in $^{H^{\bullet}}_{H^{\bullet}}\mathcal{YD}$, and that $m_{\cdot}$, $u$, $\Delta$ and $\epsilon$ are left linear with respect to $\rightharpoonup$. 

We show that $S$ is a convolution inverse of $\id_H$ with respect to the algebra structure $H^{\raisebox{1pt}{.}}$:
\begin{align*}
    a_{1}\cdot S(a_{2})&=a_{1}\cdot\pi^{-1}(a_{2}\rightharpoonup_{A} \pi T(a_{3}))\\&=\pi^{-1}(\pi(a_{1})\cdot_{A}(a_{2}\rightharpoonup_{A}\pi T(a_{3})))\\&=\pi^{-1}\pi(a_{1}\bullet T(a_{2}))\\&=\epsilon(a)1_{H},
\end{align*}
and 
\begin{align*}
    S(a_{1})\cdot a_{2}&=\pi^{-1}(a_{1}\rightharpoonup_{A}\pi T(a_{2}))\cdot a_{3}\\&=\pi^{-1}((a_{1}\rightharpoonup_{A}\pi T(a_{2}))\cdot_{A}\pi(a_{3}))\\\overset{\eqref{comppiT}}&{=}\pi^{-1}(\epsilon(a)1_{A})\\&=\epsilon(a)1_{H}.
\end{align*}
Let $a\leftharpoonup b:=T(a_{1}\rightharpoonup b_{1})\bullet a_{2}\bullet b_{2}=T\pi^{-1}(a_{1}\rightharpoonup_{A}\pi(b_{1}))\bullet a_{2}\bullet b_{2}$. Notice that \eqref{T-on-harpoon} is automatically satisfied. We verify that $\leftharpoonup$ satisfies \eqref{delta-colinear}:
\begin{align*}
(a_1\leftharpoonup b_1)\ot (a_2\leftharpoonup b_2) &=T\pi^{-1}(a_{1}\rightharpoonup_{A}\pi(b_{1}))\bullet a_{2}\bullet b_{2}\ot T\pi^{-1}(a_{3}\rightharpoonup_{A}\pi(b_{3}))\bullet a_{4}\bullet b_{4}\\
\overset{\eqref{compleftharpoondelta}}&{=}T\pi^{-1}(a_2\rightharpoonup_A \pi(b_2 ))\bullet a_3\bullet b_3\ot T\pi^{-1}(a_1 \rightharpoonup_A \pi(b_1))\bullet a_4\bullet b_4\\
&= (a\leftharpoonup b)_1\ot (a\leftharpoonup b)_2.
\end{align*}
Moreover, since $\epsilon(a\rightharpoonup b)=\epsilon(a)\epsilon(b)$, one clearly has $\epsilon(a\leftharpoonup b)=\epsilon(a)\epsilon(b)$, hence $\leftharpoonup$ is a morphism of coalgebras: by Remark \ref{rem:leftharpcoalg}, this is equivalent to requiring \eqref{matchedpair5}. Furthermore, $(a_1\rightharpoonup b_1)\bullet (a_2\leftharpoonup b_2) = a\bullet b$ holds by definition. By Lemma \ref{lemma:equiv-cond-colinearity}, we have that \eqref{compharpoonpbullet} is satisfied. 
Moreover, one has
\begin{align*}
    \pi\left( (a_1\bullet b)\cdot S(a_2)\cdot (a_3\bullet c) \right)& = \pi (a_1\bullet b)\cdot_{A} \pi(S(a_2))\cdot_{A} \pi(a_3\bullet c)\\
    \overset{\eqref{eq:1-cocycle}}&{=} \pi(a_1)\cdot_{A} (a_2\rightharpoonup \pi(b))\cdot_{A} \pi(S(a_3))\cdot_{A} \pi(a_4)\cdot_{A} (a_5\rightharpoonup \pi(c))\\
    &= \pi(a_1)\cdot_{A} (a_2\rightharpoonup \pi(b))\cdot_{A} (a_3\rightharpoonup \pi(c))\\
    &= \pi(a_1)\cdot_{A} \left(a_2\rightharpoonup (\pi(b)\cdot_{A} \pi(c))\right)\\
    &= \pi(a_1)\cdot_{A} (a_2\rightharpoonup \pi(b\cdot c))\\
    \overset{\eqref{eq:1-cocycle}}&{=}\pi(a\bullet(b\cdot c)),
\end{align*}
and this implies \eqref{compHopfbraces} because $\pi$ is bijective.

In order to obtain that $(H^{\raisebox{1pt}{.}},\rightharpoonup,\mathrm{Ad}_{L})$ is in $\mathrm{Bimon}(^{H^{\bullet}}_{H^{\bullet}}\mathcal{YD})$, it remains to prove that $\Delta$ is a morphism of algebras with respect to $\cdot$ in $^{H^{\bullet}}_{H^{\bullet}}\mathcal{YD}$, and that $m_{\cdot}$ is left colinear with respect to $\mathrm{Ad}_{L}$. Both easily follow from the analogous properties of $A$.

Finally, by Proposition \ref{prop:S-is-antipode}, we obtain that $S$ is an antipode for $H^{\raisebox{1pt}{.}}$. Thus, $(H,\cdot,\bullet,1,\Delta,\epsilon,S,T)$ is a Yetter--Drinfeld brace.
\smallskip

\noindent\textbf{\textit{ii.\@} to \textit{i.}} Suppose given a Yetter--Drinfeld brace $(H,\cdot,\bullet,1,\Delta,\epsilon, S,T)$. Define $A:=(H,\cdot, 1,\Delta,\epsilon)$. This is in $\mathrm{Bimon}(^{H^{\bullet}}_{H^{\bullet}}\mathcal{YD})$ with action $a\rightharpoonup b=S(a_{1})\cdot(a_{2}\bullet b)$, and the coaction $\mathrm{Ad}_{L}$. Set $\pi:=\mathrm{Id}_{H}\colon H^{\bullet}\to H^{\raisebox{1pt}{.}}$, which is clearly a coalgebra isomorphism. Moreover, we compute
\[
\pi(a\bullet b)=a\bullet b=a_{1}\cdot S(a_{2})\cdot(a_{3}\bullet b)=a_{1}\cdot (a_{2}\rightharpoonup b)=\pi(a_{1})\cdot(a_{2}\rightharpoonup\pi(b)),
\]
while \eqref{comppiT} and \eqref{compleftharpoondelta} are automatically satisfied. Thus $\pi:H^{\bullet}\to H^{\raisebox{1pt}{.}}$ is a Yetter--Drinfeld 1-cocycle. 
\smallskip

\noindent\textbf{Functoriality.} We verify that the above correspondence is functorial. Let $\pi \colon H\to A$, $\varpi\colon K\to B$ be Yetter--Drinfeld 1-cocycles, with a morphism $\pi\to \varpi $ in $\mathcal{YD}\mathrm{1C}(\mathsf{Vec}_\Bbbk)$ given by maps $f\colon H\to K$, $g\colon A\to B$. Then $f\colon H^\bullet\to K^\bullet $ is a morphism of Hopf algebras. Moreover,
\begin{align*}
    f(a\cdot_{H} b) &= f \pi^{-1}(\pi(a)\cdot_{A}\pi(b))\\
    &= \varpi^{-1} g(\pi(a)\cdot_{A}\pi(b))\\
    &= \varpi^{-1}(g\pi(a)\cdot_{B} g\pi(b))\\
    &= \varpi^{-1}(\varpi f(a)\cdot_{B} \varpi f(b))\\
    &= f(a)\cdot_{K} f(b)
\end{align*}
and
\begin{align*}
    S_{K} f(a) &= \varpi^{-1}(f(a)_1\rightharpoonup_{B} \varpi T_{K}(f(a)_2))\\
    &= \varpi^{-1}(f(a_1)\rightharpoonup_{B} \varpi T_{K}(f(a_2)))\\
    &=\varpi^{-1}(f(a_1)\rightharpoonup_{B} \varpi f T_{H}(a_2))\\
    &= \varpi^{-1}(f(a_1)\rightharpoonup_{B} g\pi T_{H}(a_2))\\
    &= \varpi^{-1} g(a_1\rightharpoonup_A \pi T_H(a_2))\\
    &= f\pi^{-1}(a_1\rightharpoonup_A \pi T_H(a_2))\\
    &=  f (a_1\rightharpoonup_H T_H(a_2))\\
    &= f S_H(a),
\end{align*}
thus $f$ is a morphism in $\mathcal{YD}\mathrm{Br}(\mathsf{Vec}_\Bbbk)$. 
The above construction clearly sends identity morphisms into identity morphisms, and respects the composition of maps. We denote by $F$ the functor $\mathcal{YD}\mathrm{1C}(\mathsf{Vec}_\Bbbk)\to \mathcal{YD}\mathrm{Br}(\mathsf{Vec}_\Bbbk)$ thus obtained.

Conversely, a functor $G\colon \mathcal{YD}\mathrm{Br}(\mathsf{Vec}_\Bbbk)\to \mathcal{YD}\mathrm{1C}(\mathsf{Vec}_\Bbbk)$ is defined by the following action on the maps: if $f\colon H\to K$ is a morphism of Yetter--Drinfeld braces, let $Gf$ be the pair of maps $(f,f)$, where the former is interpreted as a map $f\colon H^\bullet \to K^\bullet$, and the latter as a map $f\colon H^{\raisebox{1pt}{.}}\to K^{\raisebox{1pt}{.}}$. We check that this is a morphism in $\mathcal{YD}\mathrm{1C}(\mathsf{Vec}_\Bbbk)$ between $\id_H\colon H^\bullet \to H^{\raisebox{1pt}{.}}$ and $\id_K\colon K^\bullet\to  K^{\raisebox{1pt}{.}}$. 

We already know that $f:H^{\bullet}\to K^{\bullet}$ is a morphism of Hopf algebras and $f:H^{\raisebox{1pt}{.}}\to K^{\cdot}$ is a morphism of algebras and coalgebras. Moreover, $f(a\rightharpoonup_H b) = f(a)\rightharpoonup_H f(b)$ holds true because $f$ is a morphism in $\mathcal{YD}\mathrm{Br}(\mathsf{Vec}_\Bbbk)$. The assignment $G$ clearly respects identities and compositions.
\smallskip

\noindent\textbf{Equivalence.} We finally check that this is an equivalence of categories. It is clear that $FG$ is the identity functor. Given a Yetter--Drinfeld 1-cocycle $\pi\colon H^{\bullet}\to A$, one has that $GF(\pi)$ is the Yetter--Drinfeld 1-cocycle $\id\colon H^\bullet \to H^{\raisebox{1pt}{.}}$, which is isomorphic to $\pi\colon H^{\bullet}\to A$ via the morphism $(\id_H,\pi^{-1})$. It is easy to verify that this is indeed a morphism in $\mathcal{YD}\mathrm{1C}(\mathsf{Vec}_\Bbbk)$, and its inverse is clearly $(\id_H, \pi)$.
\end{proof}

\section{Coquasitriangular Hopf algebras and Yetter--Drinfeld braces}\label{section:coquasi}\noindent
In this section, we shall prove that coquasitriangular bialgebras $H$ yield matched pairs of actions $(H,\rightharpoonup,\leftharpoonup)$ satisfying the hypotheses of Theorem \ref{thm:matched-pair-yields-YDbrace}, and hence enable us to construct concrete examples of Yetter--Drinfeld braces. The additional structure, here, coincides with Majid's transmutation of $H$ \cite{Maj2}.
\subsection{Coquasitriangular bialgebras}\label{subsection:coquasi} The notion of \emph{quasitriangular bialgebra} was introduced by Drinfeld \cite{drinfeld1989quasi}, to relax the hypothesis of cocommutativity. Here we shall operate with the dual notion, which appeared in several works in between 1990 and 1993, such as in Schauenburg \cite{Sch}, Doi \cite{doi:10.1080/00927879308824649}, Larson and Towber \cite{larson1991two}, and Majid \cite{MJbraided}.
\begin{definition}
    A \emph{coquasitriangular bialgebra}, denoted by $(H,m,u,\Delta,\epsilon, \Rr)$ or simply by $(H,\Rr)$, is the datum of a bialgebra $(H,m,u,\Delta,\epsilon)$ and of a convolution-invertible morphism $\Rr\colon H\ot H\to \Bbbk$, satisfying:
    \begin{align}
     \label{coQT.1}\tag{\sc coqt.1}&\Rr(a_1\otimes b_{1})a_2 b_2 = b_1 a_1 \Rr(a_2\otimes b_{2});\\
    \label{coQT.2}\tag{\sc coqt.2}&\Rr(a\otimes bc)=\Rr(a_1\otimes c)\Rr(a_2\otimes b);\\
    \label{coQT.3}\tag{\sc coqt.3}&\Rr(ab\otimes c)=\Rr(a\otimes c_1)\Rr(b\otimes c_2).
\end{align}
Moreover, $(H,\Rr)$ is called \emph{cotriangular} if $\Rr^{-1}=\Rr^{\mathrm{op}}$, where $\Rr^{-1}$ denotes the convolution inverse of $\Rr$.
\end{definition}

If $\Rr$ provides a coquasitriangular structure, then the convolution inverse $\Rr^{-1}$ satisfies:
\begin{align}
\label{coQT.1'}\tag{\sc coqt.1$'$}&a_1b_1\Rr^{-1}(a_2\ot b_2) = \Rr^{-1}(a_1\ot b_1)b_2 a_2;\\
    \label{coQT.2'}\tag{\sc coqt.2$'$}&\Rr^{-1}(a\otimes bc)=\Rr^{-1}(a_{1}\otimes b)\Rr^{-1}(a_{2}\otimes c);\\
    \label{coQT.3'}\tag{\sc coqt.3$'$}&\Rr^{-1}(ab\otimes c)=\Rr^{-1}(b\otimes c_{1})\Rr^{-1}(a\otimes c_{2}).
\end{align}

Let $\Rr$ be a coquasitriangular structure on a bialgebra $H$. Then, it is known that $\Rr$ and its convolution inverse $\Rr^{-1}$ satisfy the \emph{quantum Yang--Baxter equation} (see \cite[Lemma 2.2.3]{Majid-book}):
    \begin{equation}
        \label{YBE}\tag{\sc ybe}
            \Rr^{\pm 1}(a_1\ot b_1)\Rr^{\pm 1}(a_2\ot c_1)\Rr^{\pm 1}(b_2\ot c_2) = \Rr^{\pm 1}(b_1\ot c_1)\Rr^{\pm 1}(a_1\ot c_2)\Rr^{\pm 1}(a_2\ot b_2).
    \end{equation}
Moreover, if $H$ is a Hopf algebra with antipode $S$, then the following relations hold true:
\[
\Rr(S(a)\ot b)=\Rr^{-1}(a\ot b),\quad \Rr^{-1}(a\ot S(b))=\Rr(a\ot b),\quad \Rr^{\pm1}(S(a)\ot S(b))=\Rr(a\ot b),
\]
see \cite[Lemma 2.2.2]{Majid-book}. It is well known that a bialgebra $H$ is coquasitriangular if and only if the category $\vect^{H}$ of right $H$-comodules is braided (see \cite[Exercise 9.2.9]{Majid-book} and the considerations following \cite[Theorem 9.2.4]{Majid-book}). Indeed, given a coquasitriangular structure $\Rr$, the bialgebra $H$ becomes a braided object with the braiding operator
\[\sigma^{\Rr}_{H,H}(a\ot b):= b_1\ot a_1 \Rr(a_2\ot b_2),
\]and a braiding on $\vect^{H}$ is defined for all $X,Y$ in $\vect^{H}$ by setting 
\begin{equation}\label{braiding-comodules}
\sigma^{\Rr}_{X,Y}:X\otimes Y\to Y\otimes X,\quad x\otimes y\mapsto y_{0}\otimes x_{0}\Rr(x_{1}\otimes y_{1}).    
\end{equation}
Moreover, $H$ is cotriangular if and only if the category $\vect^{H}$ is symmetric, i.e.\@ $(\sigma^{\Rr}_{X,Y})^{-1}=\sigma^{\Rr}_{Y,X}$ for all $X,Y$ in $\vect^{H}$.

In the rest of this section, however, we shall use another braiding operator on $H$. Let $(H,\Rr)$ be a coquasitriangular bialgebra, then the map $\sigma_{H,H}\colon H\ot H\to H\ot H$ defined by
\begin{equation}\label{braiding-bicomodules}
    \sigma_{H,H}:H\ot H\to H\ot H,\ a\ot b\mapsto\Rr^{-1}(a_{1}\ot b_{1})b_{2}\ot a_{2}\Rr(a_{3}\ot b_{3}),
\end{equation}
is known to be a braiding operator on $H$, with inverse given by
\[
\sigma^{-1}_{H,H}:H\ot H\to H\ot H,\ a\ot b\mapsto\Rr(b_{1}\ot a_{1})b_{2}\ot a_{2}\Rr^{-1}(b_{3}\ot a_{3}).
\]
Moreover, $m_{H}\sigma_{H,H}= m_{H}$ holds.
\begin{remark}
    The braiding operator \eqref{braiding-bicomodules} extends to a braiding
\[ x\ot y\mapsto \Rr^{-1}(x_{-1}\ot y_{-1}) y_0 \ot x_0 \Rr(x_1\ot y_1)\] on the category of \emph{bicomodules} $^H\!\vect^H$. However, the braidings on $(^H\!\vect^H,\otimes,\Bbbk)$ are not yet classified, and the problem is seemingly hard. When the category of \emph{bimodules} ${}_H\!\vect_H$ is endowed with the tensor product $\ot_H$ instead of $\ot_\Bbbk$, however, a classification was found by Agore, Caenepeel and Militaru \cite{agore2014braidings}, which could be dualised for the category $^{H}\!\vect^{H}$.
\end{remark}
\begin{remark}\label{remark:involutive}
Let $\sigma_{H,H}$ be the braiding operator defined in \eqref{braiding-bicomodules}. Notice that 
\[
\sigma_{H,H}^{2}=\Rr^{-1}*(\Rr^{-1})^{\mathrm{op}}*\mathrm{Id}_{H\ot H}*\Rr^{\mathrm{op}}*\Rr=(\Rr^{\mathrm{op}}*\Rr)^{-1}*\mathrm{Id}_{H\ot H}*(\Rr^{\mathrm{op}}*\Rr),
\]
hence we have that $\sigma_{H,H}^{2}=\mathrm{Id}_{H\ot H}$ if and only if $\mathrm{Id}_{H\ot H}*(\Rr^{\mathrm{op}}*\Rr)=(\Rr^{\mathrm{op}}*\Rr)*\mathrm{Id}_{H\ot H}$. The element $\Rr^\op *\Rr$ is also known as the \emph{quantum Killing form}. In particular, if $\Rr^{\mathrm{op}}=\Rr^{-1}$, one clearly has $\sigma^{-1}_{H,H}=\sigma_{H,H}$. 
\end{remark}
\subsection{Coquasitriangular Hopf algebras and matched pairs of actions}\label{subsection:coQT-and-MP} Suppose given a coquasitriangular Hopf algebra $(H,\bullet, 1,\Delta,\epsilon, T,\Rr)$. We shall see that the braiding operator \eqref{braiding-bicomodules} yields a matched pair satisfying \eqref{braided-commutativity}, and hence a Yetter--Drinfeld brace. The product and the antipode of the corresponding Hopf algebra structure in $^H_H\mathcal{YD}$ coincide with those obtained by \emph{transmutation} of $H$, introduced by Majid \cite{Maj2}; thus providing a Yetter--Drinfeld brace-theoretic interpretation of the theory of transmutation. At the end of this section, we shall also observe that the same construction cannot be carried out with the braiding \eqref{braiding-comodules}. 

Let $\sigma_{H,H}$ be defined as in \eqref{braiding-bicomodules}, and define 
\begin{align}
\label{R-harpoon-left}&a\rightharpoonup b=(\id\ot \epsilon)\sigma_{H,H}(a\ot b) = \Rr^{-1}(a_{1}\ot b_{1})b_{2}\Rr(a_{2}\ot b_{3});\\
\label{R-harpoon-right}&a\leftharpoonup b=(\epsilon\ot \id)\sigma_{H,H}(a\ot b) = \Rr^{-1}(a_{1}\ot b_{1})a_{2}\Rr(a_{3}\ot b_{2}).
\end{align}
These are a left and a right action respectively, by Lemma \ref{lemma:retrieve-actions}, and $H$ becomes a left and right module coalgebra with the respective actions. Moreover, these actions provide a matched pair $(H,H)$, see \cite[Example 7.2.7]{Majid-book}. Observe that the braiding \eqref{braiding-bicomodules} is retrieved as 
\[\sigma_{H,H}(a\ot b) = (a_1\rightharpoonup b_1)\ot (a_2\leftharpoonup b_2).\]
Hence the fact that $(H,\rightharpoonup,\leftharpoonup)$ satisfies \eqref{matchedpair-1}--\eqref{matchedpair-4} can also be obtained using Lemma \ref{lem:mhex} and Lemma \ref{lemma:matchedpair-iff-braiding}, while \eqref{braided-commutativity} is immediate.
\begin{remark}
The bialgebra $H \bowtie H$ described in \S$\,$\ref{section:prelim}, when the matched pair is derived as in \eqref{R-harpoon-left} and \eqref{R-harpoon-right}, is denoted in Majid \cite{Majid-book} by $H\bowtie_{\Rr}H$. This has product given by \[(a\otimes h)(b\otimes g)=\Rr^{-1}(h_{1}\otimes b_{1})a\bullet b_{2}\otimes h_{2}\bullet g\Rr(h_{3}\otimes b_{3}).\] Moreover, we have
\[
a\leftharpoonup b=\Rr^{-1}(a_{1}\otimes b_{1})a_{2}\Rr(a_{3}\otimes b_{2})=\Rr(T(a_{1})\otimes b_{1})a_{2}\Rr(a_{3}\otimes b_{2})=a_{2}\Rr(T(a_{1})\bullet a_{3}\otimes b),
\]
so this situation falls under the hypotheses of \cite[Lemma 7.4.8]{Majid-book} where, in our case, the coaction is given by $\mathrm{Ad}_{R}\colon a\mapsto a_{2}\otimes T(a_{1})\bullet a_{3}$. Thus \cite[Lemma 7.4.8]{Majid-book} ensures us that $H$ is also a right $H$-module with respect to $\leftharpoonup$. Hence, using $\leftharpoonup$ and $\mathrm{Ad}_{R}$ one can construct the bialgebra $ H \rbiprod\, \underline{H}  $ (we borrow the notation from \cite{Majid-book}), where $\underline{H}$ is the (right) transmutation of $H$ \cite[Theorem 7.4.1]{Majid-book} (see also \cite[Example 9.4.10]{Majid-book}), and the symbol $\rbiprod$ denotes the bialgebra with smash product algebra structure, and smash coproduct coalgebra structure. So $H \rbiprod\, \underline{H}$ is exactly the bosonisation $H\#\underline{H}$, built using the right action and the right coaction. By \cite[Theorem 7.4.10]{Majid-book}, one also obtains an isomorphism $H\bowtie_{\Rr}H\cong H \rbiprod\, \underline{H} $. In Remark \ref{rem:transmutation}, we shall relate $H^{\raisebox{1pt}{.}}$ with a \emph{left} version $\overline{H}$ of Majid's transmutation.
\end{remark}

Notice that 
\[
a\rightharpoonup b=\Rr^{-1}(a_{1}\otimes b_{1})b_{2}\Rr(a_{2}\otimes b_{3})=\Rr^{-1}(a_{1}\otimes b_{1})b_{2}\Rr^{-1}(a_{2}\otimes T(b_{3}))=b_{2}\Rr^{-1}(a\otimes b_{1}\bullet T(b_{3})).
\]

In this case, \eqref{def-cdot} takes the following form:
\begin{align}
    \label{cdot_R_alt}a\cdot b = a_1\bullet b_2\Rr^{-1}(T(a_2)\ot (b_1\bullet T(b_3))),
\end{align}
and also
\begin{equation}\label{cdot_R}\begin{split}a\cdot b &= \Rr^{-1}(T(a_{3})\ot b_{1})a_{1}\bullet b_{2}\Rr^{-1}(T(a_{2})\ot T(b_{3}))\\ &=\Rr^{-1}(T(a_{3})\ot b_{1})a_{1}\bullet b_{2}\Rr^{-1}(a_{2}\ot b_{3})\\
 \overset{\eqref{coQT.1'}}&{=} \Rr^{-1}(T(a_3)\ot b_1)\Rr^{-1}(a_1\ot b_2)b_3\bullet a_2\\
&=\Rr^{-1}((a_1\bullet T(a_3))\ot b_1)b_2\bullet a_2,
\end{split}\end{equation}
while \eqref{def-S} has the following form:\begin{equation}\label{S_R}\begin{split}    S(a)&=a_{1}\rightharpoonup T(a_{2})=T(a_{3})\Rr^{-1}(a_{1}\ot T(T(a_{2})\bullet a_{4}))\\&=T(a_{3})\Rr(a_{1}\ot T(a_{2})\bullet a_{4})\\&=T(a_{4})\Rr(a_{1}\ot a_{5})\Rr(a_{2}\ot T(a_{3})).
\end{split}\end{equation}
\begin{remark}
    When $H$ is cocommutative, $\rightharpoonup$ is trivial, and hence $\cdot$ coincides with $\bullet$. Thus the corresponding Yetter--Drinfeld brace is just the \emph{trivial Hopf brace} $(H,\bullet,\bullet)$ (the terminology is again adapted from \cite{guarnieri2017skew}).
\end{remark}
\begin{theorem}\label{thm:RYDB}
    Let $(H,\bullet,1,\Delta,\epsilon,T, \Rr)$ be a coquasitriangular Hopf algebra. Define $\rightharpoonup$ and $\leftharpoonup$ as above, and define $\cdot$ and $S$ accordingly. Then, $(H,\cdot,\bullet,1,\Delta,\epsilon, S,T)$ is a Yetter--Drinfeld brace. Furthermore, if $\Rr$ is cotriangular, then $\cdot$ is braided-commutative with respect to the braiding of $^H_H\mathcal{YD}$, whence also $S^{2}=\id_{H}$. 
\end{theorem}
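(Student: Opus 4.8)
The plan is to split the statement into its two assertions, and to feed the coquasitriangular data into the machinery already assembled.

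\emph{The Yetter--Drinfeld brace structure.} By the discussion preceding the statement, the operator $\sigma_{H,H}$ of \eqref{braiding-bicomodules} is a braiding on $H$; in particular it satisfies the hexagon axioms, equivalently \eqref{braided1}--\eqref{braided4} by Lemma \ref{lem:mhex}. Since $\sigma_{H,H}(a\ot b) = (a_1\rightharpoonup b_1)\ot(a_2\leftharpoonup b_2)$ with $\rightharpoonup,\leftharpoonup$ as in \eqref{R-harpoon-left}--\eqref{R-harpoon-right}, Lemma \ref{lemma:matchedpair-iff-braiding} turns \eqref{braided1}--\eqref{braided4} into \eqref{matchedpair-1}--\eqref{matchedpair-4}, while Lemma \ref{lemma:retrieve-actions} guarantees that $\rightharpoonup$ and $\leftharpoonup$ are genuine actions making $H$ a left and right module coalgebra. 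For \eqref{braided-commutativity} I would simply invoke $m_H\sigma_{H,H} = m_H$, which gives $(a_1\rightharpoonup b_1)\bullet(a_2\leftharpoonup b_2) = m_\bullet\sigma_{H,H}(a\ot b) = a\bullet b$. Thus $(H,\rightharpoonup,\leftharpoonup)$ is a matched pair of actions, and Corollary \ref{cor:Hyetterdrinfeldbrace} delivers the Yetter--Drinfeld brace $(H,\cdot,\bullet,1,\Delta,\epsilon,S,T)$ at once.

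\emph{The cotriangular case.} Assume now $\Rr^{-1} = \Rr^{\op}$. First, Remark \ref{remark:involutive} immediately gives $\sigma_{H,H}^2 = \id_{H\ot H}$. The substantive claim is the braided commutativity of $\cdot$ with respect to the braiding $\sigma^{\mathcal{YD}}_{H,H}\colon a\ot b\mapsto(a_1\bullet T(a_3)\rightharpoonup b)\ot a_2$ of Lemma \ref{rbraiding}, that is, $m_\cdot = m_\cdot\,\sigma^{\mathcal{YD}}_{H,H}$, or explicitly $a\cdot b = (a_1\bullet T(a_3)\rightharpoonup b)\cdot a_2$. I would prove this by unwinding both sides with the explicit $\Rr$-expressions: the right-hand side is computed from \eqref{cdot_R} and \eqref{R-harpoon-left} after comultiplying the adjoint coaction leg $a_1\bullet T(a_3)$, and is then reduced using the axioms \eqref{coQT.1}--\eqref{coQT.3'}, the quantum Yang--Baxter equation \eqref{YBE}, and the antipode relations $\Rr(T(a)\ot b) = \Rr^{-1}(a\ot b)$ and $\Rr^{-1}(a\ot T(b)) = \Rr(a\ot b)$. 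The cotriangularity hypothesis $\Rr^{-1} = \Rr^{\op}$ is exactly what is needed to collapse the resulting product of $\Rr$-factors back to the single factor of \eqref{cdot_R}, yielding $a\cdot b$. Alternatively, since in Remark \ref{rem:transmutation} the operation $\cdot$ is identified with a left version of Majid's transmutation, one may instead invoke Majid's theorem that the transmutation of a cotriangular Hopf algebra is braided-commutative. I expect this collapse to be the main obstacle: it is the only step where cotriangularity genuinely enters, and it requires careful bookkeeping of the Sweedler legs coming from the adjoint coaction.

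\emph{Involutivity of the antipode.} Finally, $S^2 = \id_H$ will follow formally from braided commutativity, with no further use of the coquasitriangular data. Indeed, in any braided monoidal category the antipode of a Hopf algebra $B$ is a braided anti-algebra morphism, $S\,m_B = m_B\,\sigma_{B,B}(S\ot S)$; braided commutativity $m_B\sigma_{B,B} = m_B$ (which also forces $m_B\sigma_{B,B}^{-1}=m_B$, so the argument is insensitive to the orientation of the braiding) collapses this to $S\,m_B = m_B(S\ot S)$, so that $S$ is an algebra morphism of $(H,\cdot)$. Applying $S$ to the antipode axiom $m_\cdot(\id\ot S)\Delta = u\epsilon$ then gives $S * S^2 = u\epsilon = S * \id$ in the convolution monoid, and cancelling $S$ (whose convolution inverse is $\id$) yields $S^2 = \id_H$. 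I would record this last implication as a short standalone observation, since it holds for every braided-commutative Hopf algebra.
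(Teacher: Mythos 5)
Your proposal is correct and follows essentially the same route as the paper's proof. For the brace structure, the paper likewise reduces to Corollary \ref{cor:Hyetterdrinfeldbrace}, quoting Majid for \eqref{matchedpair-1}--\eqref{matchedpair-4} and deducing \eqref{braided-commutativity} from \eqref{coQT.1} via $m_\bullet\sigma_{H,H}=m_\bullet$; the lemma-chain through Lemmas \ref{lem:mhex}, \ref{lemma:matchedpair-iff-braiding} and \ref{lemma:retrieve-actions} that you invoke is explicitly offered in the text preceding the theorem as an equivalent derivation. One small imprecision: Lemma \ref{lemma:retrieve-actions} yields only the action axioms, not the module-coalgebra property that Lemma \ref{lemma:matchedpair-iff-braiding} presupposes; the latter is a separate (one-line) check using $\Rr*\Rr^{-1}=\epsilon\ot\epsilon$, and is part of the ``as above'' setup. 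For the cotriangular case, the computation you outline is exactly the one the paper executes: it rewrites $a\cdot b=(a_{1}\bullet T(a_{3})\rightharpoonup b)\cdot a_{2}$ through \eqref{cdot_R}, expands using \eqref{coQT.2'}, \eqref{coQT.3'} and the antipode relations, and uses $\Rr^{-1}=\Rr^{\op}$ to cancel the surplus $\Rr$-factors; so your sketch, though unexecuted, is the right plan, and your fallback via Majid's transmutation theorem is also legitimate in view of Remark \ref{rem:transmutation}. Finally, where the paper settles $S^{2}=\id_{H}$ by citing the known fact that braided commutativity forces an involutive antipode, you supply a correct proof of that fact: $Sm_{\cdot}=m_{\cdot}\,\sigma^{\mathcal{YD}}_{H,H}(S\ot S)$ collapses under $m_{\cdot}\sigma^{\mathcal{YD}}_{H,H}=m_{\cdot}$ to multiplicativity of $S$, and the cancellation in $S*S^{2}=u\epsilon=S*\id$ is valid since convolution of arbitrary linear endomaps is associative. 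Your parenthetical $m_{\cdot}(\sigma^{\mathcal{YD}}_{H,H})^{-1}=m_{\cdot}$ tacitly assumes the braiding invertible, but this is harmless here (the antipode of a coquasitriangular Hopf algebra is bijective) and your main argument never uses the inverse.
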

\begin{proof}
    We already know that $(H,\rightharpoonup,\leftharpoonup)$ satisfies \eqref{matchedpair-1}--\eqref{matchedpair-4} \cite[Example 7.2.7]{Majid-book}. Moreover, it follows from \eqref{coQT.1} that $H$ is braided-commutative, i.e.\@ $m_\bullet \sigma_{H,H} = m_\bullet$, which translates into condition \eqref{braided-commutativity}. Corollary \ref{cor:Hyetterdrinfeldbrace} yields that $(H,\cdot,\bullet,1,\Delta,\epsilon,S,T)$ is a Yetter--Drinfeld brace. 
    
    Suppose now that $\Rr$ is cotriangular. The braided commutativity of $\cdot$ with respect to the braiding of $^{H}_{H}\mathcal{YD}$ is expressed by the condition $a\cdot b=(a_{1}\bullet T(a_{3})\rightharpoonup b)\cdot a_{2}$, which in view of \eqref{cdot_R} is rewritten as    
\[ b_3\bullet a_2 \Rr^{-1}(T(a_3)\ot b_1)\Rr^{-1}(a_1\ot b_2) = b_2\cdot  a_3\Rr^{-1}((a_1\bullet T(a_5))\ot b_1)\Rr((a_2\bullet T(a_4))\ot b_3).\]    
    Suppose $\Rr^{-1} = \Rr^\op$. Then
    \begin{align*}
        &\hspace{-2em}b_2 \cdot a_3\Rr^{-1}((a_1\bullet T(a_5))\ot b_1)\Rr((a_2\bullet T(a_4))\ot b_3)\\
        &= b_3\bullet a_4 \Rr^{-1}(T(a_7)\ot b_1)\Rr^{-1}(a_1\ot b_2)\Rr^{-1}(T(b_5)\ot a_3)\\
        &\hspace{1em}\;\Rr^{-1}(b_4\ot a_5)\Rr(a_2\ot b_6)\Rr(T(a_6)\ot b_7)\\
        &= b_3\bullet a_4 \Rr^{-1}(T(a_7)\ot b_1) \Rr^{-1}(a_1\ot b_2)\Rr(a_3\ot T(b_5))\\
        &\hspace{1em}\;\Rr(a_5\ot b_4)\Rr(a_2\ot b_6)\Rr(T(a_6)\ot b_7)\\
        &= b_3\bullet a_2 \Rr^{-1}(T(a_5)\ot b_1)\Rr^{-1}(a_1\ot b_2) \Rr(a_3\ot b_4)\Rr(T(a_4)\ot b_5)\\
        &= b_3\bullet a_2 \Rr^{-1}(T(a_3)\ot b_1)\Rr^{-1}(a_1\ot b_2),
    \end{align*}
    as desired. Given a Hopf algebra $(H,\cdot, 1, \Delta, \epsilon, S)$ in a braided monoidal category $\Mm$, it is known that, if $\cdot$ is braided-commutative in $\Mm$, then $S$ is involutive.
\end{proof}
The Hopf algebra SL\textsubscript{\emph{q}}(2) with $q=-1$ will be an example of a coquasitriangular Hopf algebra with involutive antipode $S$, which is not cotriangular.

\begin{remark}\label{rem:transmutation} Let us show how the Hopf algebra $(H,\cdot,1,\Delta,\epsilon,S)$ is related to the transmutation of the Hopf algebra $(H,\bullet,1,\Delta,\epsilon,T)$. For more details about transmutation theory we redirect the reader to Majid \cite{Maj2, Majid-book}. 
Let $(H,\bullet,1,\Delta,\epsilon,T,\Rr)$ be a coquasitriangular Hopf algebra, so that $\Rr^{-1}$ induces a braiding on the category $^{H}\mathfrak{M}$. It is possible to define a second product operation $\overline{\bullet}$ on $H$, and a second antipode $\overline{T}$, such that they provide, on $H$, a Hopf algebra structure $\overline{H}$ in $^{H}\mathfrak{M}$ (with same comultiplication $\Delta$ and counit $\epsilon$), where the comodule structure is given by $\mathrm{Ad}_L$. This is called the \emph{transmutation} of $H$; see \cite[dual of Example 9.4.10]{Majid-book}. The transmuted product and the transmuted antipode are, respectively,
\begin{align*}&a\,\overline{\bullet}\, b:= \Rr^{-1}(T(a_2)\ot(b_1\bullet T(b_3))) a_1\bullet b_2,\\
&\overline{T}(a):= \Rr^{-1}(a_1\ot T(T(a_2)\bullet a_4)) T(a_3).\end{align*}
By \eqref{cdot_R_alt} we have $a\,\overline{\bullet}\, b=a\cdot b$ and by \eqref{S_R} we have $\overline{T}(a)=S(a)$, so $(H,\cdot,1,\Delta,\epsilon,S)$ coincides with the transmutation of $(H,\bullet,1,\Delta,\epsilon,T)$. Hence the transmutation of coquasitriangular Hopf algebras gives us examples of Yetter--Drinfeld braces. Theorem \ref{thm:matched-pair-yields-YDbrace} implies, in particular, that $\overline{H}$ lies in $\mathrm{Hopf}({}^{H^{\bullet}}_{H^{\bullet}}\mathcal{YD})$, with action given by $\rightharpoonup$. Observe from \cite[duals of Theorem 4.8 and Corollary 4.9]{ZhuRelativeYD} (with $f=\id_{H^{\bullet}}$) that there is a braided monoidal equivalence between $_{H^{\raisebox{1pt}{.}}}(^{H^{\bullet}}\mm)$ and ${}^{H^{\bullet}}_{H^{\bullet}}\mathcal{YD}$, hence one can also recover an object in $\mathrm{Hopf}(_{H^{\raisebox{1pt}{.}}}(^{H^{\bullet}}\mm))$. 

\end{remark}

\begin{remark}
    We know from Counterexample \ref{counterexample:nomatchedpair} that the following two actions do not form a matched pair:
\[
a\rightharpoonup b=\Rr(a\otimes b_{2})b_{1},\qquad a\leftharpoonup b=a_{1}\Rr(a_{2}\otimes b).
\]
We may define $\cdot$ as in \eqref{def-cdot}:
\[
a\cdot b :=a_{1}\bullet(T(a_{2})\rightharpoonup b)=a_{1}\bullet \Rr(T(a_{2})\otimes b_{2})b_{1}=a_{1}\bullet b_{1}\Rr^{-1}(a_{2}\otimes b_{2}).
\]
With this product, $(H,\cdot, 1)$ is an algebra. But now observe that, since $\epsilon(a\cdot b)=\Rr^{-1}(a\ot b)$, $\epsilon$ is not a morphism of algebras with respect to $\cdot$, unless $\Rr^{-1}=\epsilon\otimes\epsilon$ and hence $\Rr=\epsilon\otimes\epsilon$: in this case, by \eqref{coQT.1}, $H$ is commutative.
\end{remark}
In the case of cocommutative Hopf braces, it is known (see \cite[Corollary 2.5]{angiono2017hopf}) that the additional operation \eqref{def-cdot} is commutative if and only if the braiding operator $\sigma(a\ot b):= (a_1\rightharpoonup b_1)\ot (a_2\leftharpoonup b_2)$ is involutive. An analogous result for Yetter--Drinfed braces may possibly hold, substituting commutativity with \emph{braided commutativity} in $^H_H\mathcal{YD}$. However, we attempted mimicking the proof of \cite[Theorem 2.3]{angiono2017hopf} in several straightforward ways, and all of them failed; thus leaving the following problem open:
\begin{problem}
    Let $(H,\rightharpoonup,\leftharpoonup)$ be a matched pair of actions on a Hopf algebra, and let $\cdot$ be defined as in \eqref{def-cdot}. Is it true that $\sigma$ is involutive if and only if $\cdot$ is braided-commutative?
\end{problem}

\begin{remark}
The well-known FRT construction \cite{FRT} allows one to construct a coquasitriangular bialgebra $A(c)$ from a solution $c\in\mathrm{End}_{\Bbbk}(V\otimes V)$ of the \emph{braid equation} $(c\ot \id)(\id\ot c)(c\ot \id)=(\id\ot c)(c\ot \id)(\id\ot c)$. The bialgebra $A(c)$ is spanned by indeterminates $t^i_j$ (see the construction as presented by Kassel \cite{kassel2012quantum}). 
If we adjoin to $A(c)$ inverses $\bar{t}$ to the variables, one can canonically extend the relations, the comultiplication and the counit to $\bar{t}$, and define $St := \bar{t}$, obtaining a Hopf algebra $GL(c)$. Suppose now that $\Rr$ extends so that $GL(c)$ is coquasitriangular (this happens in many concrete examples, see \cite[\S$\,$4.2]{Majid-book}). Using the coquasitriangular Hopf algebra $(GL(c),\Rr)$ we can obtain a Yetter--Drinfeld brace, where $\cdot$ is defined as in \eqref{cdot_R}, and $S$ as in and \eqref{S_R}.
\end{remark}

\section{Examples}\label{section:example}
\noindent In this section, we apply our results to known examples of coquasitriangular Hopf algebras; namely the Sweedler's Hopf algebra, the algebras $E(n)$, the Hopf algebra $\mathrm{SL}_q(2)$, and an example in the class of Suzuki algebras. For each example, we describe the corresponding Yetter--Drinfeld brace. Throughout this section, our notation changes from ``brace-theoretic'' $(H, \cdot,\bullet, 1,\Delta, \epsilon, S,T)$ to ``transmutation-theoretic'' $(H,\overline{\cdot}, \cdot, 1,\Delta, \epsilon,\overline{S},S)$, in order to match the way in which the objects in the examples are usually presented.
\subsection{The Sweedler's Hopf algebra}\label{subsection:Sweedler} Let us suppose $\mathrm{char}(\Bbbk)\not=2$. The Sweedler's Hopf algebra $H_4 = E(1)$ is the free $\Bbbk$-algebra generated by two elements $g$ and $x$ modulo the relations 
    \[
    g^{2}=1,\ x^{2}=0,\ \text{and}\ xg=-gx, 
    \]   
    where $1$ denotes the unit of $H_4$. It becomes a Hopf algebra with comultiplication, counit and antipode given by $\Delta(g)=g\otimes g$, $\Delta(x)=x\otimes1+g\otimes x$, $\epsilon(g)=1$, $\epsilon(x)=0$, $S(g)=g$ and $S(x)=xg$, respectively. In particular, $H_4$ is neither commutative nor cocommutative. We recall that $H_4$ has a one-parameter family of cotriangular structures given by
\begin{equation*}
\Rr\begin{pmatrix}
    1\otimes1 & 1\otimes g
    &1\otimes x & 1\otimes xg\\
    g\otimes1 & g\otimes g & g\otimes x & g\otimes xg\\
    x\otimes1 & x\otimes g & x\otimes x & x\otimes xg\\
    xg\otimes1 & xg\otimes g & xg\otimes x & xg\otimes xg
\end{pmatrix}=\begin{pmatrix}
    1 & 1 & 0 & 0\\
    1 & -1 & 0 & 0\\
    0 & 0 & k & -k\\
    0 & 0 & k & k
\end{pmatrix}, 
\end{equation*}
where $k$ is a parameter in $\Bbbk$ (see \cite[Example 2.2.6]{Majid-book}). The convolution inverse of $\Rr$ is $\Rr^{-1}=\Rr^{\mathrm{op}}$ given by the transpose matrix
\begin{equation*}
\Rr^{-1}\begin{pmatrix}
    1\otimes1 & 1\otimes g
    &1\otimes x & 1\otimes xg\\
    g\otimes1 & g\otimes g & g\otimes x & g\otimes xg\\
    x\otimes1 & x\otimes g & x\otimes x & x\otimes xg\\
    xg\otimes1 & xg\otimes g & xg\otimes x & xg\otimes xg
\end{pmatrix}=\begin{pmatrix}
    1 & 1 & 0 & 0\\
    1 & -1 & 0 & 0\\
    0 & 0 & k & k\\
    0 & 0 & -k & k
\end{pmatrix}.
\end{equation*}
Given $\alpha:=a_{1}1+a_{2}g+a_{3}x+a_{4}xg$ in $H_4$, one has
\begin{align*}
(\Delta\ot\id)\Delta(\alpha)&=a_{1}1\ot1\ot1+a_{2}g\ot g\ot g+a_{3}x\ot1\ot1+a_{3}g\ot x\ot1\\&\hspace{1em}\;+a_{3}g\ot g\ot x+a_{4}xg\ot g\ot g+a_{4}1\ot xg\ot g+a_{4}1\ot1\ot xg.
\end{align*}
Hence, given $\beta:=b_{1}1+b_{2}g+b_{3}x+b_{4}xg$ in $H_4$, one has 
\begin{align*}
\alpha\rightharpoonup\beta&=(a_{1}b_{1}+a_{2}b_{1}+k(a_{3}b_{3}-a_{3}b_{4}-a_{4}b_{3}+a_{4}b_{4}))1\\&\hspace{1em}\;+(a_{1}b_{2}+a_{2}b_{2}+k(a_{3}b_{4}-a_{3}b_{3}-a_{4}b_{4}+a_{4}b_{3}))g\\&\hspace{1em}\;+(a_{1}b_{3}-a_{2}b_{3})x+(a_{1}b_{4}-a_{2}b_{4})xg.
\end{align*}
Analogously, for $\leftharpoonup$ one has:
\begin{align*}\alpha\leftharpoonup \beta &= (a_1b_1+a_1b_2+a_3b_3 k+a_3b_4 k+a_4 b_3 k+a_4b_4 k)1 \\
&\hspace{1em}\;+(a_2b_1+a_2b_2-a_3b_3 k-a_3b_4 k-a_4b_3k -a_4b_4 k)g\\&\hspace{1em}\;
+(a_3b_1-a_3b_2)x+(a_4b_1-a_4b_2)xg.\end{align*}
The two actions are summarised in Table \ref{table:actions}.
\begin{table}[t]\centering
\begin{subtable}[t]{0.4\linewidth}
\begin{tabular}{c|cccc}
    $\rightharpoonup$& $1$ & $g$ & $x$ & $xg$\\ \hline
    $1$ & $1$ & $g$ & $x$ & $xg$ \\ 
    $g$ & $1$ & $g$ & $-x$ & $-xg$ \\ 
    $x$ & $0$ & $0$ & $k1-kg$ & $kg-k1$ \\ 
    $xg$ & $0$ & $0$ & $kg-k1$ & $k1-kg$ \\ 
    \end{tabular}\caption{Left action.}\end{subtable} 
\begin{subtable}[t]{0.4\linewidth}\begin{tabular}{c|cccc}
    $\leftharpoonup$& $1$ & $g$ & $x$ & $xg$\\ \hline
    $1$ & $1$ & $1$ & $0$ & $0$ \\ 
    $g$ & $g$ & $g$ & $0$ & $0$ \\ 
    $x$ & $x$ & $-x$ & $k1-kg$ & $k1-kg$ \\ 
    $xg$ & $xg$ & $-xg$ & $k1-kg$ & $k1-kg$ \\ 
    \end{tabular}\caption{Right action.}\end{subtable}\caption{Left and right actions for $H_4$.}\label{table:actions}
    \end{table}
We denote by $\,\overline{\cdot}\,$ the transmuted multiplicative structure on $H_4$, defined according to \eqref{cdot_R}. This is explicitly given by
\begin{align*}
    \alpha\,\overline{\cdot}\, \beta&= \alpha_1 (S(\alpha_2)\rightharpoonup\beta)\\
    &= a_1(S(1)\rightharpoonup \beta)+a_2g(S(g)\rightharpoonup \beta)+a_3 x(S(1)\rightharpoonup \beta)\\&\hspace{1em}\;+a_3 g(S(x)\rightharpoonup \beta)+a_4 xg(S(g)\rightharpoonup \beta)+a_4(S(xg)\rightharpoonup \beta)\\
    &= a_1\beta +a_2 g (b_11+b_2 g -b_3 x-b_4 xg)+a_3 x\beta +a_3 g((kb_4-kb_3)1+(kb_3-kb_4)g)\\&\hspace{1em}\;+a_4 xg(b_11+b_2 g -b_3 x-b_4 xg)+a_4((kb_4-kb_3)1+(kb_3-kb_4)g)\\
    &=(a_1b_1+a_2b_2+ka_3 b_3-ka_3b_4+ka_4b_4-ka_4 b_3)1\\&\hspace{1em}\; +(a_1b_2+a_2b_1+ka_3b_4-ka_3 b_3+ka_4b_3-ka_4b_4)g\\&\hspace{1em}\; +(a_1b_3+a_2b_4+a_3b_1+a_4b_2)x\\&\hspace{1em}\;+(a_1b_4+a_2b_3+a_3b_2+a_4b_1)xg.
\end{align*}
The multiplication $\,\overline{\cdot}\,$ is described in Table \ref{table:Sweedler}. The algebra $(H_4, \,\overline{\cdot}\, ,1)$ is generated by $g,x$ modulo the relations $g \,\overline{\cdot}\, g =1$, $x\,\overline{\cdot}\, x = k1-kg$, $x\,\overline{\cdot}\, g= g\,\overline{\cdot}\, x$.
\begin{table}[t]\centering
\begin{tabular}{c|cccc}
    $\,\overline{\cdot}\,$& $1$ & $g$ & $x$ & $xg$\\ \hline
    $1$ & $1$ & $g$ & $x$ & $xg$ \\ 
    $g$ & $g$ & $1$ & $xg$ & $x$ \\ 
    $x$ & $x$ & $xg$ & $k1-kg$ & $kg-k1$ \\ 
    $xg$ & $xg$ & $x$ & $kg-k1$ & $k1-kg$ \\ 
    \end{tabular}
    \caption{Multiplication table for $\,\overline{\cdot}\,$ in $H_4$.}\label{table:Sweedler}
\end{table}
The new antipode $\overline{S}$ is defined by $\overline{S}(\alpha):=\alpha_{1}\rightharpoonup S(\alpha_{2})$ as in \eqref{S_R}, whence $\overline{S}(1)=1$, $\overline{S}(g)=g$, $\overline{S}(x)= -xg=-x\overline{\cdot}g$, and $\overline{S}(xg) = \overline{S}(x\,\overline{\cdot}\, g)=\overline{S}(g)\,\overline{\cdot}\, \overline{S}(x) = -x$. Notice that $\,\overline{\cdot}\,$ is braided-commutative and $\overline{S}$ is involutive, as we know from Theorem \ref{thm:RYDB}. Remark \ref{remark:involutive} implies that the braiding $\sigma_{H_4,H_4}\colon \alpha \ot \beta\mapsto (\alpha_1\rightharpoonup \beta_1)\ot (\alpha_2\leftharpoonup\beta_2)$ is involutive.

Observe that $\,\overline{\cdot}\,$ is also commutative, even if $\sigma^{\mathcal{YD}}_{H_4,H_4}$ does not coincide with the flip map $\tau$ (for instance, it is easy to see that $\sigma^{\mathcal{YD}}_{H_4,H_4}$ and $\tau$ differ on $x\ot x$). Notice moreover that $\,\overline{\cdot}\,$ is compatible with $\Delta$ in the braided sense, but not in the classical sense: indeed $(x\,\overline{\cdot}\, x)_1\ot (x\,\overline{\cdot}\, x)_2 = (k1-kg)\ot (k1-kg) \neq (k1-kg)\ot 1+2xg\ot g +1\ot (k1-kg) = (x_1\,\overline{\cdot}\, x_1)\ot (x_2\,\overline{\cdot}\, x_2)$. In particular, this Yetter--Drinfeld brace is not a Hopf brace.
\begin{remark}\label{classificationAgore}
Matched pairs of bialgebras $(H_4,H_4)$ on the Sweedler's Hopf algebra are classified by Bontea \cite[Theorem 2.1]{BC}. Other than the matched pairs arising from cotriangular structures, already considered above, there is only one more pair of actions on $H_4$ satisfying \eqref{matchedpair-1}--\eqref{matchedpair5}: this is the pair provided by the trivial actions $a\ot b\mapsto \epsilon(a)b$, $a\ot b\mapsto a\,\epsilon(b)$. 
Since $H_4$ is not commutative, the pair with the trivial actions does not satisfy \eqref{braided-commutativity}, and hence is not a matched pair of actions.
\end{remark}

We have, as a byproduct of Remark \ref{classificationAgore}, the following result:

\begin{proposition}
    The only matched pair of actions $(H_4,\rightharpoonup,\leftharpoonup)$ on the Sweedler's Hopf algebra $H_4$ is the matched pair defined in Table \ref{table:actions}.
\end{proposition}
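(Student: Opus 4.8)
The plan is to obtain the statement as a direct consequence of the external classification recalled in Remark~\ref{classificationAgore}, combined with the observation that braided commutativity \eqref{braided-commutativity} is precisely what singles out matched pairs of actions among all matched pairs of bialgebras on $H_4$.

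First I would note that any matched pair of actions $(H_4,\rightharpoonup,\leftharpoonup)$ is, by the Corollary following Lemma~\ref{lemma:equiv-cond-colinearity}, a matched pair of bialgebras $(H_4,H_4)$; that is, $(\rightharpoonup,\leftharpoonup)$ satisfies \eqref{matchedpair-1}--\eqref{matchedpair5}. Bontea's classification \cite[Theorem 2.1]{BC}, as summarised in Remark~\ref{classificationAgore}, then tells us that such a pair must be either one of the one-parameter family arising from the cotriangular structures $\Rr$---precisely the actions recorded in Table~\ref{table:actions} for the various values of the parameter $k$---or else the pair of trivial actions $a\ot b\mapsto\epsilon(a)b$, $a\ot b\mapsto a\,\epsilon(b)$.

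It then remains only to rule out the trivial pair and to confirm that the cotriangular ones qualify. By Remark~\ref{rem:trivialactions}, the trivial actions satisfy \eqref{braided-commutativity} if and only if $\bullet$ is commutative; since $xg=-gx\neq gx$ in $H_4$, they do not, so the trivial pair is not a matched pair of actions. Conversely, each member of the cotriangular family does satisfy \eqref{braided-commutativity}, as shown in {\S}\ref{subsection:coQT-and-MP} (it is exactly $m_\bullet\sigma_{H,H}=m_\bullet$, a consequence of \eqref{coQT.1}). Hence the matched pairs of actions on $H_4$ are exactly the actions of Table~\ref{table:actions}. Since the whole argument is bookkeeping on top of a cited classification, I do not expect a genuine obstacle; the one point deserving care is the finite verification, on the basis $\{1,g,x,xg\}$, that the cotriangular entries of Bontea's list coincide value-by-value in $k$ with the $\rightharpoonup$ and $\leftharpoonup$ tabulated here.
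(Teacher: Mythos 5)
Your proposal is correct and takes essentially the same route as the paper, which derives the proposition precisely as a byproduct of Remark \ref{classificationAgore}: Bontea's classification of matched pairs $(H_4,H_4)$, the exclusion of the trivial pair because $H_4$ is non-commutative so \eqref{braided-commutativity} fails, and the fact that the cotriangular family satisfies \eqref{braided-commutativity} as shown in {\S}\,\ref{subsection:coQT-and-MP}. The only thing you add is the (reasonable) caveat about matching Bontea's list entry-by-entry with Table \ref{table:actions}, which the paper leaves implicit in the phrase ``already considered above''.
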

\subsection{The Hopf algebras \emph{E}(\emph{n})} Let us suppose $\mathrm{char}(\Bbbk)\not=2$. A generalisation of the Sweedler's Hopf algebra is provided by the Hopf algebras $E(n)$ generated as algebras by $g, x_i$ for $i = 1,\ldots, n$, with relations
\[g^2 = 1,\;\; x_i^2 = 0,\;\; x_ig = -g x_i,\;\; x_ix_j = -x_j x_i,\;\;\text{ for all }i=1,\ldots, n,\]
endowed with the comultiplication $\Delta(1) = 1\ot 1$, $\Delta(g) = g\ot g$, $\Delta(x_i) = x_i\ot 1+g
\ot x_i$, the counit $\epsilon (g) = 1$, $\epsilon(x_i) =0$, and the antipode $S(g)=g$, $S(x_i) = x_i g$. 
The quasitriangular structures on $E(n)$ are classified by Panaite and Van Oystaeyen \cite{PanaiteOystaeyenEn}, and the coquasitriangular structures are obtained by duality (see e.g.\@ Carnovale and Cuadra \cite[p.\@ 256]{CarnovaleCuadraEn}):
\begin{align*}
    &\Rr_A\begin{pmatrix}
        1\ot 1& 1\ot g & 1\ot x_j & 1\ot x_j g \\
        g\ot 1& g\ot g & g\ot x_j & g\ot x_j g \\
        x_i\ot 1& x_i\ot g & x_i\ot x_j & x_i\ot x_j g \\
        x_ig\ot 1& x_ig\ot g & x_ig\ot x_j & x_ig\ot x_j g \end{pmatrix}= \begin{pmatrix}1 & 1 & 0 & 0 \\
    1 & -1 & 0 & 0 \\
    0 & 0 & A_{ij}& -A_{ij}\\
    0 & 0 & A_{ij}& A_{ij} \end{pmatrix},
\end{align*}
where $A=(A_{ij})_{ij}$ is an $n\times n$ matrix with entries in $\Bbbk$. It is easy to see that $\Rr_A$ is cotriangular if and only if $A$ is symmetric. Panaite and Van Oystaeyen \cite{PanaiteOystaeyenEn} conjectured that $R_A$ is triangular if and only if $A$ is symmetric; which was later proven by Carnovale and Cuadra \cite[Proposition 2.1]{CarnovaleCuadraEn}.

Assume $A_{ij} = A_{ji}$, thus $\Rr^{-1} = \Rr^\op$. We define a second multiplicative structure $\,\overline{\cdot}\,$ as in \eqref{cdot_R}. This is described in Table \ref{table:E(n)} (in particular, it is commutative). In terms of generators and relation, $(E(n), \,\overline{\cdot}\,,1)$ is the algebra generated by $g, x_i$ with relations $g \,\overline{\cdot}\, g = 1$, $x_i\,\overline{\cdot}\, x_j + x_j\,\overline{\cdot}\, x_i = 2A_{ij}(1-g)$, $x_i\,\overline{\cdot}\, g = g\,\overline{\cdot}\, x_i$. The action $\rightharpoonup$ is described in Table \ref{table:E(n)-action}.
\begin{table}[t]\centering
\begin{tabular}{c|cccc}
    $\,\overline{\cdot}\,$& $1$ & $g$ & $x_j$ & $x_j g$\\ \hline 
    $1$   & $1$& $g$& $x_j$& $x_j g$\\ 
    $g$   & $g$& $1$& $x_jg$&$x_j$  \\  
    $x_i$ & $x_i$& $x_ig$& $A_{ij}1- A_{ij}g+ x_ix_j$ & $A_{ij}g-A_{ij}1+x_ix_j g $\\  
    $x_ig$& $x_ig$& $x_i$& $A_{ij}g-A_{ij}1+x_ix_j g$ &  $A_{ij}1-A_{ij}g +x_ix_j $
    \end{tabular}
    \caption{Multiplication table for $\,\overline{\cdot}\,$ in $E(n)$.}\label{table:E(n)}
\end{table}
\begin{table}[t]
    \centering
    \begin{tabular}{c|cccc}
    $\rightharpoonup$& $1$ & $g$ & $x_j$ & $x_jg$\\ \hline
    $1$ & $1$ & $g$ & $x_j$ & $x_jg$ \\ 
    $g$ & $1$ & $g$ & $-x_j$ & $-x_jg$ \\ 
    $x_i$ & $0$ & $0$ & $A_{ij}1-A_{ij}g$ & $A_{ij}g-A_{ij}1$ \\ 
    $x_ig$ & $0$ & $0$ & $A_{ij}g-A_{ij}1$ & $A_{ij}1-A_{ij}g$ \\ 
    \end{tabular}\caption{Table for the action $\rightharpoonup$ in $E(n)$.}
    \label{table:E(n)-action}
\end{table}
The antipode $\overline{S}$, defined as in \eqref{S_R}, acts as follows: $\overline{S}(1) = 1$, $\overline{S}(g)=g$, $\overline{S}(x_i) = -x_i g = -x_i\,\overline{\cdot}\, g$.
\subsection{The Hopf algebra SL\textsubscript{\emph{q}}(2)}\label{subsection:Sq} Let $q\in \C^\times$, and consider, following Kassel \cite[{\S}$\,$IV.6]{kassel2012quantum}, the $\C$-algebra $\mathrm{SL}_q(2)$ generated by $a,b,c,d$ modulo the following relations:
\begin{align*}
    &ba = qa b, && ca = qac, && db = qbd,\\
    & dc = qcd,&& bc = cb,&& ad-da = (q^{-1}-q)bc,\\
    & && da- qbc = 1. && 
\end{align*}
This is known to be a Hopf algebra with 
\[\Delta\begin{pmatrix}
    a &b\\
    c & d
\end{pmatrix} = \begin{pmatrix}
    a &b\\
    c & d
\end{pmatrix}\ot \begin{pmatrix}
    a &b\\
    c & d
\end{pmatrix},\qquad \epsilon \begin{pmatrix}
    a &b\\
    c & d
\end{pmatrix} = \begin{pmatrix}
    1&0\\
    0&1
\end{pmatrix}, \qquad S\begin{pmatrix}
    a &b\\
    c & d
\end{pmatrix} = \begin{pmatrix}
    d & -qb\\
    -q^{-1}c & a
\end{pmatrix},
\]and, when $q\neq 1$, it is neither commutative nor cocommutative. The Hopf algebra $\mathrm{SL}_q(2)$ is coquasitriangular \cite[Corollary VIII.7.2]{kassel2012quantum}, with
\begin{align*}
\Rr&\begin{pmatrix}
    a\ot a & b \otimes b    &a \otimes b& b \ot a\\
    c\ot c & d\ot d& c\ot d & d\ot c\\
    a\ot c & b\ot  d& a\ot d& b\ot c\\
    c\ot a & d\ot b & c\ot b & d\ot a
\end{pmatrix}=q^{-1/2}\begin{pmatrix}
    q & 0 & 0 & 0\\
    0 & q & 0 & 0\\
    0 & 0 & 1 & q-q^{-1}\\
    0 & 0 & 0 & 1
\end{pmatrix},\\
 \Rr^{-1}&\begin{pmatrix}
    a\ot a & b \otimes b    &a \otimes b& b \ot a\\
    c\ot c & d\ot d& c\ot d & d\ot c\\
    a\ot c & b\ot  d& a\ot d& b\ot c\\
    c\ot a & d\ot b & c\ot b & d\ot a
\end{pmatrix}=q^{1/2}\begin{pmatrix}
    q^{-1} & 0 & 0 & 0\\
    0 & q^{-1} & 0 & 0\\
    0 & 0 & 1 & q^{-1}-q\\
    0 & 0 & 0 & 1
\end{pmatrix}.
\end{align*}
Notice that $\Rr^{-1}=\Rr^{\mathrm{op}}$ if and only if $q^{1/2}=q^{-1/2}$, if and only if $q=1$. In this case $\Rr=\Rr^{-1}=\Rr^{\mathrm{op}}$, and the previous matrices are the identities. The explicit form of the iterated $\Delta$ is here reported:
\begin{align*}
    (\Delta\ot \id)\Delta \, (a) &= a\ot a \ot a +a \ot b\ot c + b\ot  c\ot a+ b\ot d\ot c;\\
    (\Delta\ot \id)\Delta \, ( b) &= a\ot a\ot b + a\ot  b\ot d + b\ot c\ot b + b\ot d\ot d;\\
    (\Delta\ot \id)\Delta \, ( c) &= c\ot a\ot a +  c\ot b\ot c+ d\ot c\ot a+ d\ot d\ot c;\\
    (\Delta\ot \id)\Delta \, ( d) &=  c\ot a\ot b +  c\ot b\ot d + d\ot c\ot b +  d\ot d\ot d.
\end{align*}
We define an additional operation $\,\overline{\cdot}\,$ on $\mathrm{SL}_q(2)$ as in \eqref{cdot_R}. Using the iterated $\Delta$, we compute Table \ref{table:SLq(2)} for the multiplication $\,\overline{\cdot}\,$, and Table \ref{table:SLq(2)-action} for the left action $\rightharpoonup$. Hence we have that $\,\overline{\cdot}\,$ is commutative if and only if $q=q^{-1}$, i.e., if and only if $q=\pm1$. In particular, if $q=-1$, $\,\overline{\cdot}\,$ is commutative but $\Rr$ is not cotriangular.
\begin{table}[t]\centering
\begin{tabular}{c|cccc}
    $\,\overline{\cdot}\,$& $ a $ & $ b $ & $ c $ & $ d $\\ \hline
    $ a $ & $  a ^2$ & $q a  b  $ & $ q^{-1} a  c $ & $ a   d $ \\ 
    $ b $ & $ q^{-1} a  b  $ & $q^{-1} b ^2 $ & $(q^2-1)  a ^2 + (1-q^2)  a  d  + q  b  c $
 & $(q^3-q)  a  b  +  b  d  $ \\ 
    $ c $ & $q  a  c 
 $ & $ q b  c $ & $q^{-1} c ^2 $ & $  c  d  $ \\ 
    $ d $ & $ a  d  $ & $  b  d  $ & $(q^3-q) a  c  +  c  d 
 $ & $  d ^2$ \\ 
    \end{tabular}
    \caption{Multiplication table for $\,\overline{\cdot}\,$ in $\mathrm{SL}_q(2)$.}\label{table:SLq(2)}
\end{table}
\begin{table}[t]\centering
\begin{tabular}{c|cccc}
    $\rightharpoonup$& $ a $ & $ b $ & $ c $ & $ d $\\ \hline
    $ a $ & $  a  $ & $q^{-1}b  $ & $qc $ & $ d $ \\ 
    $ b $ & $ (1-q^{-2}) b  $ & $0 $ & $(q-q^{-1}) d + (q^{-1}-q)a $
 & $(1-q^2)b $ \\ 
    $ c $ & $ 0 $ & $ 0 $ & $0$ & $ 0$  \\ 
    $ d $ & $ a   $ & $  qb  $ & $q^{-1} c  $ & $ d $ \\ 
    \end{tabular}
    \caption{Table for the action $\rightharpoonup$ in $\mathrm{SL}_q(2)$.}\label{table:SLq(2)-action}
\end{table}
The algebra $(H,\,\overline{\cdot}\,,1)$ is generated by $a,b,c,d$ modulo the relations \begin{align*}&c\,\overline{\cdot}\, a = q^2 a\,\overline{\cdot}\, c, && b\,\overline{\cdot}\, d - d\,\overline{\cdot}\, b=(q^2-1)a\,\overline{\cdot}\, b,  \\
& a\,\overline{\cdot}\, b = q^2 b\,\overline{\cdot}\, a,&& b\,\overline{\cdot}\, c - c\,\overline{\cdot}\, b = (q^2-1)a \,\overline{\cdot}\, a - (q^2-1)a\,\overline{\cdot}\, d, \\
& a\,\overline{\cdot}\, d = d\,\overline{\cdot}\, a,&& d\,\overline{\cdot}\, c - c\,\overline{\cdot}\, d = (q^2-1) c\,\overline{\cdot}\, a,  
&\end{align*}
\[ a\,\bar{\cdot}\, d - q^{-2} c\,\bar{\cdot} \,b= 1. \]

We now define the transmuted antipode $\overline{S}(x):=x_{1}\rightharpoonup S(x_{2})$, as in \eqref{S_R}. 
One has
\begin{align*}
    &\overline{S}( a ) = q^{-2} d  + (1-q^{-2}) a , && \overline{S}( b ) = -q^{-2} b  ,\\
    & \overline{S}( c )= -q^{-2}  c , && \overline{S}( d ) =  a .
\end{align*}
Notice that $\overline{S}$ is involutive if and only if $q = \pm 1$. Hence, if $q=-1$, $\overline{S}$ is involutive but $\Rr$ is not cotriangular.
\subsection{An example in the class of Suzuki Hopf algebras} Let $\Bbbk$ be an algebraically closed field of characteristic $\mathrm{char}(\Bbbk)\neq 2$, as in Suzuki \cite{Suzuki}. The \emph{Suzuki Hopf algebra} $A^{\nu,\lambda}_{1,2}$, with parameters $\nu,\lambda\in\Bbbk$, is generated by $a,b,c,d$ subject to the relations \[
a^{2}=d^{2},\ b^{2}=c^{2}, \ a^{2}+\nu b^{2}=1,\ ab=ba=ac=ca=bd=db=cd=dc=0,\ cb=\lambda bc,\ ad=da.
\]
Let \[\mathbf{t}= \begin{pmatrix}a& b\\ c& d\end{pmatrix}, \]with $\mathbf{t}_{ij}$ denoting the entry on the $i$-th row and $j$-th column. The comultiplication, counit and antipode are respectively given by:
\[\Delta \mathbf{t} = \mathbf{t}\ot \mathbf{t},\;\; \epsilon (\mathbf{t}_{ij}) = \delta_{ij},\;\; S (\mathbf{t}_{ij}) = (\mathbf{t}_{ji})^{3}.\]
The map
\[ \Rr_{\alpha, \beta}\begin{pmatrix}
    a\ot a & a\ot b &a\ot c& a\ot d\\
    b\ot a&b\ot b&b\ot c &b\ot d\\
    c\ot a&c\ot b&c\ot c&c\ot d\\
    d\ot a& d\ot b&d\ot c&d\ot d
\end{pmatrix} = \begin{pmatrix}
    0&0&0&0\\
    0&\alpha &\beta&0\\
    0&\beta&\alpha&0\\
    0&0&0&0
\end{pmatrix}\]
defines a coquasitriangular structure whose convolution inverse is $\Rr^{-1}_{(\alpha,\beta)} = \Rr_{(\alpha^{-1},\beta^{-1})}$ (see \cite[proof of Proposition 2.1]{Suzuki}), which is cotriangular if $(\alpha,\beta) = (-1,-1)$ or $(\alpha,\beta)=(1,1)$, and $(\nu,\lambda) = (1,1)$ (see \cite[Proposition 3.10]{Suzuki}).

We define an algebra operation $\,\overline{\cdot}\,$ as in \eqref{cdot_R}, whose multiplication table is reported in Table \ref{table:Suzuki}. In particular, $\,\overline{\cdot}\,$ is commutative. The algebra $(H,\,\overline{\cdot}\,,1)$ is generated by $a,b,c,d$ modulo the relations \begin{align*} &a\,\overline{\cdot}\, a = d \,\overline{\cdot}\, d, \quad c \,\overline{\cdot}\, c = \alpha\beta^{-1}\lambda b \,\overline{\cdot}\, b,\quad c\,\overline{\cdot}\, b = b\,\overline{\cdot}\, c,\quad  a\,\overline{\cdot}\, d = d\,\overline{\cdot}\, a,\\
&a\,\overline{\cdot}\, b=b\,\overline{\cdot}\, a=a\,\overline{\cdot}\, c=c\,\overline{\cdot}\, a=b\,\overline{\cdot}\, d=d\,\overline{\cdot}\, b=c\,\overline{\cdot}\, d=d\,\overline{\cdot}\, c=0.\end{align*}The action $\rightharpoonup$ is reported in Table \ref{table:Suzuki-action}. The corresponding antipode $\overline{S}$, defined as in \eqref{S_R}, is identified by $\overline{S} (a) = \alpha^2\beta^2 d^3$, $\overline{S}(b) = \alpha^3\beta b^3$, $\overline{S}(c) = \alpha^3\beta c^3$, $\overline{S}(d) = \alpha^2\beta^2 a^3$.
\begin{table}[t]\centering
\begin{tabular}{c|cccc}
    $\,\overline{\cdot}\,$& $a$ & $b$ & $c$ & $d$\\ \hline
    $a$ & $\alpha^{-2}\beta^{-2}ad$ & $0$ & $0$ & $\alpha^{-2}\beta^{-2}a^2$ \\ 
    $b$ & $0$ & $\alpha^{-2}\beta^{-2}bc$ & $\alpha^{-3}\beta^{-1}b^2$ & $0$ \\ 
    $c$ & $0$& $\alpha^{-3}\beta^{-1}b^2$ & $ \alpha^{-1}\beta^{-3}\lambda bc$ & $ 0$ \\ 
    $d$ & $\alpha^{-2}\beta^{-2}a^2$ & $0$ & $0 $ & $ \alpha^{-2}\beta^{-2} ad$ \\ 
    \end{tabular}
    \caption{Multiplication table for $\,\overline{\cdot}\,$ in $A_{1,2}^{\nu,\lambda}$.}\label{table:Suzuki}
\end{table}
\begin{table}[t]\centering
\begin{tabular}{c|cccc}
    $\rightharpoonup$& $ a $ & $ b $ & $ c $ & $ d $\\ \hline
    $ a $ & $  d $ & $ \alpha^{-1}\beta c $ & $\alpha\beta^{-1} b $ & $ a $ \\ 
    $ b $ & $ 0 $ & $0 $ & $0 $
 & $0$ \\ 
    $ c $ & $ 0 $ & $ 0 $ & $0$ & $ 0$  \\ 
    $ d $ & $d  $ & $  \alpha\beta^{-1}c  $ & $\alpha^{-1}\beta b $ & $ a $ \\ 
    \end{tabular}
    \caption{Table for the action $\rightharpoonup$ in $A^{\nu,\lambda}_{1,2}$.}\label{table:Suzuki-action}
\end{table}
\vskip 15pt

We know by Theorem \ref{thm:RYDB} that the additional operation \eqref{def-cdot} is braided commutative if $\Rr$ is cotriangular.
In all the previous examples, when $\Rr$ is cotriangular \eqref{def-cdot} is also commutative. We could neither prove nor disprove that this is true in general, and thus the following problem is open:
\begin{problem}\label{problem:commutativity}
    Let $(H,\Rr)$ be a cotriangular Hopf algebra. Is it true that the additional operation \eqref{cdot_R} is commutative?
\end{problem}
Notice that the converse is false (see the example $SL_q(2)$ with $q = -1$). \vskip 15pt

\noindent\textbf{Acknowledgements.} The authors would like to thank A.\@ Ardizzoni for meaningful suggestions throughout all phases of this work, and L.\@ Vendramin and T.\@ Weber for valuable comments. This paper was written while the authors were members of the ``National Group for Algebraic and Geometric Structures and their Applications'' (GNSAGA-INdAM). They were partially supported by the Ministry for University and Research (MUR) within the National Research Project (PRIN 2022) ``Structures for Quivers, Algebras and Representations'' (SQUARE). The first-named author was partially funded by the Vrije Universiteit Brussel Bench Fee for a Joint Doctoral Project, grant number OZR4257, and partially supported through the FWO Senior Research Project G004124N. 
\bibliographystyle{acm}
\bibliography{refs}

\end{document}